\renewcommand{\familydefault}{\sfdefault}
\providecommand{\tabularnewline}{\\}
\numberwithin{equation}{section}
\numberwithin{figure}{section}
\newenvironment{lyxlist}[1]
{\begin{list}{}
{\settowidth{\labelwidth}{#1}
 \setlength{\leftmargin}{\labelwidth}
 \addtolength{\leftmargin}{\labelsep}
 }}
{\end{list}}
\theoremstyle{plain}
\newtheorem{thm}{\protect\theoremname}[section]
  \theoremstyle{plain}
  \newtheorem{algorithm}[thm]{\protect\algorithmname}
  \theoremstyle{remark}
  \newtheorem{rem}[thm]{\protect\remarkname}
  \theoremstyle{plain}
  \newtheorem{prop}[thm]{\protect\propositionname}
  \theoremstyle{plain}
  \newtheorem{cor}[thm]{\protect\corollaryname}
  \theoremstyle{definition}
  \newtheorem{defn}[thm]{\protect\definitionname}
  \theoremstyle{remark}
  \newtheorem*{acknowledgement*}{\protect\acknowledgementname}
\newcommand{\stab}{\quad}
\newcommand{\intr}{\mbox{\rm int}}
\newcommand{\aff}{\mbox{\rm aff}}
\newcommand{\conv}{\mbox{\rm conv}}
\newcommand{\formtab}{\qquad}
  \providecommand{\acknowledgementname}{Acknowledgement}
  \providecommand{\algorithmname}{Algorithm}
  \providecommand{\corollaryname}{Corollary}
  \providecommand{\definitionname}{Definition}
  \providecommand{\propositionname}{Proposition}
  \providecommand{\remarkname}{Remark}
\providecommand{\theoremname}{Theorem}
\begin{document}
\title[Improving distance steps in von Neumann algorithm]{Improving the distance reduction step in the von Neumann algorithm} 

\subjclass[2010]{90C25, 90C20, 90C60, 49N15, 49J53, 52A20}
\begin{abstract}
A known first order method to find a feasible solution to a conic
problem is an adapted von Neumann algorithm. We improve the distance
reduction step there by projecting onto the convex hull of previously
generated points using a primal active set quadratic programming (QP)
algorithm. The convergence theory is improved when the QPs are as
large as possible. For problems in $\mathbb{R}^{2}$, we analyze our
algorithm by epigraphs and the monotonicity of subdifferentials. Logically,
the larger the set to project onto, the better the performance per
iteration, and this is indeed seen in our numerical experiments.
\end{abstract}

\author{C.H. Jeffrey Pang}

\curraddr{Department of Mathematics\\ 
National University of Singapore\\ 
Block S17 08-11\\ 
10 Lower Kent Ridge Road\\ 
Singapore 119076 }

\email{matpchj@nus.edu.sg}

\date{\today{}}

\keywords{von Neumann algorithm, perceptron algorithm, active set, quadratic
programming. }

\maketitle
\tableofcontents{}

\section{Introduction}

For $A\in\mathbb{R}^{m\times n}$, consider the linear inequality
system\begin{subequations}\label{eq:O_p_vN_pair} 
\begin{equation}
A^{T}y>0,\label{eq:perceptron-ineq}
\end{equation}
and its alternative 
\begin{equation}
Ax=0,\mathbf{1}^{T}x=1\mbox{ and }x\geq0,\label{eq:von-Neumann-ineq}
\end{equation}
\end{subequations}where $\mathbf{1}$ stands for the vector of all
ones. More generally, for a closed convex cone $K\subset\mathbb{R}^{n}$
with interior, consider the conic system \begin{subequations}\label{eq:G_p_vN_pair}
\begin{equation}
A^{T}y\in\intr(K^{*}),\label{eq:perceptron-ineq-1}
\end{equation}
and its alternative 
\begin{equation}
Ax=0,\bar{u}^{T}x=1\mbox{ and }x\in K.\label{eq:von-Neumann-ineq-1}
\end{equation}
\end{subequations}where $K^{*}:=\{z:z^{T}x\geq0\mbox{ for all }x\in K\}$
is the (positive) dual cone of $K$ and $\bar{u}$ is some point in
$\intr(K^{*})$. It is easy to see that \eqref{eq:O_p_vN_pair} is
the particular case of \eqref{eq:G_p_vN_pair} when $K=\mathbb{R}_{+}^{n}$
and $\bar{u}=\mathbf{1}$. An easy variant of Farkas's Lemma shows
that exactly one of \eqref{eq:perceptron-ineq} and \eqref{eq:von-Neumann-ineq}
is feasible. A similar result is easily seen to hold for \eqref{eq:G_p_vN_pair}.

The perceptron algorithm is a simple iterative algorithm that finds
a solution to system \eqref{eq:perceptron-ineq} if it is feasible.
The von Neumann algorithm is a simple iterative algorithm that finds
an approximate solution to \eqref{eq:von-Neumann-ineq} if it is feasible,
and it can give a solution $y$ to \eqref{eq:perceptron-ineq} if
\eqref{eq:von-Neumann-ineq} turns out to be infeasible.

We recall some of the history of the perceptron and von Neumann algorithms.
The perceptron algorithm was introduced in \cite{Rosenblatt58} for
solving classification problems in machine learning. The von Neumann
algorithm was privately communicated by von Neumann to Dantzig in
the late 1940s, and later studied by Dantzig \cite{Dantzig92,Dantzig92_conv}.
Block \cite{Block62} and Novikoff \cite{Novikoff62} showed that
when \eqref{eq:perceptron-ineq} is feasible, the perceptron algorithm
finds a solution to \eqref{eq:perceptron-ineq} after at most $1/[\rho(A)]^{2}$
iterations, where $\rho(A)$, a condition number defined in \cite{CheungCucker01_MP},
is defined by 
\[
\rho(A):=\left|\max_{\|y\|_{2}=1}\min_{j=1,\dots,n}\frac{a_{j}^{T}y}{\|a_{j}\|_{2}}\right|.
\]
When \eqref{eq:perceptron-ineq} is feasible, $\rho(A)$ is precisely
the width of the feasibility cone $\{y:A^{T}y\geq0\}$ as defined
in \cite{FreundVera99}. This condition number traces its roots to
\cite{Renegar_SIOPT95} (see also \cite{Renegar_MP95,PenaRenegar00}).
Epelman and Freund \cite{EpelmanFreund00} showed that the von Neumann
algorithm either computes an $\epsilon$-solution to \eqref{eq:von-Neumann-ineq}
in $O(\frac{1}{\rho(A)^{2}}\log(\frac{1}{\epsilon}))$ iterations
when \eqref{eq:von-Neumann-ineq} is feasible, or finds a solution
to the alternative system \eqref{eq:perceptron-ineq} in $O(1/\rho(A)^{2})$
iterations if \eqref{eq:perceptron-ineq} is feasible. They also treated
the generalized pair \eqref{eq:G_p_vN_pair}. (See also \cite{EpelmanFreund02}.)

Consider the problem
\begin{eqnarray}
 & \min_{x\in\mathbb{R}^{n}} & \frac{1}{2}\|Ax\|^{2}\label{eq:primal-QP}\\
 & \mbox{s.t.} & \bar{u}^{T}x=1\nonumber \\
 &  & x\in K.\nonumber 
\end{eqnarray}
It is clear that \eqref{eq:von-Neumann-ineq-1} is feasible if and
only if the objective value of \eqref{eq:primal-QP} is zero. Alternatives
for solving \eqref{eq:G_p_vN_pair} include the interior point algorithm
and the ellipsoid algorithm. Both the interior point and ellipsoid
methods are sophisticated algorithms that give a better complexity
bound, but require significant computational effort to perform each
iteration. 

For the case where $K=\mathbb{R}_{+}^{n}$ and $\bar{u}=\mathbf{1}$,
an active set quadratic programming (QP) algorithm can also be an
alternative. An active set QP algorithm can easily solve \eqref{eq:O_p_vN_pair}
if $m$ and $n$ are small, and the subproblems in each iteration
are easily solved when $m$ is small. For larger problems, an active
set QP algorithm is considered to be as efficient as the simplex method
in practice. An advantage of the active set QP method is that the
minimum of \eqref{eq:primal-QP} can be attained in finitely many
iterations. If the active set QP algorithm is used to find a feasible
solution to \eqref{eq:perceptron-ineq}, the algorithm can terminate
before the minimizer is found.

The perceptron and von Neumann algorithms are a first-order methods
for solving \eqref{eq:O_p_vN_pair} in that the computational effort
in each iteration is small, but one would need much more iterations
than a more sophisticated algorithm like the interior point method
or the ellipsoid method. For large scale problems, a first-order method
may be the only reasonable approach. Since the von Neumann algorithm
uses only matrix vector multiplications and do not solve linear systems,
it is also useful for sparse problems.

Problem \eqref{eq:von-Neumann-ineq} is a particular case of the problem
of finding whether the convex hulls of two sets of points overlap.
More precisely, for $A\in\mathbb{R}^{m\times n_{1}}$ and $B\in\mathbb{R}^{m\times n_{2}}$,
consider 
\begin{eqnarray}
 & \min_{x\in\mathbb{R}^{n_{1}},z\in\mathbb{R}^{n_{2}}} & \frac{1}{2}\|Ax-Bz\|^{2}\label{eq:von-Neumann-gen}\\
 & \mbox{s.t.} & \mathbf{1}^{T}x=1\nonumber \\
 &  & \mathbf{1}^{T}z=1\nonumber \\
 &  & x,z\geq0.\nonumber 
\end{eqnarray}
The sets are the vectors spanned by the columns of $A$ and $B$ respectively.
When $B$ is the zero vector of size $m\times1$, then \eqref{eq:von-Neumann-gen}
reduces to to \eqref{eq:primal-QP}. When the convex hull of these
two sets of points do not overlap, the classification problem is the
problem of finding a good separating hyperplane between these two
sets. Research in the classification problem has gone on to handle
the misclassifications of some of the points \cite{Scheinberg06}.

There are other accelerations of the perceptron and von Neumann algorithms
in the literature. A smoothed perceptron von Neumann algorithm was
studied in \cite{SoheiliPena12}, who were in turn motivated by the
smoothing techniques in \cite{Nesterov05_SIOPT}. A randomized rescaled
version of the perceptron algorithm was proposed in \cite{DunaganVempala06_MP}
that terminates in $O(m\log(\frac{1}{\rho(A)}))$ with high probability.
The randomized algorithm was extended to more general conic systems
in \cite{BelloniFreundVempala09_MOR}.

Another well-known algorithm for solving feasibility problems is the
method of alternating projections. The idea of using a QP as an intermediate
step to accelerate the method of alternating projections was studied
by the author for general feasibility problems in \cite{cut_Pang12,SHDQP,improved_SIP},
though the idea had been studied for particular cases in \cite{Pierra84,BausCombKruk06}
(intersection of an affine space and a halfspace), \cite{G-P98,G-P01}
(under smoothness conditions) and \cite{Fukushima82} (for the convex
inequality problem). For more information, we refer to the references
in the papers mentioned earlier, and highlight \cite{BB96_survey,EsRa11}
as well as \cite[Subsubsection 4.5.4]{BZ05} for more information
about the theory and method of alternating projections. It is natural
to ask whether a QP can accelerate algorithms for solving \eqref{eq:O_p_vN_pair}
and \eqref{eq:G_p_vN_pair}.

\subsection{Contributions of this paper}

We make use of the fact that it is relatively easy to project a point
onto the convex hull of a small number of points using an active set
QP algorithm to generalize the von Neumann algorithm. When the size
of the set that defines the convex hull to be projected on equals
two, then our algorithm becomes the setting of the von Neumann algorithm.
The size of this set can be chosen to be as large as one can reasonably
can to increase efficiency, as long as each iteration is still manageable.
(See lines 7-10 of Algorithm \ref{alg:enhanced-vN}, Remark \ref{rem:vN-particular-case}
and the subsequent discussion.) 

For the case of \eqref{eq:O_p_vN_pair}, the generalized algorithm
(Algorithm \ref{alg:enhanced-vN}) is a variant of an active set QP
algorithm, and it converges to a point in finitely many iterations
to find a $y$ satisfying \eqref{eq:perceptron-ineq} or an $x$ satisfying
\eqref{eq:von-Neumann-ineq}, whichever one is feasible. 

For the case of \eqref{eq:G_p_vN_pair}, Theorem \ref{thm:fin-conv-1}
proves that Algorithm \ref{alg:enhanced-vN} converges to a point
in finitely many iterations if $0\in\intr(S)$, where $S=\intr\{Ap:\bar{u}^{T}p=1,p\in K\}$,
and all previously identified points are kept. For the case when $m=2$
and $0$ is on the boundary of $S$, we show that the convergence
of $\{\|y_{i}\|\}_{i}$ in Algorithm \ref{alg:enhanced-vN} to zero
is linear with rate at worst $1/\sqrt{2}$ in Theorem \ref{thm:lin-conv-best},
and further analyze the behavior of Algorithm \ref{alg:enhanced-vN}
in Section \ref{sec:More-analysis} by appealing to epigraphs and
the monotonicity of subdifferentials.

\subsection{Notation}

We list down some common notation used in this paper, which are rather
standard material in convex analysis \cite{Rockafellar70}. Let $C\subset\mathbb{R}^{n}$
be a set.
\begin{lyxlist}{00.00.0000}
\item [{$\aff(C)$}] The affine hull of the set $C$.
\item [{$\conv(C)$}] The convex hull of the set $C$.
\end{lyxlist}
If $C$ is a closed convex set, we have the following notation.
\begin{lyxlist}{00.00.0000}
\item [{$T_{C}(x)$}] The tangent cone of the set $C$ at $x$.
\item [{$\partial C$}] The boundary of $C$.
\end{lyxlist}
For a convex function $f:\mathbb{R}\to\mathbb{R}$, we have the following
notation.
\begin{lyxlist}{00.00.0000}
\item [{$\partial f(x)$}] The subdifferential of $f$ at $x$, and $\partial f(x)$
is a subset of $\mathbb{R}$.
\end{lyxlist}

\section{Algorithm}

In this section, we propose Algorithm \ref{alg:enhanced-vN} for solving
\eqref{eq:G_p_vN_pair}, and describe the algorithmic issues incrementally.
 We start by describing Algorithm \ref{alg:enhanced-vN}.
\begin{algorithm}
\label{alg:enhanced-vN}(Algorithm for system \eqref{eq:G_p_vN_pair})
For $A\in\mathbb{R}^{m\times n}$ and a closed convex cone $K\subset\mathbb{R}^{n}$
with interior, this algorithm finds either a $y\in\mathbb{R}^{m}$
satisfying \eqref{eq:perceptron-ineq-1} or an $x\in\mathbb{R}^{n}$
satisfying \eqref{eq:von-Neumann-ineq-1}.

01 $\quad$Set $x_{0}=p_{0}\in\{x:\bar{u}^{T}x=1,x\in K\}$, $y_{0}=Ax_{0}$
and $i=0$

02 $\quad$Loop

03 $\quad$$\formtab$Find some $p_{i+1}$ such that $p_{i+1}\in K$,
$\bar{u}^{T}p_{i+1}=1$ and $p_{i+1}^{T}Ay_{i}\leq0$. 

04 $\quad$$\formtab$$\formtab$If $p^{T}A^{T}y_{i}>0$ for all $p\in K$
such that $\bar{u}^{T}p=1$, 

05$\quad$$\formtab$$\formtab$$\formtab$then $A^{T}y_{i}\in\intr(K^{*})$,
solving \eqref{eq:perceptron-ineq-1}, so we exit.

06 $\quad$$\formtab$\textbf{(Distance reduction)}

07 $\quad$$\formtab$Let $C_{i+1}=\{Ac_{i+1,1},\dots,Ac_{i+1,k_{i+1}}\}$
be a finite subset of 

08$\quad$$\formtab$$\formtab$$\conv\{Ap_{0},Ap_{1},\dots,Ap_{i+1}\}$.

09 $\quad$$\formtab$Let $y_{i+1}:=P_{\scriptsize\conv(\tilde{C})}(0)=\sum_{j=1}^{k_{i+1}}\lambda_{j}^{(i+1)}Ac_{i+1,j}$,
where $\tilde{C}\subset C_{i+1}$, 

10$\quad$$\formtab$$\formtab$$\sum_{j=1}^{k_{i+1}}\lambda_{j}^{(i+1)}=1$
and $\lambda_{j}^{(i+1)}\geq0$ for all $j\in\{1,\dots,k_{i+1}\}$.

11 $\quad$$\formtab$Let $x_{i+1}=\sum_{j=1}^{k_{i+1}}\lambda_{j}^{(i+1)}c_{i+1,j}$,
and $i\leftarrow i+1$.

12 $\quad$$\formtab$Perform an \textbf{aggregation step }to reduce
the size of $C_{i}$.

13 $\quad$until $\|y_{i}\|=\|Ax_{i}\|$ small.\end{algorithm}
\begin{rem}
(Choice of $p_{i+1}$) The $p_{i+1}$ in line 3 is typically chosen
by solving the conic section optimization problem 
\begin{eqnarray}
p_{i+1}= & \arg\min_{p} & p^{T}A^{T}y_{i}\label{eq:new-p-i-1-formula}\\
 & \mbox{s.t.} & \bar{u}^{T}p=1\nonumber \\
 &  & p\in K.\nonumber 
\end{eqnarray}
In the case where $\bar{u}=\mathbf{1}$ and $K=\mathbb{R}_{+}^{n}$,
the vector $p_{i+1}$ is easily seen to be the elementary vector $e_{j}$,
where $j$ corresponds to the coordinate of $A^{T}y_{i}$ with the
minimum value. In the case where $K$ is the semidefinite cone $\mathcal{S}_{+}^{k\times k}$
and $\bar{u}$ is the identity matrix in $\mathbb{R}^{k\times k}$,
the optimization objective is now $\langle p,A^{T}y_{i}\rangle$,
where $A^{T}$ is the adjoint of the operator $A:\mathcal{S}_{+}^{k\times k}\to\mathbb{R}^{m}$
and $\langle,\rangle$ corresponds to the trace inner product. A minimizer
of \eqref{eq:new-p-i-1-formula} is easily obtained once the eigenvalue
factorization of $A^{T}y_{i}$ is obtained. If $K$ is the direct
sum of sets of the form $\mathbb{R}_{+}^{n}$ and semidefinite cones,
the problem \eqref{eq:new-p-i-1-formula} can still be easily solved.
More elaboration is given in \cite{EpelmanFreund00} for example.
\end{rem}
We show that Algorithm \ref{alg:enhanced-vN} is an enhancement to
the von Neumann algorithm.
\begin{rem}
\label{rem:vN-particular-case}(von Neumann algorithm) The generalized
von Neumann algorithm to solve \eqref{eq:G_p_vN_pair} in \cite{EpelmanFreund00}
is a particular case of Algorithm \ref{alg:enhanced-vN} when the
$p_{i+1}$ in line 3 is chosen by \eqref{eq:new-p-i-1-formula} and,
most importantly, the set $C_{i+1}$ in line 7 is chosen to be $\{y_{i},Ap_{i+1}\}$.
Choices of $p_{i+1}$ different from \eqref{eq:new-p-i-1-formula}
were explored. See for example \cite{SoheiliPena13}. The classical
von Neumann algorithm is the particular case when $K=\mathbb{R}_{+}^{n}$,
\textbf{$\bar{u}=\mathbf{1}$}, $p_{0}=\frac{1}{n}\mathbf{1}$. When
$K$ is a product of semidefinite cones and cones of the type $\mathbb{R}_{+}^{k}$,
the optimization problem \eqref{eq:new-p-i-1-formula} is easy to
solve. (See for example \cite{EpelmanFreund00}.)
\end{rem}
The key generalization over the von Neumann algorithm in Algorithm
\ref{alg:enhanced-vN} is lines 7 to 10. The set $C_{i}$ is typically
taken to be of size 2, but we notice that it is still easy to project
onto the convex hull of a small number of points using an active set
quadratic programming (QP) algorithm. An active set QP algorithm is
considered to be as efficient as the simplex method in practice, and
we will describe the active set QP algorithm in Algorithm \ref{alg:active-QP}
later. 
\begin{rem}
(The $\tilde{C}$ in line 9) One would ideally choose $\tilde{C}=C_{i+1}$
in line 9 of Algorithm \ref{alg:enhanced-vN}, but it may take prohibitively
many iterations in order for the active set QP algorithm to find $P_{\scriptsize\conv(C_{i+1})}(0)$.
The active set QP algorithm finds $P_{\scriptsize\conv(\tilde{C})}(0)$
for different active sets $\tilde{C}$, ensuring a reduction in $d(0,\conv(\tilde{C}))$
at each iteration. (See Proposition \ref{prop:Facts-active-QP}, in
particular (4) and (5).) If the active set QP algorithm is expected
to take too many iterations before completion, then one can stop earlier
and find a new element to add to $C_{i}$ in line 7 of Algorithm \ref{alg:enhanced-vN}
instead.
\end{rem}
A useful property is the following.
\begin{rem}
\label{rem:finiteness-certificate}(Feasibility certificate) Suppose
\[
|\{\lambda_{j}^{(i)}:\lambda_{j}^{(i)}>0\}|=m+1,
\]
(i.e., there are $m+1$ positive terms in $\{\lambda_{j}^{(i)}\}_{j=1}^{k_{i}}$)
at line 12 of Algorithm \ref{alg:enhanced-vN}. The set $\{Ac_{i,j}:\lambda_{j}^{(i)}>0\}$
then contains $m+1$ points. If in addition, the affine space $\aff\{Ac_{i,j}:\lambda_{j}^{(i)}>0\}$
equals $\mathbb{R}^{m}$, then $0$ lies in $\intr\,\conv\{Ac_{i,j}:\lambda_{j}^{(i)}>0\}$.
This condition can be used as an effective certificate of the feasibility
of \eqref{eq:von-Neumann-ineq-1}. Similar ideas, referred to as bracketing,
were proposed in \cite{Dantzig92}.
\end{rem}

\subsection{\label{sub:agg_strat}Aggregation strategies}

In line 12 of Algorithm \ref{alg:enhanced-vN}, we provided an option
of performing an aggregation step to reduce the size of the set $C_{i}$
so that each iteration can be performed in a reasonable amount of
effort. We now show how this aggregation can be done. Recall that
$C_{i}=\{Ac_{i,1},\dots,Ac_{i,k_{i}}\}$. The iterate $y_{i}$ can
be written as 
\begin{equation}
y_{i}=\sum_{j=1}^{k_{i}}\lambda_{j}^{(i)}Ac_{i,j},\mbox{ where }\sum_{j=1}^{k_{i}}\lambda_{j}^{(i)}=1\mbox{ and }\lambda_{j}^{(i)}\geq0\mbox{ for all }j\in\{1,\dots,k_{i}\}.\label{eq:y-lambda-c}
\end{equation}
In other words, $y_{i}$ is a convex combination of some of the elements
in $C_{i}$. A logical first step to reduce the size of $C_{i}$ is
to discard indices $j$ such that $\lambda_{j}^{(i)}=0$. By Caratheodory's
theorem, the size of the set $C_{i}\subset\mathbb{R}^{m}$ can be
reduced to be at most $m+1$. If $|C_{i}|=m+1$, then it means that
$0$ lies in $\intr\,\conv(C_{i})$, which ends our algorithm (see
Remark \ref{rem:finiteness-certificate}).  If $|C_{i}|$ is still
not small enough so that the iterations of Algorithm \ref{alg:enhanced-vN}
can be easily performed, then we can aggregate to reduce the size
of $C_{i}$ by 1, as described below.
\begin{rem}
\label{rem:agg_strat}(Aggregation procedure 1) If the set $C_{i}=\{Ac_{i,1},\dots,Ac_{i,k_{i}}\}$,
the point $y_{i}$ and the vector $\lambda^{(i)}$ satisfy \eqref{eq:y-lambda-c}
and that $\lambda_{j}^{(i)}>0$ for all $j\in\{1,\dots,k_{i}\}$,
we can reduce the size of the active set $C_{i}$ by one using the
following procedure.
\begin{itemize}
\item Find 2 elements of $C_{i}$, say $\tilde{c}_{k_{i}-1}$ and $\tilde{c}_{k_{i}}$,
using one of the following strategies 

\begin{itemize}
\item The elements $\tilde{c}_{k_{i}-1}$ and $\tilde{c}_{k_{i}}$ are the
oldest elements not to have been aggregated.
\item The coefficients $\lambda_{k_{i}-1}^{(i)}$ and $\lambda_{k_{i}}^{(i)}$
are the smallest.
\item The coefficients $\lambda_{k_{i}-1}^{(i)}$ and $\lambda_{k_{i}}^{(i)}$
are the largest.
\end{itemize}
\item Set $\tilde{c}_{k_{i}-1}\leftarrow\frac{\lambda_{k_{i}-1}^{(i)}}{\lambda_{k_{i}-1}^{(i)}+\lambda_{k_{i}}^{(i)}}\tilde{c}_{k_{i}-1}+\frac{\lambda_{k_{i}}^{(i)}}{\lambda_{k_{i}-1}^{(i)}+\lambda_{k_{i}}^{(i)}}\tilde{c}_{k_{i}}$,
$\lambda_{k_{i-1}}^{(i)}\leftarrow\lambda_{k_{i-1}}^{(i)}+\lambda_{k_{i}}^{(i)}$,
$\lambda_{k_{i}}^{(i)}\leftarrow0$, and $C_{i}\leftarrow C_{i}\backslash\{\tilde{c}_{k_{i}}\}$.
\end{itemize}
\end{rem}
In order to reduce the set $C_{i}$ to a manageable size, one can
perform as many iterations of the procedure in Remark \ref{rem:agg_strat}
as needed to drop more points, or to amend the procedure to drop more
points per iteration.

Consider points $p_{i+1}$ obtained by the optimization procedure
\eqref{eq:new-p-i-1-formula}. The point $Ap_{i+1}$ minimizes $\{y_{i}^{T}v:v=Ap,\bar{u}^{T}p=1,p\in K\}$.
Hence $Ap_{i+1}$ lies on the boundary of the set $S:=\{Ap:\bar{u}^{T}p=1,p\in K\}$.
If we aggregate by taking some weighted average of some of the points
in $\{Ap_{0},\dots,Ap_{i+1}\}$, the new points obtained can lie in
the relative interior of $S$. It may be desirable to keep as many
points on the boundary of $S$ as possible, and we describe a second
aggregation procedure. 
\begin{rem}
\label{rem:agg_strat_2}(Aggregation procedure 2) For the setting
in Remark \ref{rem:agg_strat}, we can consider an alternative aggregation
strategy. 
\begin{itemize}
\item If no vector had been obtained by an aggregation process (i.e., if
there were no vectors of the form $\tilde{c}_{k_{i}-1}$ produced
at the end of Remark \ref{rem:agg_strat}),

\begin{itemize}
\item Perform the procedure in Remark \ref{rem:agg_strat}, but store the
aggregated vector (i.e., the vector $\tilde{c}_{k_{i}-1}$) in $\tilde{c}_{1}$
instead after some change of indices.
\end{itemize}
\item else

\begin{itemize}
\item Choose a vector, say $\tilde{c}_{k_{i}}$, by some criterion (for
example, the ones similar to Remark \ref{rem:agg_strat}), and aggregate
with the steps $\tilde{c}_{1}\leftarrow\frac{\lambda_{1}^{(i)}}{\lambda_{1}^{(i)}+\lambda_{k_{i}}^{(i)}}\tilde{c}_{1}+\frac{\lambda_{k_{i}}^{(i)}}{\lambda_{k_{i}-1}^{(i)}+\lambda_{k_{i}}^{(i)}}\tilde{c}_{k_{i}}$,
$\lambda_{1}^{(i)}\leftarrow\lambda_{1}^{(i)}+\lambda_{k_{i}}^{(i)}$,
$\lambda_{k_{i}}^{(i)}\leftarrow0$, and $C_{i}\leftarrow C_{i}\backslash\{\tilde{c}_{k_{i}}\}$.
\end{itemize}
\end{itemize}
\end{rem}

\subsection{Primal active set quadratic programming}

We now discuss the primal active set quadratic programming algorithm
for performing the projection $y_{i+1}=P_{\scriptsize\conv(C_{i+1})}(0)$
in line 9 of Algorithm \ref{alg:enhanced-vN}.
\begin{algorithm}
\label{alg:active-QP}(Active set QP algorithm for $y=P_{\scriptsize\conv(C)}(0)$)
Let $C=\{c_{1},\dots,c_{k}\}$ be a finite set, as in the setting
of line 9 in Algorithm \ref{alg:enhanced-vN}. We give the full details
of how to evaluate $y=P_{\scriptsize\conv(C)}(0)$, as well to find
the multipliers $\lambda\in\mathbb{R}^{k}$ such that $y=\sum_{j=1}^{k}\lambda_{j}c_{j}$,
$\sum_{j=1}^{k}\lambda_{j}=1$. 

01$\stab$Find $j_{0}^{*}\in\{1,\dots,k\}$.

02$\stab$Set $\lambda^{(0)}=e_{j_{0}^{*}}$, $J_{0}=\{j_{0}^{*}\}$,
$\tilde{C}_{0}=\{c_{j_{0}^{*}}\}$ and $\tilde{y}_{0}=c_{j_{0}^{*}}$. 

03$\stab$Set $i=1$.

04$\stab$Begin outer loop \textbf{{[}Find entering index{]}}

05$\stab$$\stab$Find an index $j_{i}^{*}$ such that $\tilde{y}_{i-1}^{T}c_{j_{i}^{*}}<\tilde{y}_{i-1}^{T}c$
for all (or for any) $c\in\tilde{C}_{i-1}$.

06$\stab$$\stab$$\stab$If such $j_{i}^{*}$ does not exist, then
$\tilde{y}_{i-1}=P_{\scriptsize\conv(C)}(0)$, and we end.

07$\stab$$\stab$Set $\tilde{C}=\tilde{C}_{i-1}$, $\tilde{J}=J_{i-1}$,
$\tilde{y}=\tilde{y}_{i-1}$, $\tilde{\lambda}=\lambda^{(i-1)}$.

08$\stab$$\stab$Begin inner loop \textbf{{[}Tries to add $j_{i}^{*}$
to active set{]}}

09$\stab$$\stab$$\stab$Find $y^{\prime}=P_{\scriptsize\aff(\tilde{C}\cup\{c_{j_{i}^{*}}\})}(0)$,
and write $y^{\prime}=[\sum_{j\in\tilde{J}}\lambda_{j}^{\prime}c_{j}]+\lambda_{j_{i}^{*}}^{\prime}c_{j_{i}^{*}}$ 

10$\stab$$\stab$$\stab$$\stab$for some $\lambda^{\prime}\in\mathbb{R}^{k}$
such that $\mathbf{1}^{T}\lambda^{\prime}=1$ and $\lambda_{j}^{\prime}=0$
if $j\notin[\tilde{J}\cup\{j_{i}^{*}\}]$.

11$\stab$$\stab$$\stab$Find the largest $\bar{t}\in(0,1]$ such
that $\tilde{\lambda}+\bar{t}[\lambda^{\prime}-\tilde{\lambda}]\geq0$

12$\stab$$\stab$$\stab$If $\bar{t}=1$, then \textbf{{[}Found better
active set{]}}

13$\stab$$\stab$$\stab$$\stab$Set $\tilde{J}\leftarrow\tilde{J}\cup\{j_{i}^{*}\}$,
$J_{i}\leftarrow\{j\in\tilde{J}:\lambda_{j}^{(i)}>0\}$, $\tilde{C}_{i}\leftarrow\{c_{j}\}_{j\in J_{i}}$. 

14$\stab$$\stab$$\stab$$\stab$Set $\lambda^{(i)}=\lambda^{\prime}$,
$\tilde{y}_{i}=y^{\prime}$ and exit inner loop

15$\stab$$\stab$$\stab$else\textbf{ {[}Remove non-active vertex{]}}

16$\stab$$\stab$$\stab$$\stab$At least one component of $\tilde{\lambda}+\bar{t}[\lambda^{\prime}-\tilde{\lambda}]$
equals zero, say $\tilde{j}$.

17$\stab$$\stab$$\stab$$\stab$Set $\tilde{y}\leftarrow\tilde{y}+\bar{t}[y^{\prime}-\tilde{y}]$,
$\tilde{\lambda}\leftarrow\tilde{\lambda}+\bar{t}[\lambda^{\prime}-\tilde{\lambda}]$
and $\tilde{J}\leftarrow\tilde{J}\backslash\{\tilde{j}\}$.

18$\stab$$\stab$$\stab$end

19$\stab$$\stab$$i\leftarrow i+1$

20$\stab$$\stab$end inner loop

21$\stab$end outer loop
\end{algorithm}
Some explanation of Algorithm \ref{alg:active-QP} is in order. The
following proposition collects some facts, some of which are analogues
of well known facts of active set quadratic programming algorithms.
\begin{prop}
\label{prop:Facts-active-QP}(Facts of Algorithm \ref{alg:active-QP})
For Algorithm \ref{alg:active-QP}, we have the following facts. At
each iteration $i$, 
\begin{enumerate}
\item $\lambda^{(i)}\in\mathbb{R}_{+}^{n}$, $\mathbf{1}^{T}\lambda^{(i)}=1$,
and $\tilde{y}_{i}=\sum_{j=1}^{k}\lambda_{j}^{(i)}c_{j}$.
\item $\lambda_{j}^{(i)}>0$ if and only if $j\in J_{i}$.
\item $\tilde{y}_{i}=P_{\scriptsize\conv(\tilde{C}_{i})}(0)$ and $\tilde{y}_{i}=\sum_{j=1}^{k}\lambda_{j}^{(i)}c_{j}=\sum_{j\in\tilde{J}_{i}}\lambda_{j}^{(i)}c_{j}$.
\item $\{j_{i+1}^{*}\}\subset J_{i+1}\subset J_{i}\cup\{j_{i+1}^{*}\}$.
\item {[}Improvement{]} $\|\tilde{y}_{i+1}\|<\|\tilde{y}_{i}\|$ unless
$\tilde{y}_{i}=P_{\scriptsize\conv(C)}(0)$. \\
(Note that $\|\tilde{y}_{i}\|=d(0,\conv(\tilde{C}_{i}))$.)
\item {[}Finite convergence{]} There is some $i^{*}$ such that $\tilde{y}_{i^{*}}=P_{\scriptsize\conv(C)}(0)$. 
\end{enumerate}
\end{prop}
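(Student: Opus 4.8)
The plan is to prove the six facts in their natural dependency order: establish the invariants (1)--(3) by induction on the outer iteration index $i$, then deduce the structural inclusion (4) and the strict improvement (5) from them, and finally obtain finite termination (6) by a counting argument. Throughout I would adopt the implicit convention that $\tilde C$ always denotes $\{c_j:j\in\tilde J\}$, so that a change to $\tilde J$ in line 17 changes $\tilde C$ accordingly; this is the only notational point in the informal description of Algorithm \ref{alg:active-QP} that must be fixed at the outset. The base case $i=0$ of (1)--(3) is immediate from line 02, since $\lambda^{(0)}=e_{j_0^*}$ is nonnegative and sums to one, its positive support is exactly $J_0$, and $\tilde y_0=c_{j_0^*}=P_{\conv(\tilde C_0)}(0)$ because $\conv(\tilde C_0)$ is a single point.

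For the inductive step I would track the inner loop of lines 08--20. The key bookkeeping observation is that $\tilde\lambda$ remains a convex combination: it starts equal to $\lambda^{(i-1)}$, and each update $\tilde\lambda\leftarrow\tilde\lambda+\bar t(\lambda'-\tilde\lambda)$ with $\bar t\in(0,1]$ preserves $\mathbf 1^T\tilde\lambda=1$ (both $\tilde\lambda$ and $\lambda'$ sum to one) and preserves $\tilde\lambda\ge 0$ precisely because $\bar t$ is chosen maximal subject to nonnegativity in line 11. Hence on exit with $\bar t=1$ we have $\lambda^{(i)}=\lambda'\ge 0$ and $\mathbf 1^T\lambda^{(i)}=1$, which is (1); since $\lambda'$ is supported on $\tilde J\cup\{j_i^*\}$ and $J_i$ is defined in line 13 to be exactly its positive support, (2) follows. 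The substantive part is (3), that the \emph{affine}-hull projection coincides with the \emph{convex}-hull projection onto the retained vertices. Here I would use the elementary identity that $y'=P_{\aff(D)}(0)$ satisfies $\langle y',c_j\rangle=\|y'\|^2$ for every defining point $c_j$ (the residual $-y'$ is orthogonal to the parallel subspace, and $y'$ is an affine combination of the $c_j$). Applying this with $D=\tilde C\cup\{c_{j_i^*}\}$ gives $\langle\tilde y_i,c_j\rangle=\|\tilde y_i\|^2$ for all $j\in J_i$; since $\tilde y_i=\sum_{j\in J_i}\lambda_j^{(i)}c_j$ with all $\lambda_j^{(i)}>0$, we get $\langle\tilde y_i,w-\tilde y_i\rangle=0\ge 0$ for every $w\in\conv(\tilde C_i)$, which is exactly the variational characterization of $P_{\conv(\tilde C_i)}(0)$.

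Next I would prove (5), from which (4) drops out. When line 06 does not terminate, the entering index satisfies $\langle\tilde y_i,c_{j_{i+1}^*}\rangle<\langle\tilde y_i,c\rangle$ for $c\in\tilde C_i$, while the constant-inner-product identity from (3) gives $\langle\tilde y_i,c_j\rangle=\|\tilde y_i\|^2$ for $j\in J_i$; hence $\langle\tilde y_i,c_{j_{i+1}^*}\rangle<\|\tilde y_i\|^2$. This forces $\tilde y_i$ to not be the minimum-norm point of $\aff(\tilde C_i\cup\{c_{j_{i+1}^*}\})$, so the first affine projection $y'$ of line 09 satisfies $\|y'\|<\|\tilde y_i\|$, and since $y'$ is the foot of the perpendicular from $0$ to the line through $\tilde y_i$ and $y'$, moving from $\tilde y_i$ toward $y'$ by any $\bar t\in(0,1]$ strictly decreases $\|\cdot\|$; every later inner step only moves toward a current affine minimizer from a point still lying in the corresponding affine hull, so it cannot increase the norm, giving $\|\tilde y_{i+1}\|<\|\tilde y_i\|$. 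For (4), the inclusion $J_{i+1}\subset J_i\cup\{j_{i+1}^*\}$ is immediate from the support of $\lambda^{(i+1)}$; and if $j_{i+1}^*\notin J_{i+1}$ then $\tilde C_{i+1}\subseteq\tilde C_i$, forcing $\tilde y_{i+1}\in\conv(\tilde C_i)$ and $\|\tilde y_{i+1}\|\ge\|\tilde y_i\|$, contradicting (5). Finally (6) is a counting argument: by (3) each $\tilde y_i$ is determined by the subset $J_i\subseteq\{1,\dots,k\}$, of which there are finitely many, while (5) makes $\{\|\tilde y_i\|\}$ strictly decreasing; hence no active set recurs and the outer loop must exit through line 06, at which point $\tilde y_{i-1}=P_{\conv(C)}(0)$.

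I expect the main obstacle to be pinning down the inner-loop analysis rather than the individual variational arguments. Three points need care: that the inner loop terminates (each execution of the else branch in lines 15--17 removes one index from $\tilde J$, so it cannot repeat indefinitely, and it ends with $\bar t=1$ once the retained set is small enough); that the current iterate $\tilde y$ genuinely remains in the affine hull being projected onto after a vertex is dropped, so that ``moving toward the affine minimizer does not increase the norm'' stays valid; and that the entering vertex $c_{j_{i+1}^*}$ really receives strictly positive weight $\lambda'_{j_{i+1}^*}>0$ in line 09, which I would verify by an orthogonal decomposition relative to $\aff(\tilde C_i)$ showing the optimal step in the new direction is $r^*=-\langle\tilde y_i,c_{j_{i+1}^*}-\tilde y_i\rangle/\|u_\perp\|^2>0$, where $u_\perp$ is the component of $c_{j_{i+1}^*}-\tilde y_i$ orthogonal to the subspace parallel to $\aff(\tilde C_i)$. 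This positivity is what guarantees the step length $\bar t$ in line 11 is well defined and strictly positive, closing the loop on the above arguments.
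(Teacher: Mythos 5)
Your proposal is correct and follows essentially the same route as the paper's (quite terse) discussion: invariants (1)--(3) by induction via the variational characterization of projections, strict descent for (5), and finiteness of the possible active sets for (6). The one place you improve on the paper is property (5): the paper asserts that the inner loop returns the exact projection onto $\conv(\tilde C_i\cup\{c_{j_{i+1}^*}\})$, whereas you use only the safer and sufficient facts that the norm decreases weakly at every inner step and strictly at the first, together with the verification that $\lambda'_{j_{i+1}^*}>0$ (so that $\bar t>0$), which is precisely the point the paper leaves implicit.
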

For the formula $\tilde{y}_{i-1}^{T}c_{j_{i}}<\tilde{y}_{i-1}^{T}c$
for all $c\in\tilde{C}_{i-1}$ in line 5 of Algorithm \ref{alg:active-QP},
we note that since $\tilde{y}_{i-1}=P_{\tilde{C}_{i-1}}(0)$, 
\begin{equation}
\tilde{y}_{i-1}^{T}c_{j_{1}}=\tilde{y}_{i-1}^{T}c_{j_{2}}\mbox{ for all }j_{1},j_{2}\in J_{i-1}.\label{eq:y-c-any-index}
\end{equation}
Thus the ``for all'' there is equivalent to ``for any''. 

Algorithm \ref{alg:active-QP} works in the following manner. Property
(1) is clear. In view of property (2), $J_{i}$ is also the active
set. At each iteration, the pair $(\tilde{y}_{i},J_{i})$ satisfies
properties (2) and (3). Take any $\tilde{c}\in\tilde{C}_{i}$. If
$\tilde{y}_{i}^{T}c\geq\tilde{y}_{i}^{T}\tilde{c}$ for all $c\in C$,
then in view of \eqref{eq:y-c-any-index}, we can deduce (with a bit
of effort) that $\tilde{y}_{i}=P_{\scriptsize\conv(C)}(0)$. Otherwise,
we can find an index $j_{i+1}^{*}$ in line 5. For the next pair $(\tilde{y}_{i+1},J_{i+1})$,
we find $\tilde{y}_{i+1}=P_{\scriptsize\conv(\tilde{C}_{i}\cup\{c_{j_{i+1}^{*}}\})}(0)$,
but $\tilde{C}_{i}\cup\{c_{j_{i+1}^{*}}\}$ is not necessarily the
active set satisfying property (2). The removal of elements not in
the active set (through checking the sign of $\lambda$ in the inner
loop) gives us $J_{i+1}$ satisfying properties (2) and (4). Since
\[
\|\tilde{y}_{i+1}\|=d\big(0,\conv(\tilde{C}_{i}\cup\{c_{j_{i+1}^{*}}\})\big)<d\big(0,\conv(\tilde{C}_{i})\big)=\|\tilde{y}_{i}\|,
\]
property (5) is satisfied. Property (6) follows from property (5)
and the fact that the active set $J_{i}$ can take on only finitely
many possibilities.
\begin{rem}
(von Neumann algorithm via Algorithm \ref{alg:active-QP}) We show
how we can use Algorithm \ref{alg:active-QP} directly to solve the
system \eqref{eq:O_p_vN_pair} through \eqref{eq:primal-QP}. Let
$C=\{a_{1},\dots,a_{n}\}$ be the columns of $A$. It is clear that
$0$ lies in the convex hull of $C$ if and only if \eqref{eq:von-Neumann-ineq}
has a solution. The difference between using Algorithm \ref{alg:enhanced-vN}
for \eqref{eq:O_p_vN_pair} and using Algorithm \ref{alg:active-QP}
directly are 
\begin{enumerate}
\item Algorithm \ref{alg:enhanced-vN} can stop when it finds a $y$ such
that $A^{T}y>0$, whereas Algorithm \ref{alg:active-QP} as stated
would stop only at $P_{\scriptsize\conv(C)}(0)$.
\item Algorithm \ref{alg:enhanced-vN} allows for aggregation to reduce
the size of $C$ but not Algorithm \ref{alg:active-QP}.
\end{enumerate}
If no aggregation is performed in Algorithm \ref{alg:enhanced-vN},
then finite convergence follows from the fact that the active set
can take on finitely many possibilities and (5) of Proposition \ref{prop:Facts-active-QP}.
\end{rem}
We remark on our choice of the QP algorithm.
\begin{rem}
(Choice of QP algorithm) A QP can be solved by an interior point method
or by a dual active set QP algorithm \cite{Goldfarb_Idnani}. We believe
that our choice of a QP algorithm is most appropriate because of the
following consequence of Proposition \ref{prop:Facts-active-QP}(5):
If we expect that we still need many iterations to solve the QP, we
can abort the QP solver halfway and the iterate $\tilde{y}_{i}$ obtained
so far would be closer to $0$ than what we started with. We might
not be able to get such an improved iterate if other QP solvers were
used.
\end{rem}
We remark on how one can speed up the implementation of Algorithm
\ref{alg:active-QP}.
\begin{rem}
\label{rem:accelerate-primal-QP}(On projecting onto affine spaces
in Algorithm \ref{alg:active-QP}) Recall that in line 9 of Algorithm
\ref{alg:active-QP}, we need an algorithm to find $y^{\prime}=P_{\scriptsize\aff(C)}(0)$,
where $C$ is a set of points $\{c_{1},\dots,c_{k}\}$. Finding this
projection is equivalent to finding $\gamma\in\mathbb{R}^{k-1}$ such
that 
\[
y^{\prime}=c_{k}+\sum_{j=1}^{k-1}\gamma_{j}[c_{j}-c_{k}]\mbox{ and }y^{\prime}\perp[c_{j}-c_{k}]\mbox{ for all }j\in\{1,\dots,k-1\}.
\]
Let $\tilde{A}\in\mathbb{R}^{m\times(k-1)}$ be such that the $j$th
column is $[c_{j}-c_{k}]$ have the QR factorization $A=QR$. Then
one can easily figure that $\gamma=-R^{-1}Q^{T}c_{k}$. The bottleneck
in implementing Algorithm \ref{alg:active-QP} is thus to calculate
the QR factorization of matrices of the form $\tilde{A}$. One need
not calculate these QR factorizations from scratch, and can update
these QR factorizations whenever new columns are added or removed
using Given's rotations or Householder reflections. We refer the reader
to \cite{NW06} and the references therein for more details.
\end{rem}
More intuition is given in Figure \ref{fig:sample-runs-QP-alg},
where we show a sample run of Algorithm \ref{alg:active-QP} (or Algorithm
\ref{alg:enhanced-vN}) and the von Neumann algorithm. For this example
where $A\in\mathbb{R}^{2\times3}$, the von Neumann Algorithm takes
many iterations before it can certify the infeasibility of the QP,
while Algorithm \ref{alg:active-QP} finds the projection of $0$
onto $\conv\{a_{1},a_{2},a_{3}\}$ in two steps.

\begin{figure}[!h]
\begin{tabular}{|c|c|c|}
\hline 
\multicolumn{3}{|l|}{von Neumann Algorithm}\tabularnewline
\hline 
\hline 
\includegraphics[scale=0.2]{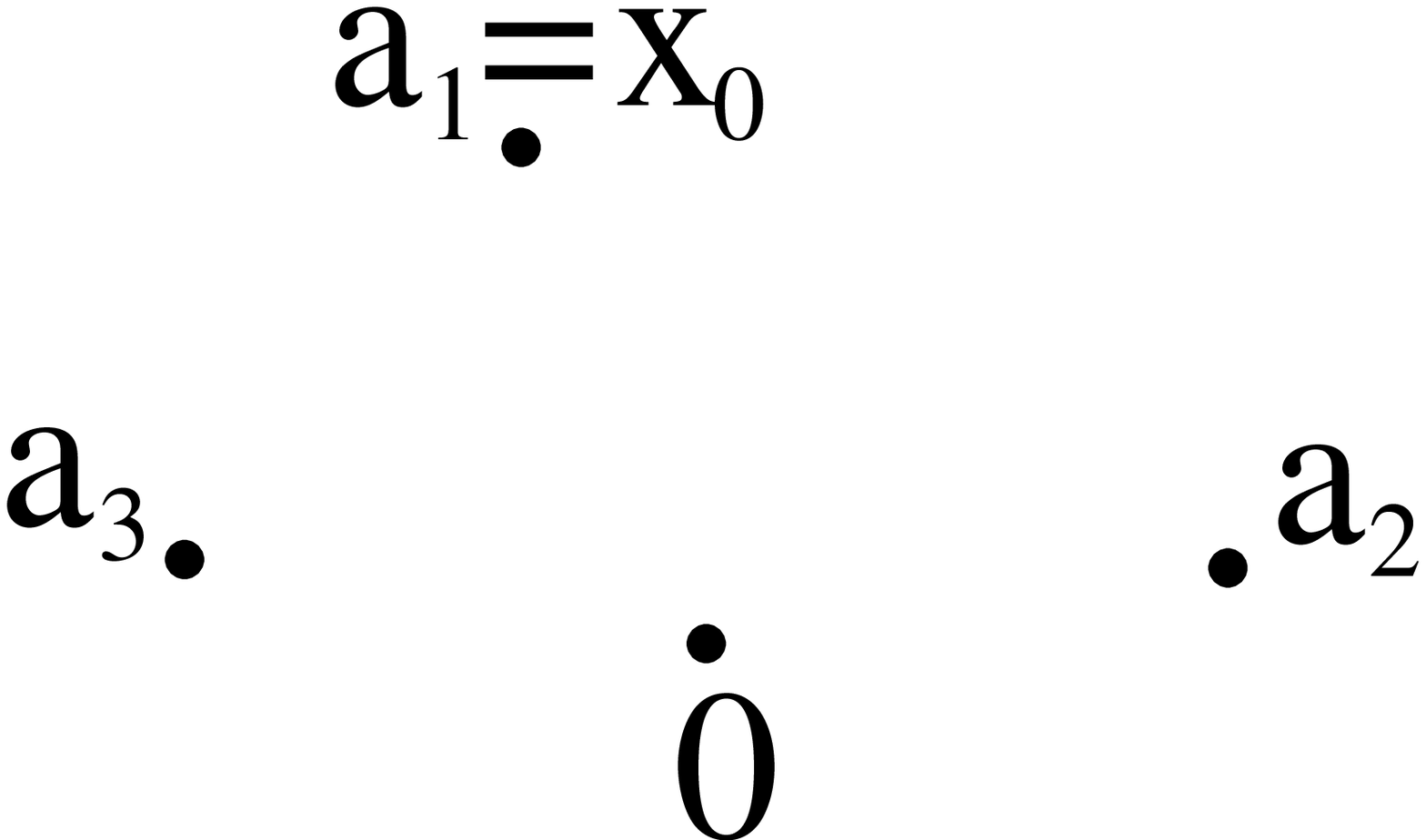} & \includegraphics[scale=0.2]{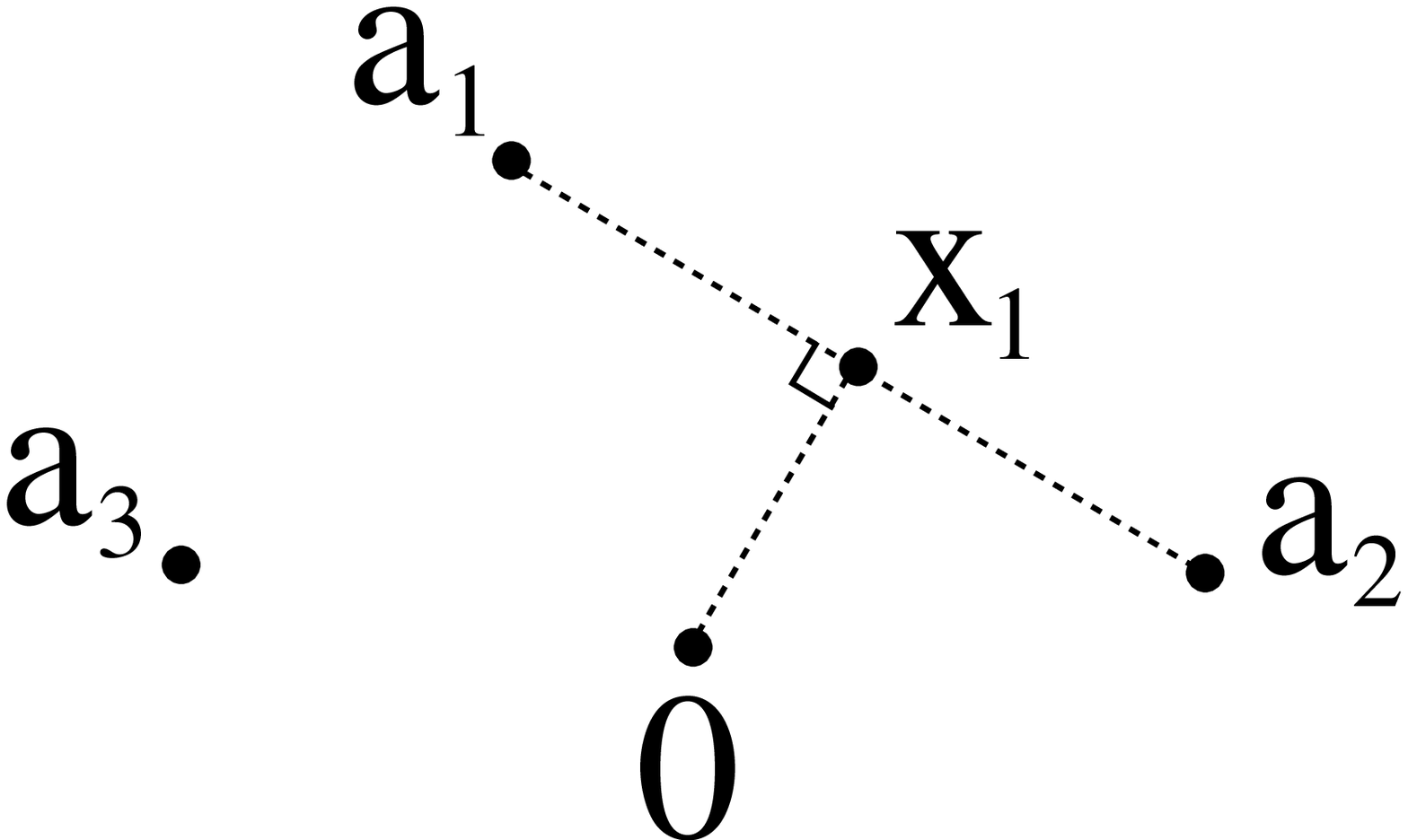} & \includegraphics[scale=0.2]{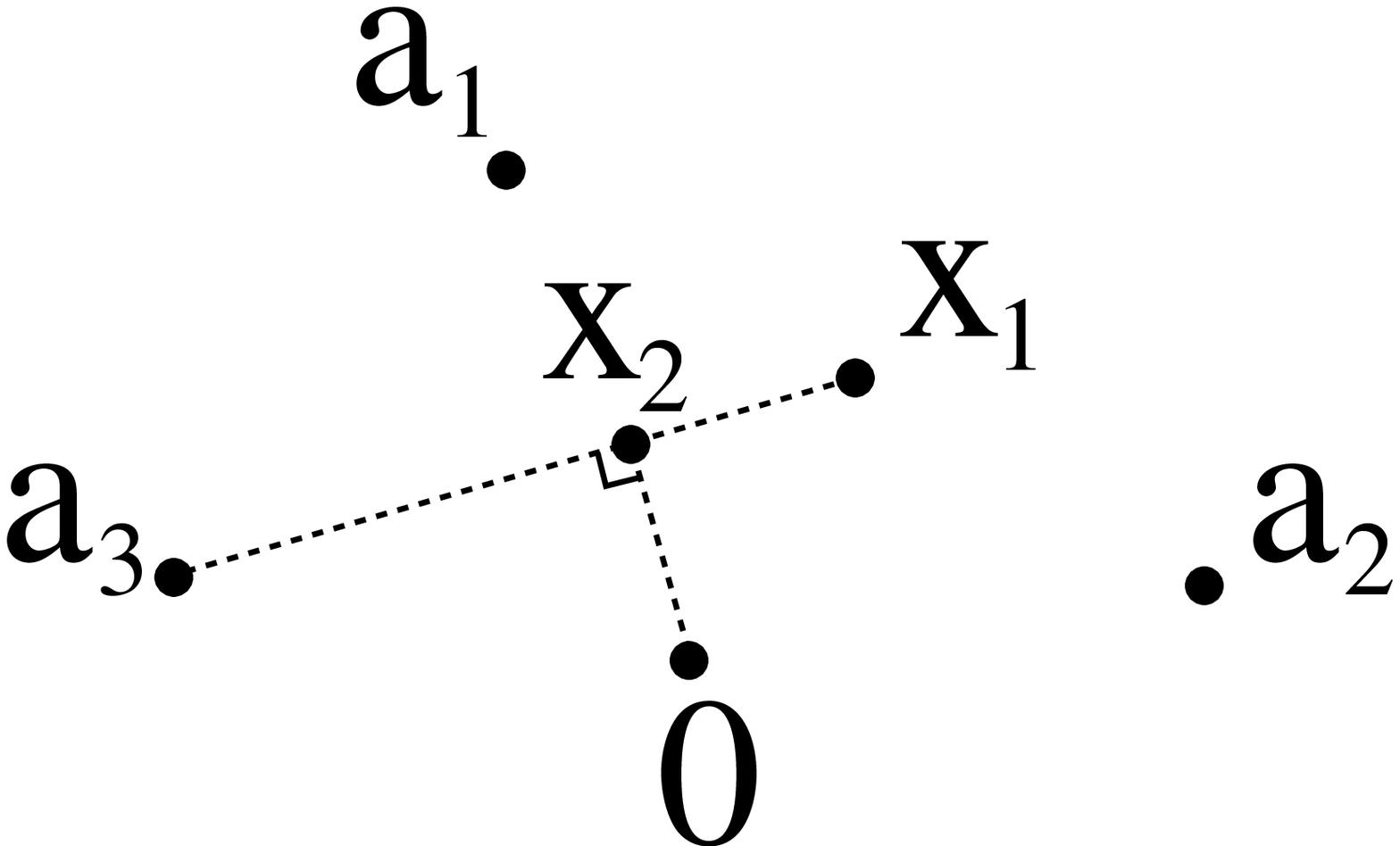}\tabularnewline
\hline 
\multicolumn{3}{c}{}\tabularnewline
\hline 
\multicolumn{3}{|l|}{Algorithm \ref{alg:enhanced-vN}}\tabularnewline
\hline 
\hline 
\includegraphics[scale=0.2]{vNm_01} & \includegraphics[scale=0.2]{vNm-02} & \includegraphics[scale=0.2]{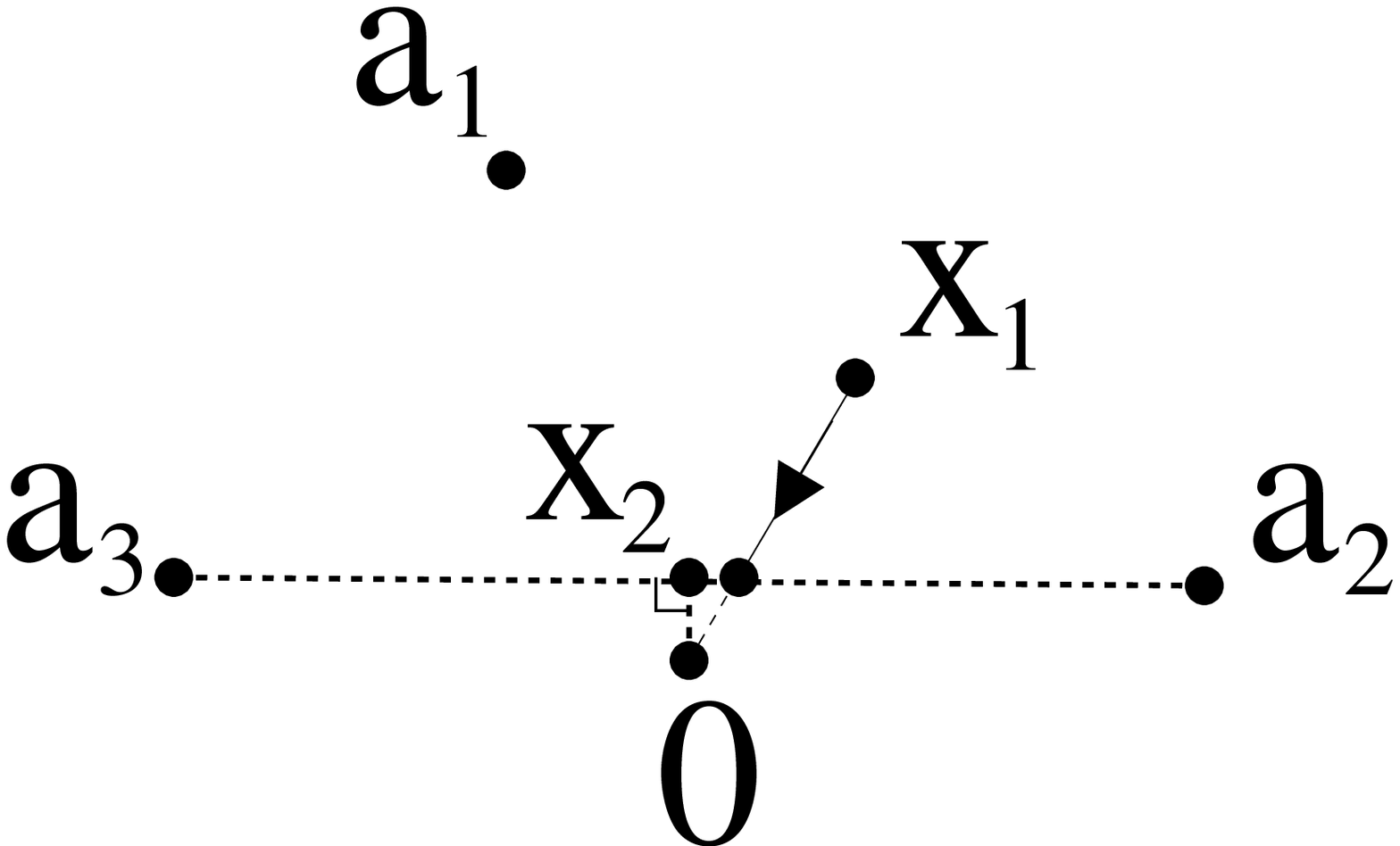}\tabularnewline
\hline 
\end{tabular}

\caption{\label{fig:sample-runs-QP-alg}The diagram on the top shows iterations
of the von Neumann algorithm, while the diagram on the bottom shows
iterations of Algorithm \ref{alg:enhanced-vN}. To find $x_{2}$ in
Algorithm \ref{alg:enhanced-vN}, the point $a_{3}$ is identified
in line 3. The algorithm projects onto $\conv\{a_{1},a_{2},a_{3}\}$
by moving from $x_{1}$ onto the line segment $[a_{2},a_{3}]$, then
moving along the line segment to get $x_{2}$. }
\end{figure}

We generalize Algorithm \ref{alg:enhanced-vN} to handle \eqref{eq:von-Neumann-gen}
in Algorithm \ref{alg:extended-primal-QP} below. A similar approach
was attempted in \cite{Rujan93} using the dual active set QP algorithm
\cite{Goldfarb_Idnani}. A dual quadratic programming algorithm is
considered to be better for general QP problems because there is no
need to find a feasible starting point. Since a feasible point to
\eqref{eq:von-Neumann-gen} is readily available, the primal approach
is not disadvantaged. More importantly, we feel that the primal QP
approach is better for \eqref{eq:von-Neumann-gen} because it works
with vectors in $\mathbb{R}^{m}$, whereas the dual approach works
with vectors in $\mathbb{R}^{n_{1}+n_{2}}$, and we expect $m\ll n_{1}+n_{2}$.
It is unclear whether this generalization is original or not, but
we feel that it is worthwhile to make a connection. Algorithm \ref{alg:extended-primal-QP}
can be pieced from the general structure of a primal active set QP
algorithm, and is similar to Algorithm \ref{alg:enhanced-vN}. Furthermore,
we will not elaborate Algorithm \ref{alg:extended-primal-QP}, nor
will the rest of this paper depend on Algorithm \ref{alg:extended-primal-QP},
so we shall be brief. 
\begin{algorithm}
\label{alg:extended-primal-QP}(Active set QP algorithm for \eqref{eq:von-Neumann-gen})
For $A\in\mathbb{R}^{m\times n_{1}}$ and $B\in\mathbb{R}^{m\times n_{2}}$,
this algorithm finds either a feasible pair $(x,z)$ of \eqref{eq:von-Neumann-gen}
satisfying $Ax=Bz$, or a $y\in\mathbb{R}^{m}$ such that 
\begin{equation}
\min_{j\in\{1,\dots,n_{1}\}}a_{j}^{T}y>\max_{k\in\{1,\dots,n_{2}\}}b_{k}^{T}y,\label{eq:separating-hyperplane}
\end{equation}
where $\{a_{j}\}_{j=1}^{n_{1}}$ are the columns of $A$ and $\{b_{k}\}_{k=1}^{n_{2}}$
are the columns of $B$. 

Choose $j\in\{1,\dots,n_{1}\}$, $k\in\{1,\dots,n_{2}\}$.

Set $J=\{j\}$, $K=\{k\}$ $x_{0}=e_{j}$, $y_{0}=Ax_{0}-Bz_{0}$
and $i=0$

Loop

$\stab$Find either some $j^{*}\in\{1,\dots,n_{1}\}$ such that $a_{j^{*}}^{T}y_{i}\leq\beta:=\max_{k\in K}b_{k}^{T}y_{i}$,

$\stab$$\stab$or some $k^{*}\in\{1,\dots,n_{2}\}$ such that $b_{k^{*}}^{T}y_{i}\geq\alpha:=\min_{j\in J}a_{j}^{T}y_{i}$. 

$\stab$If no such $j^{*}$ or $k^{*}$ exists, then $y_{i}$ solves
\eqref{eq:separating-hyperplane}, and we exit.

$\stab$\textbf{(Distance reduction loop)}

$\stab$Loop

$\stab$$\stab$If $j^{*}$ was found earlier

$\stab$$\stab$$\stab$Find closest points between 

$\stab$$\stab$$\stab$$\stab$$S_{1}:=\aff(\{a_{j}:j\in J\cup\{j^{*}\}\})$
and $S_{2}:=\aff(\{b_{k}:k\in K\})$, 

$\stab$$\stab$$\stab$$\stab$$\stab$say $s_{1}\in S_{1}$ and
$s_{2}\in S_{2}$. 

$\stab$$\stab$$\stab$$\stab$Write $s_{1}$ as $Ad_{1}$ and $s_{2}$
as $Bd_{2}$.

$\stab$$\stab$$\stab$Let
\begin{eqnarray*}
t_{1} & = & \min\left\{ \frac{(x_{i})_{j}}{-(d_{1}-x_{i})_{j}}:j\in J,(d_{1}-x_{i})_{j}<0\right\} \\
t_{2} & = & \min\left\{ \frac{(z_{i})_{k}}{-(d_{2}-z_{i})_{k}}:k\in K,(d_{2}-z_{i})_{k}<0\right\} \\
t & = & \min\{t_{1},t_{2},1\}
\end{eqnarray*}

$\stab$$\stab$$\stab$If $t=1$, then take $x_{i+1}\leftarrow d_{1}$
and $z_{i+1}\leftarrow d_{2}$, 

$\stab$$\stab$$\stab$$\stab$set $J\leftarrow J\cup\{j^{*}\}$
and exit loop.

$\stab$$\stab$$\stab$If $t\in(0,1)$, then $x_{i}\leftarrow x_{i}+t(d_{1}-x_{i})$
and $z_{i}\leftarrow z_{i}+t(d_{2}-z_{i})$, 

$\stab$$\stab$$\stab$$\stab$and drop the appropriate element in
either $j^{\prime}\in J$ or $k^{\prime}\in K$ 

$\stab$$\stab$$\stab$$\stab$such that $[x_{i}+t(d_{1}-x_{i})]_{j^{\prime}}=0$
or $[z_{i}+t(d_{2}-z_{i})]_{k^{\prime}}=0$.

$\stab$$\stab$end if

$\stab$$\stab$(The case where $k^{*}$ was found instead is similar)

$\stab$end loop.

$\stab$Set $y_{i}=Ax_{i}-Bz_{i}$

until $\|y_{i}\|$ small.
\end{algorithm}

\section{\label{sec:basic-analysis}Analysis of Algorithm \ref{alg:enhanced-vN}}

In this section, we prove some results of Algorithm \ref{alg:enhanced-vN}.
Theorem \ref{thm:fin-conv-1} gives conditions under which Algorithm
\ref{alg:enhanced-vN} terminates in finitely many iterations when
$0\in\intr(S)$, where 
\begin{equation}
S:=\{Ap:\bar{u}^{T}p=1,p\in K\}.\label{eq:set-S}
\end{equation}
We treat the case when $0$ lies in the boundary of $S$ and $S\subset\mathbb{R}^{2}$
in Subsection \ref{sub:bdry-case}, and show in Theorem \ref{thm:lin-conv-best}
that in such a case, we can expect linear convergence of $\{\|y_{i}\|\}_{i}$
to zero with a rate of at worst $1/\sqrt{2}$. 

We recall the convergence rates of the generalized von Neumann algorithm
for \eqref{eq:G_p_vN_pair}. 
\begin{rem}
(Convergence results from \cite{EpelmanFreund00}) The $\{\|y_{i}\|\}_{i}$
for the generalized von Neumann Algorithm for \eqref{eq:G_p_vN_pair}
is shown to be at worst linear when $0\in\intr(S)$, and at worst
sublinear with rate $O(\frac{1}{\sqrt{i}})$ when $0\in\partial S$
in \cite{EpelmanFreund00}. The second result can also be traced back
to \cite{Dantzig92_conv}. These results can be extended by copying
the proofs almost word for word for Algorithm \ref{alg:enhanced-vN}
as long as in line 8, $Ap_{i+1}$ and $y_{i}$ are contained in $\conv(\tilde{C})$.
In this paper, we shall concentrate on how we can get better rates
than those in \cite{EpelmanFreund00} when $\tilde{C}=C_{i+1}$. 
\end{rem}
We recall a easy result. A proof can be found in \cite{FreundVera99}
for example.
\begin{prop}
(Compactness of $S$) Suppose $\bar{u}\in\intr(K^{*})$. Then the
set $\{p:\bar{u}^{T}p=1,p\in K\}$ is compact. The set $S$ in \eqref{eq:set-S}
is compact as well.
\end{prop}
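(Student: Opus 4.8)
The plan is to prove compactness of $P:=\{p:\bar{u}^{T}p=1,p\in K\}$ first, and then obtain compactness of $S$ for free, since $S=\{Ap:p\in P\}=A(P)$ is the image of $P$ under the continuous linear map $A$, and continuous images of compact sets are compact. For $P$ itself, note that it is the intersection of the closed cone $K$ with the closed affine hyperplane $\{p:\bar{u}^{T}p=1\}$, hence automatically closed; by Heine--Borel the only thing left to check is that $P$ is bounded.

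To prove boundedness I would argue by contradiction, and the key analytic input is the standard characterization of the interior of a dual cone. Since $K$ is a closed convex cone with nonempty interior and $\intr(K^{*})$ is nonempty (it contains $\bar{u}$), the cone $K$ is pointed, and membership $\bar{u}\in\intr(K^{*})$ is equivalent to
\[
\bar{u}^{T}x>0\quad\text{for every }x\in K\setminus\{0\}.
\]
Now suppose $P$ were unbounded, so there is a sequence $\{p_{k}\}\subset P$ with $\|p_{k}\|\to\infty$. After passing to a subsequence, the normalized vectors $p_{k}/\|p_{k}\|$ converge to some $d$ with $\|d\|=1$. Each $p_{k}/\|p_{k}\|$ lies in the cone $K$, and $K$ is closed, so $d\in K$; moreover $\bar{u}^{T}(p_{k}/\|p_{k}\|)=1/\|p_{k}\|\to0$ gives $\bar{u}^{T}d=0$. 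But $d\in K\setminus\{0\}$, so the displayed strict inequality forces $\bar{u}^{T}d>0$, a contradiction. Hence $P$ is bounded, and therefore compact.

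The main obstacle—indeed the only nonroutine point—is justifying the equivalence between $\bar{u}\in\intr(K^{*})$ and strict positivity of $\bar{u}^{T}x$ on $K\setminus\{0\}$, which is precisely what turns the limiting direction $d$ into a contradiction; the remaining topology (closedness, Heine--Borel, continuity of $A$) is immediate. Rather than reprove this convex-analytic fact, I would cite it from a standard reference such as \cite{Rockafellar70} (the same conclusion being available in \cite{FreundVera99}).
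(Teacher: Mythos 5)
Your proof is correct. Note that the paper does not actually prove this proposition --- it simply defers to \cite{FreundVera99} --- so there is no in-paper argument to compare against; your normalized-sequence argument, resting on the standard fact that $\bar{u}\in\intr(K^{*})$ forces $\bar{u}^{T}x>0$ on $K\setminus\{0\}$, is the usual proof and is complete. (A marginally more direct variant avoids the contradiction: from $\mathbb{B}(\bar{u},\epsilon)\subset K^{*}$ one gets $\bar{u}^{T}p\geq\epsilon\|p\|$ for all $p\in K$, whence $\|p\|\leq1/\epsilon$ on the set in question.)
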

Our first result is the finite convergence of Algorithm \ref{alg:enhanced-vN}
if $0\in\intr(S)$.
\begin{thm}
\label{thm:fin-conv-1}(Finite convergence) Suppose Algorithm \ref{alg:enhanced-vN}
is used to solve \eqref{eq:G_p_vN_pair} for which \eqref{eq:von-Neumann-ineq-1}
is feasible. If $0\in\intr(S)$, where $S$ is as defined in \eqref{eq:set-S},
and the choices $p_{i+1}$ in line 3 and $C_{i+1}$ in line 7 are
chosen by \eqref{eq:new-p-i-1-formula} and
\[
C_{i+1}=\{Ap_{0},\dots,Ap_{i+1}\}
\]
 for all iterations $i$, then Algorithm \ref{alg:enhanced-vN} converges
in finitely many iterations.\end{thm}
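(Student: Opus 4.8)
The plan is to prove finite termination not through the convergence rate of $\{\|y_i\|\}$ but through a packing argument in the compact set $S$. The driving observation is that, under the stated choices, every newly generated point $Ap_{i+1}$ is uniformly far (by a fixed distance $r$) from all previously generated points, so the compactness of $S$ caps the number of iterations.

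First I would record the standing reductions. Since \eqref{eq:von-Neumann-ineq-1} is feasible, its alternative \eqref{eq:perceptron-ineq-1} is infeasible, so the test in line 4 never succeeds and line 3 always returns a point $p_{i+1}$; the only way the loop can stop is at line 13 with $\|y_i\|=0$. I argue by contradiction: suppose the algorithm never terminates, so $y_i\neq0$ for every $i$. Because $\tilde{C}=C_{i+1}=\{Ap_0,\dots,Ap_{i+1}\}$, the iterate is the genuine projection $y_i=P_{\scriptsize\conv(C_i)}(0)$ (running Algorithm \ref{alg:active-QP} to completion, cf.\ Proposition \ref{prop:Facts-active-QP}(6)), which yields the optimality inequality $y_i^T v\ge\|y_i\|^2\ge0$ for every $v\in\conv(C_i)$; in particular $y_i^T(Ap_j)\ge\|y_i\|^2$ for all $j\le i$.

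Next I would exploit $0\in\intr(S)$. Fix $r>0$ with $B(0,r)\subset S$. Since $Ap_{i+1}$ minimizes $v\mapsto y_i^T v$ over $v\in S$ (the reformulation of \eqref{eq:new-p-i-1-formula}) and the admissible point $-r\,y_i/\|y_i\|$ lies in $B(0,r)\subset S$, I obtain $y_i^T(Ap_{i+1})\le-r\|y_i\|<0$. Combining this with the optimality inequality, for every $j\le i$,
\[
y_i^T\bigl(Ap_j-Ap_{i+1}\bigr)\ge\|y_i\|^2+r\|y_i\|\ge r\|y_i\|,
\]
and dividing by $\|y_i\|$ and applying the Cauchy--Schwarz inequality gives $\|Ap_j-Ap_{i+1}\|\ge r$. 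Hence the points $\{Ap_i\}_i$ are pairwise separated by at least $r$. Since $S$ is compact, only finitely many points can be pairwise at distance $\ge r$ (disjoint balls of radius $r/2$ centered at these points lie in a bounded neighborhood of $S$), contradicting the assumption of infinitely many iterations. Therefore the algorithm stops in finitely many steps, necessarily with $\|y_i\|=0$, i.e.\ an $x_i$ solving \eqref{eq:von-Neumann-ineq-1}.

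The point needing the most care — and the main obstacle — is the optimality inequality $y_i^T v\ge\|y_i\|^2$ over the \emph{full} hull $\conv(C_i)$. This is precisely where the hypothesis $\tilde{C}=C_{i+1}$ enters: it guarantees $y_i$ is the projection onto all of $\conv(C_i)$, so the separation estimate holds against \emph{every} earlier point $Ap_j$. Stopping the QP early would yield optimality only against a sub-hull, and the uniform pairwise separation — hence the whole packing argument — would break down. The remaining steps (the compactness of $S$, the inscribed-ball bound from $0\in\intr(S)$, and the packing estimate) are routine.
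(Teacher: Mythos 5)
Your proposal is correct and follows essentially the same route as the paper: both argue by contradiction, combine the projection optimality condition $y_i^T Ap_j\ge\|y_i\|^2>0$ for the past points with the inscribed-ball estimate $y_i^TAp_{i+1}\le-r\|y_i\|$ for the new point, and conclude by a packing/compactness argument in the compact set $S$. The only cosmetic difference is that you quantify the resulting uniform separation of the points $\{Ap_i\}_i$ in Euclidean distance via Cauchy--Schwarz, whereas the paper expresses it as a lower bound $\sin^{-1}(\delta/M)$ on the pairwise angles $\angle[Ap_j]0[Ap_k]$ and packs directions on the unit sphere; your variant is marginally cleaner since it avoids normalizing by $M=\max_{s\in S}\|s\|$.
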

\begin{proof}
Seeking a contradiction, suppose Algorithm \ref{alg:enhanced-vN}
runs indefinitely. Since $0\in\intr(S)$, let $\delta>0$ be such
that $\mathbb{B}(0,\delta)\subset S$, and let $M:=\max_{s\in S}\|s\|$.
Recall that $y_{i}=Ap_{i}$ for all $i$. We use induction to prove
that 
\begin{equation}
\angle[Ap_{j}]0[Ap_{k}]\geq\sin^{-1}(\delta/M)\mbox{ for all }0<j<k,\label{eq:fin-conv-indn-hyp}
\end{equation}
which leads to a contradiction because of the compactness of the unit
ball in $\mathbb{R}^{m}$. 

Suppose \eqref{eq:fin-conv-indn-hyp} is true for all $k\leq i$.
We show that \eqref{eq:fin-conv-indn-hyp} is true for $k=i+1$. Since
$y_{i}$ equals $P_{C_{i}}(0)$, where $C_{i}=\{Ap_{0},\dots,Ap_{i}\}$,
we have 
\begin{equation}
y_{i}^{T}Ap_{j}>0\mbox{ for all }j\in\{0,\dots,i\}.\label{eq:fin-conv-step-1}
\end{equation}
The point $p_{i+1}$ is chosen so that $Ap_{i+1}$ is a minimizer
of $\min\{[\frac{y_{i}}{\|y_{i}\|}]^{T}s:s\in S\}$. Note that since
$\mathbb{B}(0,\delta)\subset S$, $[\frac{y_{i}}{\|y_{i}\|}]^{T}Ap_{i+1}\leq-\delta$,
and $\|Ap_{i+1}\|\leq M$, we have 
\begin{equation}
\left[\frac{y_{i}}{\|y_{i}\|}\right]^{T}\frac{Ap_{i+1}}{\|Ap_{i+1}\|}\leq-\frac{\delta}{M}.\label{eq:fin-conv-step-2}
\end{equation}
Note that \eqref{eq:fin-conv-step-2} implies that $\angle[Ap_{i+1}]0v\geq\sin^{-1}(\delta/M)$
for all $v$ such that $y_{i}^{T}v\geq0$. Combining \eqref{eq:fin-conv-step-1}
and the induction hypothesis, we see that \eqref{eq:fin-conv-indn-hyp}
is true for $k=i+1$. This ends the proof of our result.
\end{proof}

\subsection{\label{sub:bdry-case}When $0\in\partial S$}

We now treat the case when $0$ lies in the boundary of $S$ (as defined
in \eqref{eq:set-S}) and $A\in\mathbb{R}^{2\times n}$ (i.e., $m=2$).
This setting implies $S\subset\mathbb{R}^{2}$, which in turn allows
for a detailed analysis.

If $p_{i+1}$ is chosen by \eqref{eq:new-p-i-1-formula}, then $Ap_{i+1}$
is a minimizer of $\min\{y_{i}^{T}s:s\in S\}$. Since $0\in\partial S$,
we look at $T_{S}(0)$, the tangent cone of $S$ at $0$, and the
case when $\dim(T_{S}(0))$ equals two. If $\dim(T_{S}(0))$ equals
to one instead, then $\dim(S)$ equals one, in which case $S$ is
a line segment. Once the end points of the line segment are identified,
we know all that we need about the set $S$. 

In view of the above discussions, we simplify Algorithm \ref{alg:enhanced-vN}
to the particular setting of interest where $m=2$ and $0\in\partial S$. 
\begin{algorithm}
\label{alg:simple-enh-vN}(Algorithm for system \eqref{eq:G_p_vN_pair})
For a compact convex set $S\subset\mathbb{R}^{2}$ containing $0$
on its boundary, this algorithm tries to find a sequence of iterates
$y\in\mathbb{R}^{2}$ converging to $0$. 

01$\quad$Set $y_{0}$ in the boundary of $S$ and $i=0$

02$\quad$Loop

03$\quad$$\formtab$Find $s_{i+1}$ such that $s_{i+1}$ is a minimizer
of $\min_{s\in S}\, y_{i}^{T}s.$

04$\quad$$\formtab$Let $C_{i+1}=\{y_{0},s_{1},\dots,s_{i+1}\}$,
$y_{i+1}:=P_{\scriptsize\conv(C_{i+1})}(0)$ and $i\leftarrow i+1$

05$\quad$until $\|y_{i}\|$ small.
\end{algorithm}
Note that lines 3-5 of Algorithm \ref{alg:enhanced-vN} accommodate
for the cases when $0\in\intr(S)$, $0\in\partial S$ and $0\notin S$.
Since we only wish to study the case where $0\in\partial S$, we took
out the corresponding lines in Algorithm \ref{alg:simple-enh-vN}. 

We also enforced that $y_{0}$ lies on the boundary of $S$ to simplify
our analysis.

In line 4 of Algorithm \ref{alg:simple-enh-vN}, we do not try to
reduce the size of $C_{i+1}$. The next result shows that since $m=2$,
there is no need to reduce the size of $C_{i+1}$. For 2 points $\alpha,\beta\in\mathbb{R}^{2}$,
we let $(\alpha,\beta)$ denote the set 
\[
(\alpha,\beta)=\{t\alpha+(1-t)\beta:t\in(0,1)\}.
\]

\begin{thm}
\label{thm:bisec-enhanced-vN}(Bisection behavior of Algorithm \ref{alg:simple-enh-vN})
In Algorithm \ref{alg:simple-enh-vN}, suppose $y_{0}\neq0$ and $\dim(S)=2$.
Then for each $i\geq0$, 
\begin{enumerate}
\item Either $s_{1}=0$ or $y_{1}\in(y_{0},s_{1})$. 
\item Suppose $s_{i}\neq0$, $y_{i}\neq0$, and that $y_{i}\in(\bar{c}_{i},s_{i})$
for some $\bar{c}_{i}\in C_{i-1}$. \\
Then either $y_{i+1}\in(\bar{c}_{i},s_{i+1})$, $y_{i+1}\in(s_{i},s_{i+1})$,
or $s_{i+1}=0$.
\end{enumerate}
\end{thm}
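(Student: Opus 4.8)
My plan is to prove the two parts by a single induction, with part (1) as the base case and part (2) as the inductive step. Throughout, the two governing facts are the sign conditions produced by the projection and by the minimization in line 3. Since $0\in S$ and $s_{i+1}$ minimizes $y_i^Ts$ over $S$, we have $y_i^Ts_{i+1}\le y_i^T0=0$. On the other hand, because $y_i=P_{\scriptsize\conv(C_i)}(0)$, the variational characterization of the projection gives $y_i^Tc\ge\|y_i\|^2>0$ for every $c\in C_i$, with equality exactly on the face of $\conv(C_i)$ containing $y_i$; under the hypothesis $y_i\in(\bar c_i,s_i)$ that face is the edge $[\bar c_i,s_i]$, so $y_i^T\bar c_i=y_i^Ts_i=\|y_i\|^2$ and $\bar c_i,s_i$ lie on the line through $y_i$ orthogonal to $y_i$, on opposite sides of $y_i$. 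I would dispose of the degenerate case $s_{i+1}=0$ at once, and otherwise keep $s_{i+1}\neq0$, where $y_i^Ts_{i+1}\le0<\|y_i\|^2$ shows $s_{i+1}$ lies strictly on the origin side of that line.

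For part (1), $\conv(C_1)$ is the single segment $[y_0,s_1]$, and differentiating $\|(1-t)y_0+ts_1\|^2$ at $t=0$ and $t=1$, using $y_0^Ts_1\le0<\min\{\|y_0\|^2,\|s_1\|^2\}$, shows the norm strictly decreases when moving inward from either endpoint; hence $y_1$ is neither $y_0$ nor $s_1$, giving $y_1\in(y_0,s_1)$. For part (2) I would work in the plane, choosing coordinates so that $y_i$ points along the positive first axis; then $C_i\subset\{x_1\ge\|y_i\|\}$, the active pair $\bar c_i,s_i$ sit on the line $x_1=\|y_i\|$ on opposite sides of the axis, and $s_{i+1}$ lies in $\{x_1\le0\}$. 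The fact that $0\in\partial S$ supplies a supporting line of $S$ at $0$, i.e.\ a half-plane through the origin containing all of $S$; combining this width-$\le\pi$ constraint with the positions of $\bar c_i,s_i,s_{i+1}$ forces $s_{i+1}$ to lie on exactly one side of the axis (the side of $\bar c_i$, or the side of $s_i$). I would then establish two exclusions: $(i)$ $y_{i+1}\neq s_{i+1}$, because the inner product of $s_{i+1}$ with the active point on the \emph{opposite} side is strictly negative, so $s_{i+1}$ cannot satisfy the vertex optimality $s_{i+1}^Tc\ge\|s_{i+1}\|^2$; and $(ii)$ $y_{i+1}\ne y_i$ and $y_{i+1}\notin\conv(C_i)$, because $y_i^Ts_{i+1}<\|y_i\|^2$ makes $s_{i+1}-y_i$ a strict descent direction for the norm, whence $\|y_{i+1}\|<\|y_i\|$ while every point of $\conv(C_i)$ has norm $\ge\|y_i\|$. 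Since in the plane the projection is attained at a vertex or in the relative interior of an edge, these exclusions force $y_{i+1}$ into the relative interior of an edge $[u,s_{i+1}]$ of $\conv(C_{i+1})$ with $u\in C_i$.

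The crux, and the step I expect to be the main obstacle, is to identify $u$ as $\bar c_i$ or $s_i$, equivalently to show that the projection onto the full hull $\conv(C_{i+1})$ coincides with the projection onto the triangle $\conv\{\bar c_i,s_i,s_{i+1}\}$, so that no stale point of $C_i$ supplies the nearest edge. The danger is an old point lying in the thin wedge at $s_i$ (or $\bar c_i$) between the old edge line $\aff(\bar c_i,s_i)$ and the new edge line $\aff(s_i,s_{i+1})$, which would resurface as the true neighbour of $s_{i+1}$. To rule this out I would carry an auxiliary invariant along the induction, namely that $\bar c_i$ and $s_i$ are the two angularly extreme points of $C_i$ in the sector $T_S(0)$ and that every other point of $C_i$ lies behind the edge $[\bar c_i,s_i]$. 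The exposed-point property of $s_{i+1}$ (it minimizes the linear functional $y_i^T\cdot$, hence is an extreme point of $S$ at angular distance $\ge\pi/2$ from $y_i$) is exactly what lets the extreme pair update from $\{\bar c_i,s_i\}$ to $\{s_i,s_{i+1}\}$ or $\{\bar c_i,s_{i+1}\}$ while keeping all remaining points behind the new edge.

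Once this reduction to the triangle is justified, the trichotomy is immediate from the planar analysis of projecting $0$ onto $\conv\{\bar c_i,s_i,s_{i+1}\}$: by the side determination above the foot lands on the edge joining $s_{i+1}$ to the opposite-side active vertex, and by exclusions $(i)$–$(ii)$ it is an interior point of that edge. This yields $y_{i+1}\in(\bar c_i,s_{i+1})$ or $y_{i+1}\in(s_i,s_{i+1})$, which re-establishes the hypothesis of part (2) at step $i+1$ (with the new $\bar c_{i+1}\in\{\bar c_i,s_i\}\subset C_i$) and closes the induction. I expect the routine planar projection computations to be straightforward; the real work is the invariant-maintenance argument that prevents older points of $C_i$ from interfering, which is where both the two-dimensionality and the minimizing choice of $s_{i+1}$ are indispensable.
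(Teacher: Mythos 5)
Your overall strategy is sound and its subsidiary claims are all true, but you take a genuinely different route from the paper at the step you yourself flag as the crux. The paper also first establishes that $y_{i+1}$ is neither a vertex of $C_{i+1}$ nor on an old edge (its Claims 1 and 2, matching your exclusions $(i)$ and $(ii)$, though it excludes the vertex $s_{i+1}$ via an angle argument with $y_i$ rather than via the opposite active vertex). For the remaining step it does \emph{not} carry an invariant: it argues that for any stale $c\in C_{i}\setminus\{\bar c_i,s_i\}$ the segment $[c,s_{i+1}]$ must cross the segment $[\bar c_i,s_i]$, whence $\min\bigl(d(0,[\bar c_i,s_{i+1}]),d(0,[s_i,s_{i+1}])\bigr)\le d(0,[c,s_{i+1}])$ and the nearest edge must be one of the two admissible ones. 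That crossing claim is justified only by the sign conditions $y_i^Tc\ge\|y_i\|^2$ and $y_i^Ts_{i+1}\le0$, which by themselves give crossing of the \emph{line} $\aff(\bar c_i,s_i)$, not of the segment; ruling out a crossing point far outside $[\bar c_i,s_i]$ requires exactly the positional control on stale points that your invariant makes explicit. So your approach buys rigor at the soft spot, at the cost of an extra inductive hypothesis; the paper's buys brevity.

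Two remarks on completing your plan. First, your invariant is maintainable for the reason you suspect but do not prove: with $y_i$ along the positive first axis, $\bar c_i=(h,-\beta)$, $s_i=(h,\alpha)$, $s_{i+1}=(u,v)$ with $u\le0$, the width-$\le\pi$ condition on $T_S(0)$ reads $vh\ge\beta|u|$ (resp.\ symmetrically), which forces $v\ne0$; and if $v>0$ a direct computation shows $\bar c_i$ lies on the \emph{origin} side of $\aff(s_i,s_{i+1})$ (the test reduces to $(u-1)(\alpha+\beta)\ge0$, which fails), so the triangle's nearest edge is $[\bar c_i,s_{i+1}]$ and the new active pair is again the angularly extreme pair. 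Second, the sufficiency of the invariant also needs the width condition: writing $p$ for the foot on $(\bar c_i,s_{i+1})$, one checks $p_2\ge0$ precisely when $vh\ge\beta|u|$, and then every point of the wedge $\{x_1\ge h\}\cap\{\mbox{angle}\ge\mbox{angle}(\bar c_i)\}$, which by the invariant contains all of $C_i$, satisfies $p^Tx\ge\|p\|^2$, so the triangle projection is the full-hull projection. With these two computations supplied, your induction closes; as written, they are asserted rather than proved, so you should carry them out explicitly.
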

\begin{figure}[!h]
\includegraphics[scale=0.5]{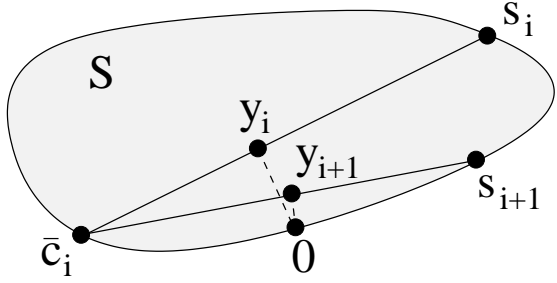}\caption{\label{fig:illustrate-bisect}Illustration of Theorem \ref{thm:bisec-enhanced-vN}.}
\end{figure}

\begin{proof}
If at any point $s_{i+1}=0$, then $y_{i+1}=P_{C_{i+1}}(0)=0$, resulting
in the termination of Algorithm \ref{alg:simple-enh-vN}. We shall
rule this case out to simplify our proof. 

We first prove (1). For $i=0$, $y_{1}=P_{\scriptsize\conv\{y_{0},s_{1}\}}(0)$.
Since $s_{1}$ is chosen to be a minimizer of $\min_{s\in S}y_{0}^{T}s$,
we have $y_{0}^{T}s_{1}\leq y_{0}^{T}0=0$. Since $s_{1}\neq0$ and
$y_{0}\neq0$, this means that $\angle y_{0}0s_{1}\geq\pi/2$, which
implies that $\angle0y_{0}s_{1}<\pi/2$ and $\angle0s_{1}y_{0}<\pi/2$.
So $y_{1}=P_{\scriptsize\conv\{y_{0},s_{1}\}}(0)$ must lie in the
set $(y_{0},s_{1})$. 

\textbf{}The projection of $0$ onto the polyhedron $\conv(C_{i})$
must land on a face of the polyhedron. If such a face is 2-dimensional,
then this means that $0$ lies in the (relative) interior of $\conv(C_{i})$,
but this would imply that $0$ lies in the interior of $S$ as $\intr\,\conv(C_{i})\subset\intr(S)$.
If the face if 0-dimensional, this means that $y_{i}=P_{\scriptsize\conv(C_{i})}(0)$
is a point in $C_{i}$. The other possibility is that the face is
1-dimensional, which corresponds to $y_{i}=P_{\scriptsize\conv(C_{i})}(0)$
being in $(c_{1},c_{2})$, where $c_{1},c_{2}$ are distinct elements
in $C_{i}$.

We prove (2) by induction. Statement (1) shows that the base case
holds. Suppose our claim is true for $i=i^{*}$. Then $y_{i^{*}}=(\bar{c}_{i^{*}},s_{i^{*}})$
for some $\bar{c}_{i^{*}}\in C_{i^{*}}$. Now, $y_{i^{*}}=P_{\scriptsize\conv C_{i^{*}}}(0)$
implies that 
\begin{equation}
y_{i^{*}}^{T}c\geq y_{i^{*}}^{T}y_{i^{*}}>0\mbox{ for all }c\in\conv(C_{i^{*}}).\label{eq:y-c-geq-0-conv-C}
\end{equation}

\textbf{Claim 1: $y_{i^{*}+1}$ cannot be a point in $C_{i^{*}+1}$. }

We take a look at $y_{i^{*}+1}=P_{\scriptsize\conv(C_{i^{*}+1})}(0)$.
If $y_{i^{*}+1}$ is some point in $C_{i^{*}+1}$, then the possibilities
are that $y_{i^{*}+1}\in C_{i^{*}}$ or $y_{i^{*}+1}=s_{i^{*}+1}$.
If $y_{i^{*}+1}\in C_{i^{*}}$, then note that $\conv(C_{i^{*}})\subset\conv(C_{i^{*}+1})$,
so $y_{i^{*}}=P_{\scriptsize\conv(C_{i^{*}})}$ must be a point in
$C_{i^{*}}$ as well, but this is ruled out by the induction hypothesis.
We now rule out $y_{i^{*}+1}=s_{i^{*}+1}$. Now $s_{i^{*}+1}\in\conv(C_{i^{*}+1})$
and $y_{i^{*}}\in\conv(C_{i^{*}})\subset\conv(C_{i^{*}+1})$. Recall
that $s_{i^{*}+1}$ is chosen to be a minimizer of $\min_{s\in S}y_{i^{*}}^{T}s$,
so 
\begin{equation}
y_{i^{*}}^{T}s_{i^{*}+1}\leq y_{i^{*}}^{T}0=0,\label{eq:y-s-1-geq-0}
\end{equation}
or $\angle y_{i^{*}}0s_{i^{*}+1}\geq\pi/2$. Since $y_{i^{*}}\neq0$
and $s_{i^{*}+1}\neq0$, we have $\angle y_{i^{*}}s_{i^{*}+1}0<\pi/2$
and $\angle s_{i^{*}+1}y_{i^{*}}0<\pi/2$, so 
\[
d(0,\{s_{i^{*}+1},y_{i^{*}}\})>d(0,\conv\{s_{i^{*}+1},y_{i^{*}}\})\geq d(0,\conv(C_{i^{*}+1})).
\]
Thus $y_{i^{*}+1}$ cannot be a point in $C_{i^{*}+1}$. 

\textbf{Claim 2: If $y_{i^{*}}\in(\bar{c}_{i^{*}},s_{i^{*}})$, then
either $y_{i^{*}+1}\in(\bar{c}_{i^{*}},s_{i^{*}+1})$ or $y_{i^{*}+1}\in(s_{i^{*}},s_{i^{*}+1})$ }

If $y_{i^{*}+1}$ lies in some line segment $(c_{1},c_{2})$, where
$c_{1}$ and $c_{2}$ are distinct elements in $C_{i^{*}}$, then
\[
d(0,C_{i^{*}})=d(0,y_{i^{*}})>d\big(0,(s_{i^{*}+1},y_{i^{*}})\big)\geq d(0,C_{i^{*}+1})=d(0,(c_{1},c_{2}))\geq d(0,C_{i^{*}}),
\]
which is absurd. Thus $y_{i^{*}+1}$ must lie in the segment $(c,s_{i^{*}+1})$
for some $c\in C_{i^{*}}$. We need to prove that $c$ can only be
either $\bar{c}_{i^{*}}$ or $s_{i^{*}}$. 

Since $y_{i^{*}}\in(\bar{c}_{i^{*}},s_{i*})$ and $\bar{c}_{i^{*}}$
and $s_{i^{*}}$ both lie on the boundary of $S$, the line $\aff(\{\bar{c}_{i^{*}},s_{i^{*}}\})$
is a supporting hyperplane of $\conv(C_{i^{*}})$ at $y_{i^{*}}$.
Take any $c\in C_{i^{*}}\backslash\{\bar{c}_{i^{*}},s_{i^{*}}\}$.
Since $S\subset\mathbb{R}^{2}$, we make use of \eqref{eq:y-s-1-geq-0}
and \eqref{eq:y-c-geq-0-conv-C} to see that the line segment $[c,s_{i^{*}+1}]$
has to intersect somewhere in the line segment $[\bar{c}_{i^{*}},s_{i^{*}}]$.
By working out the possibilities in $\mathbb{R}^{2}$, we see that
\[
\min\big(d(0,[\bar{c}_{i^{*}},s_{i^{*}+1}]),d(0,[s_{i^{*}},s_{i^{*}+1}])\big)\leq d(0,[c,s_{i^{*}+1}]).
\]
Thus $y_{i^{*}+1}$ has to be in $(\bar{c}_{i^{*}},s_{i^{*}+1})$
or $(s_{i^{*}},s_{i^{*}+1})$. 
\end{proof}
The consequence of Theorem \ref{thm:bisec-enhanced-vN} is that when
$\dim(S)=2$, there is no need to revisit dropped boundary points
of $S$ in the active set QP algorithm to project onto the convex
hull of an increasing set of points $C_{i}$. 

Another way to interpret Theorem \ref{thm:bisec-enhanced-vN} is as
follows. The boundary of $S$ is homeomorphic to the sphere $\{x\in\mathbb{R}^{2},|x|=1\}$.
Algorithm \ref{alg:simple-enh-vN} is a bisection strategy. In iteration
$i$ when $y_{i}\in(\bar{c}_{i},s_{i})$ as in the notation of Theorem
\ref{thm:bisec-enhanced-vN}, Algorithm \ref{alg:simple-enh-vN} identifies
that $0$ lies on the path along the boundary from $\bar{c}_{i}$
to $s_{i}$. After the next iteration, either $y_{i+1}\in(\bar{c}_{i},s_{i+1})$
or $y_{i+1}\in(s_{i+1},s_{i})$. This means that Algorithm \ref{alg:simple-enh-vN}
has found that $0$ lies along the path along the boundary of $S$
from $s_{i+1}$ to either $\bar{c}_{i}$ or $s_{i}$. Notice that
even if $0$ were very close to $\bar{c}_{i}$ (or $s_{i}$ instead)
for example, the next point $s_{i+1}$ depends only on the geometry
of $S$ and not on the position of $y_{i}$. We shall see in Proposition
\ref{prop:any-sequence} that the ratio between $\|s_{i+1}-\bar{c}_{i}\|$
and $\|s_{i+1}-s_{i}\|$ can be arbitrarily large or small.

As a consequence of Theorem \ref{thm:bisec-enhanced-vN}, we prove
that the convergence of $\{\|y_{i}\|\}_{i}$ to zero is at least linear.
\begin{thm}
\label{thm:lin-conv-best}(Linear convergence of $\{\|y_{i}\|\}_{i}$
in Algorithm \ref{alg:simple-enh-vN}) The convergence of $\{\|y_{i}\|\}_{i}$
in Algorithm \ref{alg:simple-enh-vN} to zero is at worst linear with
constant $1/\sqrt{2}$. \end{thm}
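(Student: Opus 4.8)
The plan is to establish a per-iteration contraction $\|y_{i+1}\|\le\frac{1}{\sqrt{2}}\|y_{i}\|$, from which the asserted linear rate is immediate. The natural starting point is the variational characterization of the projection $y_{i+1}=P_{\scriptsize\conv(C_{i+1})}(0)$. Since $C_{i+1}=C_{i}\cup\{s_{i+1}\}$ and $y_{i}=P_{\scriptsize\conv(C_{i})}(0)\in\conv(C_{i})\subset\conv(C_{i+1})$, the projection inequality gives $y_{i+1}^{T}(y_{i}-y_{i+1})\geq0$, that is, $y_{i}^{T}y_{i+1}\geq\|y_{i+1}\|^{2}$. Writing $\theta_{i}:=\angle y_{i}0y_{i+1}$, this reads $\|y_{i}\|\,\|y_{i+1}\|\cos\theta_{i}\geq\|y_{i+1}\|^{2}$, so that
\[
\|y_{i+1}\|\leq\|y_{i}\|\cos\theta_{i}.
\]
In particular $\theta_{i}\in[0,\pi/2]$, and it suffices to prove that the iterate direction rotates by at least $\pi/4$ at every step, namely $\theta_{i}\geq\pi/4$; then $\cos\theta_{i}\leq1/\sqrt{2}$ and the theorem follows.

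To set this up I would fix coordinates adapted to step $i$: place $0$ at the origin and rotate so that $y_{i}=(0,\|y_{i}\|)$. By Theorem \ref{thm:bisec-enhanced-vN}, $y_{i}$ is the foot of the perpendicular from $0$ onto the bracket segment $(\bar{c}_{i},s_{i})$, so that segment lies on the horizontal line $\{Y=\|y_{i}\|\}$ with $\bar{c}_{i},s_{i}$ on opposite sides of $y_{i}$. Because $s_{i+1}$ minimizes $y_{i}^{T}s$ over $S$ and $0\in S$, we have $y_{i}^{T}s_{i+1}\leq0$, i.e. $s_{i+1}$ lies in the lower half plane $\{Y\leq0\}$; equivalently the supporting line of $S$ at the extreme point $s_{i+1}$ is horizontal with $S$ above it. Theorem \ref{thm:bisec-enhanced-vN} also tells us that $y_{i+1}$ lies on one of the new edges $(\bar{c}_{i},s_{i+1})$ or $(s_{i},s_{i+1})$; call the retained bracket vertex $w=(w_{1},\|y_{i}\|)$, so that $y_{i+1}$ is the normal foot of $0$ on the line through $w$ and $s_{i+1}=(x,y)$ with $y\leq0$.

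The key geometric step is to show that this new edge $[w,s_{i+1}]$ makes an angle of at least $\pi/4$ with the old horizontal edge: since $y_{i}$ and $y_{i+1}$ are the respective edge normals, the rotation from $y_{i}$ to $y_{i+1}$ equals the rotation from the old edge to the new edge, so $\theta_{i}$ is the (acute) angle between the two edges. Concretely, the claim $\theta_{i}\geq\pi/4$ is exactly the steepness bound
\[
|x-w_{1}|\leq\|y_{i}\|-y,
\]
i.e. the vertical drop of the new edge dominates its horizontal span. I would prove this by combining three ingredients: the supporting line of $S$ at $0$ (available since $0\in\partial S$), the horizontal supporting line at $s_{i+1}$, and the bracketing conclusion of Theorem \ref{thm:bisec-enhanced-vN} that $0$ lies on the boundary arc cut off by the retained edge. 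A short case analysis on which of $\bar{c}_{i},s_{i}$ is retained (equivalently, on which side of $s_{i+1}$ the origin falls along $\partial S$) should reduce both cases to this same planar inequality.

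The main obstacle is precisely this last inequality: controlling how far $s_{i+1}$ can stray horizontally relative to $w$. The difficulty is that $s_{i+1}$, being the extreme point of $S$ in the direction $y_{i}$, could a priori sit far to one side, flattening the new edge and breaking the angle bound; the role of the supporting line at $0$ together with the bracketing property is to forbid exactly this configuration, and making that exclusion rigorous is the heart of the argument. I expect the extremal case to be the one where $|x-w_{1}|=\|y_{i}\|-y$ and the two supporting lines and the two bracket edges all meet at $45^{\circ}$, and this is what should pin the constant at $1/\sqrt{2}$ rather than anything smaller. Finally, the degenerate boundary situations (a flat piece of $\partial S$ through $0$, or $s_{i+1}=0$) lead either to immediate termination or to a step that trivially satisfies the bound, so they can be dispatched separately, exactly as in the case already excluded at the start of the proof of Theorem \ref{thm:bisec-enhanced-vN}.
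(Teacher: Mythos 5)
Your opening reduction is fine: since $y_{i}\in\conv(C_{i+1})$, the projection inequality gives $y_{i}^{T}y_{i+1}\geq\|y_{i+1}\|^{2}$ and hence $\|y_{i+1}\|\leq\|y_{i}\|\cos\theta_{i}$. But the heart of your argument --- the claim that $\theta_{i}\geq\pi/4$, equivalently the ``steepness bound'' $|x-w_{1}|\leq\|y_{i}\|-y$ --- is false, and no combination of the supporting line at $0$, the horizontal supporting line at $s_{i+1}$, and the bracketing property can rescue it. Concretely, take $S=\conv\{(0,h),(0,0),(L,-\epsilon),(L,h)\}$ with $y_{0}=(0,h)$, so that $0=(0,0)\in\partial S$ and $s_{1}=(L,-\epsilon)$ is the unique minimizer of $y_{0}^{T}s$. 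Then $y_{1}$ is the foot of the perpendicular from $0$ onto $[(0,h),(L,-\epsilon)]$, and $\|y_{1}\|/\|y_{0}\|\to1$ as $L\to\infty$, $\epsilon\to0$: the new edge is nearly parallel to the old direction, $\theta_{i}$ is nearly $0$, and the horizontal span of the new edge vastly exceeds its vertical drop. Nothing in the hypotheses prevents $S$ from being long and flat in this way, and the same flattening can recur at later iterations; indeed the paper points this out just before the theorem (the ratio $\|s_{i+1}-\bar{c}_{i}\|/\|s_{i+1}-s_{i}\|$ can be arbitrary) and Proposition \ref{prop:any-sequence} shows the bracket widths can shrink at any prescribed rate. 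So there is no per-iteration contraction $\|y_{i+1}\|\leq\|y_{i}\|/\sqrt{2}$; the theorem has to be read as an asymptotic (R-linear) rate, not a Q-linear one.

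The paper's proof works around exactly this obstacle with a two-case analysis on the bracket widths $w_{i}=\|\bar{c}_{i}-s_{i}\|$. If the $w_{i}$ themselves decay like $2^{-(i-i^{*})/2}$, an angle/limit argument gives $\|y_{i}\|\leq Kw_{i}$ and one is done. If they do not, the quantity that provably halves each step is not the distance but the \emph{area} $A_{i}$ of $S\cap H_{i}$ (Claim 3: $2A_{i+1}\leq A_{i}$, after first establishing that the relevant supporting angles become acute for large $i$), and the distance is then recovered from the area via $\|y_{i}\|\leq2A_{i}/w_{i}$ together with the assumed lower bound on $w_{i}$. If you want to salvage your approach, you would need to replace the false per-step angle bound with some such telescoping invariant; the area of the cap cut off by the current bracket line is the one that works here.
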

\begin{proof}
Let $w_{i}=\|\bar{c}_{i}-s_{i}\|$. We assume that the convergence
of $\{\|y_{i}\|\}_{i}$ to zero is not finite. We deal with the easier
case first.

\textbf{Case 1: There is some $K>0$ such that if $i^{*}$ is large
enough, $i>i^{*}$ and $\frac{w_{i}}{w_{i^{*}}}\leq2^{-(i-i^{*})/2}$,
then $\|y_{i}\|\leq Kw_{i}\leq2^{-(i-i^{*})/2}Kw_{i^{*}}$.}

Recall $S$ is a convex set in $\mathbb{R}^{2}$ and $0$ lies in
the path from $\bar{c}_{i}$ to $s_{i}$ along $\partial S$. If $\frac{w_{i}}{w_{i^{*}}}\leq2^{-(i-i^{*})/2}$,
we must have $w_{i}\to0$, so $\bar{c}_{i}\to0$ and $s_{i}\to0$.
The limit $\lim_{i\to\infty}\angle\bar{c}_{i}0s_{i}$ exists as $\{\angle\bar{c}_{i}0s_{i}\}_{i}$
is nondecreasing and equals
\[
\lim_{i\to\infty}\angle\bar{c}_{i}0s_{i}=\max\{\angle v_{1}0v_{2}:v_{1},v_{2}\in T_{S}(0)\},
\]
which is finite. Let the limit above be $\theta>0$. We can use elementary
geometry to figure that $\|y_{i}\|$, which is also $d(0,\aff(\{\bar{c}_{i},s_{i}\}))$,
equals 
\[
\|y_{i}\|=\frac{w_{i}\sin\angle\bar{c}_{i}s_{i}0\sin\angle s_{i}\bar{c}_{i}0}{\sin\angle\bar{c}_{i}0s_{i}},
\]
Moreover, 
\begin{eqnarray*}
\limsup_{i\to\infty}\frac{\sin\angle\bar{c}_{i}s_{i}0\sin\angle s_{i}\bar{c}_{i}0}{\sin\angle\bar{c}_{i}0s_{i}} & \leq & \lim_{i\to\infty}\frac{[\sin(\frac{1}{2}[\pi-\angle\bar{c}_{i}0s_{i}])]^{2}}{\sin\angle\bar{c}_{i}0s_{i}}\\
 & = & \begin{cases}
\frac{[\sin(\frac{1}{2}[\pi-\theta])]^{2}}{\sin\theta} & \mbox{ if }\theta<\pi\\
0 & \mbox{ if }\theta=\pi.
\end{cases}
\end{eqnarray*}
There is some $K>0$ such that if $i^{*}$ is large enough and $i>i^{*}$,
then $\|y_{i}\|\leq Kw_{i}$. The remaining inequality is easy, and
this ends our proof for case 1.

Let $q_{i}$ be a point such that $\aff(\{s_{i},q_{i}\})$ is a supporting
hyperplane of $S$ at $s_{i}$, and $q_{i}$ lies on the same side
of $\aff(\{\bar{c}_{i},s_{i}\})$ as $0$. Similarly, let $\bar{q}_{i}$
be a point such that $\aff(\{\bar{c}_{i},\bar{q}_{i}\})$ is a supporting
hyperplane of $S$ at $\bar{c}_{i}$ and $\bar{q}_{i}$ lies on the
same side of $\aff(\{\bar{c}_{i},s_{i}\})$ as $0$. See Figure \ref{fig:pf-lin-conv}.

\textbf{Claim 1:} \textbf{If $\angle\bar{c}_{i}s_{i}q_{i}$ and $\angle s_{i}\bar{c}_{i}\bar{q}_{i}$
are acute, then both $\angle\bar{c}_{i+1}s_{i+1}q_{i+1}$ and $\angle s_{i+1}\bar{c}_{i+1}\bar{q}_{i+1}$}
\textbf{are acute.}

Without loss of generality, assume that $\bar{c}_{i+1}=\bar{c}_{i}$.
(The other possibility of $\bar{c}_{i+1}=s_{i}$ is similar.) One
can see from Figure \ref{fig:pf-lin-conv} that $\bar{q}_{i+1}$ can
be taken to be $\bar{q}_{i}$. The $q_{i+1}$ is also easy to choose.
One can see that $\angle\bar{c}_{i+1}s_{i+1}q_{i+1}$ and $\angle s_{i+1}\bar{c}_{i+1}\bar{q}_{i+1}$
are both acute as claimed.

\begin{figure}
\begin{tabular}{|c|c|}
\hline 
\includegraphics[scale=0.4]{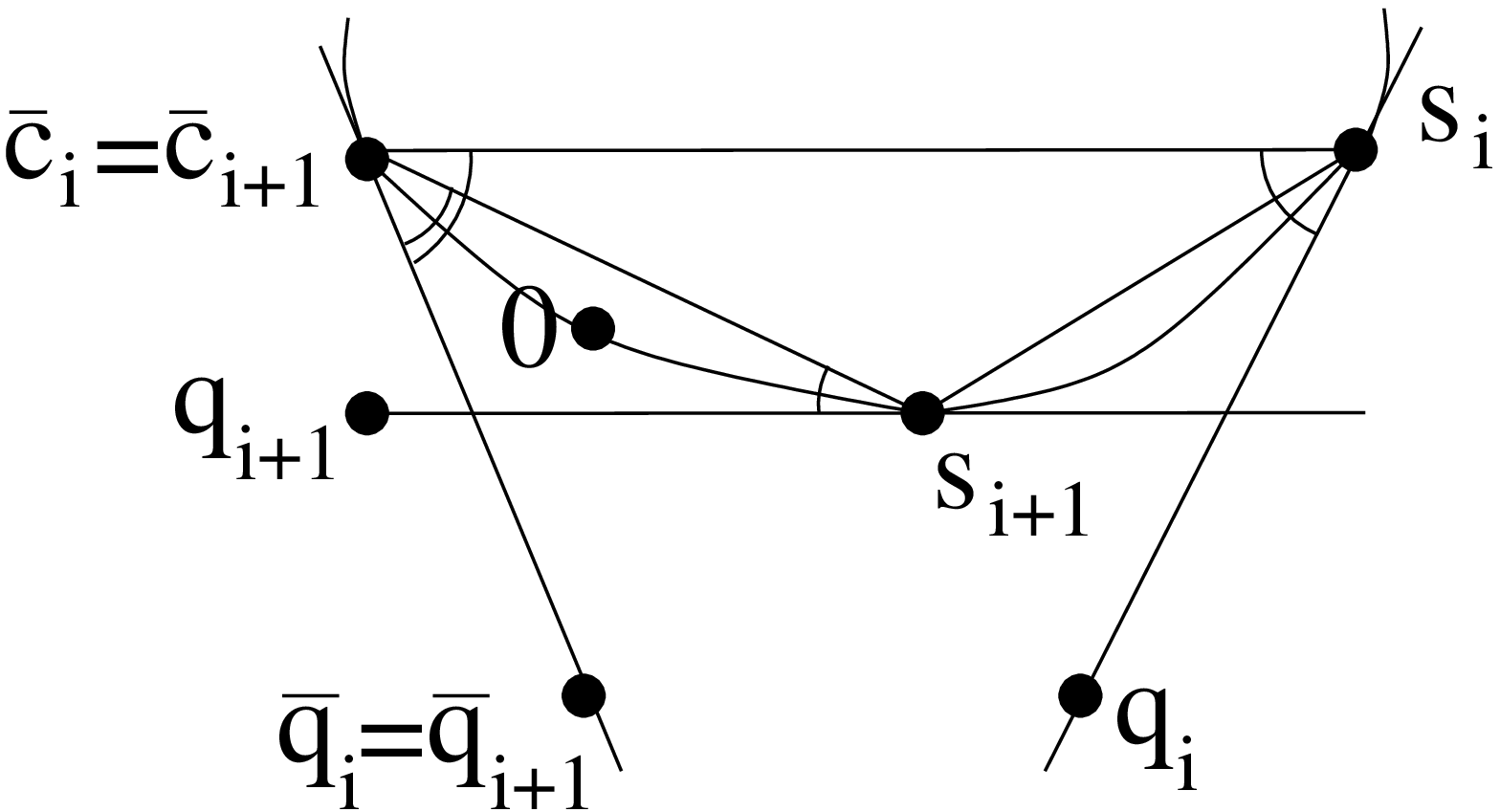} & \includegraphics[scale=0.4]{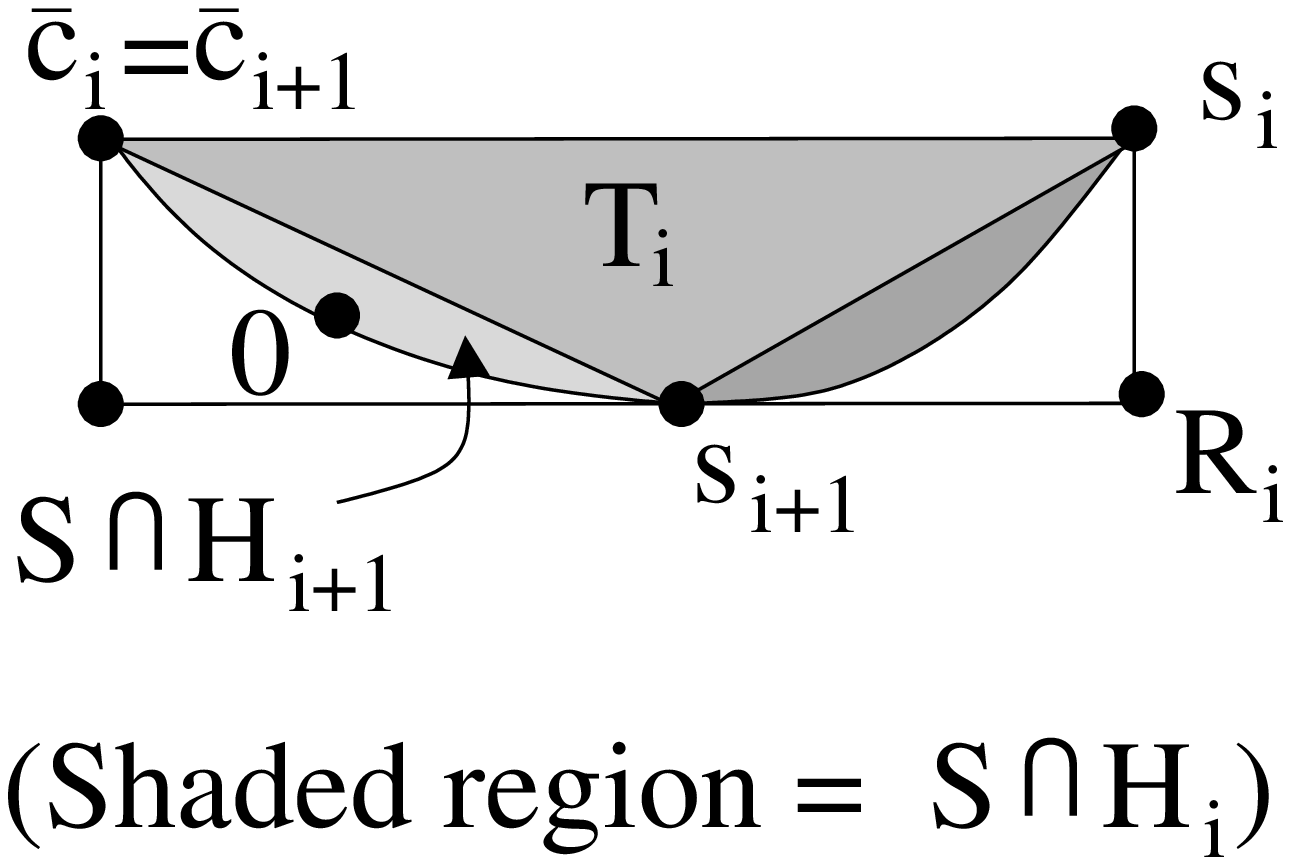}\tabularnewline
\hline 
\end{tabular}

\caption{\label{fig:pf-lin-conv}The diagram on the left is that of the proof
of Claim 1 in Theorem \ref{thm:lin-conv-best}, while the diagram
on the right is that of the proof of Claim 3 in the same theorem.}

\end{figure}

\textbf{Claim 2: For $i$ large enough, both $\angle\bar{c}_{i}s_{i}q_{i}$
and $\angle s_{i}\bar{c}_{i}\bar{q}_{i}$ are acute.}

Note that $\bar{c}_{1}=y_{0}$. One can easily see that $\angle y_{0}s_{1}q_{1}$
is acute. However, the angle $\angle s_{1}y_{0}\bar{q}_{1}$ is not
necessarily acute. If every point on the line segment $[y_{0},0]$
is on the boundary of $S$, then we can choose $\bar{q}_{1}$ so that
$\angle s_{1}y_{0}\bar{q}_{1}$ is acute. 

Consider the following statement:
\begin{itemize}
\item [(*)] Unless every point on the line segment $[y_{0},0]$ is on the
boundary of $S$ (which was already treated in the previous paragraph),
eventually $\bar{c}_{i}\neq y_{0}$ for all $i$ large enough.
\end{itemize}
We now show that ({*}) implies our claim at hand. Suppose ({*}) is
true. Let $i^{*}$ be the smallest $i$ such that $\bar{c}_{i}\neq y_{0}$.
This would mean that $\bar{c}_{i^{*}}=s_{i^{*}-1}$. The angle $\angle s_{i^{*}}\bar{c}_{i^{*}}\bar{q}_{i^{*}}$
can be checked to be acute, and so would $\angle\bar{c}_{i^{*}}s_{i^{*}}q_{i^{*}}$. 

We now prove ({*}) by contradiction. Suppose $\bar{c}_{i}=y_{0}$
for all $i$. The points $\{s_{i}\}_{i}$ trace a path along $\partial S$
getting closer to $0$. Let $s^{*}:=\lim_{i\to\infty}s_{i}$ and 
\[
y^{*}:=\lim_{i\to\infty}\frac{P_{\scriptsize\conv(\{y_{0},s_{i}\})}(0)}{\|P_{\scriptsize\conv(\{y_{0},s_{i}\})}(0)\|}.
\]
If $s^{*}\neq0$, we can see that $y^{*}=\frac{P_{\scriptsize\conv(\{y_{0},s^{*}\})}(0)}{\|P_{\scriptsize\conv(\{y_{0},s^{*}\})}(0)\|}$.
If $s^{*}=0$, then $y^{*}$ is the vector perpendicular to $\aff(\{y_{0},0\})$
such that $s_{1}^{T}y^{*}>0$. Since all points in the line segment
$(y_{0},0)$ lie in $\intr(S)$, all points in the line segment $(y_{0},s^{*})$
also lie in $\intr(S)$. Any minimizer of $\min\{s^{T}y^{*}:s\in S\}$
lies on the path along the boundary of $S$ between $s^{*}$ and $y_{0}$.
So if $s_{i}$ were sufficiently close to $s^{*}$, $s_{i+1}$ would
be forced to be on the boundary of $S$ between $s^{*}$ and $y_{0}$
as well. This contradicts the assumption that $s^{*}=\lim_{i\to\infty}s_{i}$,
ending the proof of the claim.

Let $A_{i}$ be the area of $S\cap H_{i}$, where $H_{i}$ is the
halfspace with boundary $\aff(\{\bar{c}_{i},s_{i}\})$ containing
$0$. See Figure \ref{fig:pf-lin-conv}.

\textbf{Claim 3: $2A_{i+1}\leq A_{i}$ if $i$ is large enough so
that claim 2 holds}

Let the triangle $\conv(\{\bar{c}_{i},s_{i},s_{i+1}\})$ be $T_{i}$.
See Figure \ref{fig:pf-lin-conv}. If $i$ is large enough so that
claim 2 holds, then the set $S\cap H_{i}$ is bounded by four lines:
the line $\aff(\{\bar{c}_{i},s_{i}\})$, the line parallel to $\aff(\{\bar{c}_{i},s_{i}\})$
through $s_{i+1}$, and the lines perpendicular to $\aff(\{\bar{c}_{i},s_{i}\})$
through $\bar{c}_{i}$ and $s_{i}$. The rectangle formed, which we
call $R_{i}$, has twice the area of $T_{i}$. It is clear that $[S\cap H_{i+1}]\cup T_{i}\subset S\cap H_{i}\subset R_{i}$,
which implies $A_{i+1}+\mbox{area}(T_{i})\leq A_{i}$. Also, $S\cap H_{i+1}\subset R_{i}\backslash T_{i}$,
which implies $A_{i+1}\leq\mbox{area}(T_{i})$. Thus $2A_{i+1}\leq A_{i}$,
which is the conclusion we seek.

We now consider the second case.

\textbf{Case 2: If $i^{*}$ is large enough, $i>i^{*}$ and $\frac{w_{i}}{w_{i^{*}}}\geq2^{-(i-i^{*})/2}$,
then $\|y_{i}\|\leq\frac{2A_{i}}{w_{i}}\leq\frac{2^{-i+i^{*}+1}A_{i^{*}}}{2^{-(i-i^{*})/2}w_{i^{*}}}=2^{-(i-i^{*})/2}\frac{2A_{i^{*}}}{w_{i^{*}}}$.}

It is clear from elementary geometry that 
\[
\|y_{i}\|w_{i}\leq d(s_{i+1},\aff(\{\bar{c}_{i},s_{i}\}))w_{i}=\mbox{area}(R_{i})=2\mbox{area}(T_{i})\leq2A_{i},
\]
or in other words $\|y_{i}\|\leq\frac{2A_{i}}{w_{i}}$. Claim 3 implies
that $A_{i}\leq2^{-i+i^{*}}A_{i^{*}}$ if $i>i^{*}$ and $i^{*}$
is large enough. This ends the proof of our result for case 2.

Putting together the two cases gives us the result at hand.
\end{proof}

\section{\label{sec:More-analysis}More on Algorithm \ref{alg:simple-enh-vN}}

In this section, we continue from the developments in Section \ref{sec:basic-analysis}
and elaborate on the behavior of Algorithm \ref{alg:simple-enh-vN}
by using an epigraphical and subdifferential analysis.

When $\dim(S)=2$, the intersection of $\mathbb{B}(0,\delta)\cap S$
is, up to a rotation, the intersection of a compact convex set and
the epigraph of some convex function, say $f$. This is described
in Figure \ref{fig:epiLipschitzian}. \marginpar{Remember to put this figure and the subsequent figure to latex file
to make the latex pictures render properly.}(The set $S$ is said to be epi-Lipschitzian \cite{Rockafellar79_epiL}
at $0$ in the sense of variational analysis. For more information,
see \cite{Cla83,Mor06,RW98} for example.)

\begin{figure}[!h]
\scalebox{0.45}[0.45]{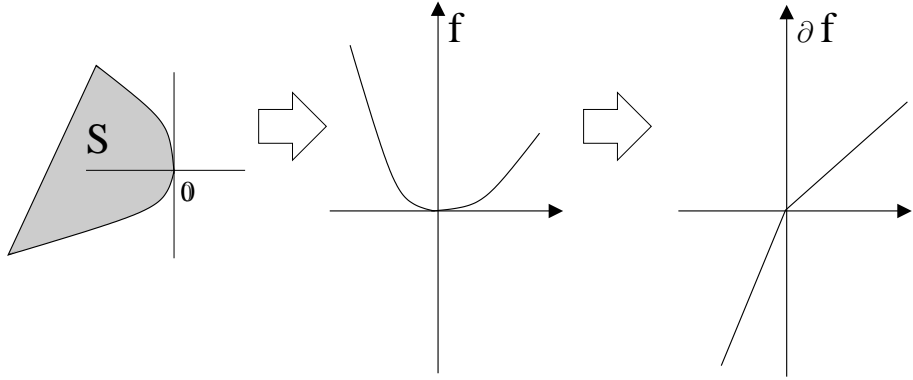}

\caption{\label{fig:epiLipschitzian} The set $S$ is epi-Lipschitzian at $0$.
Therefore, we can rotate the set $S$ so that locally at $0$, it
is the epigraph of some convex function $f$. We will need to use
the subdifferential mapping $\partial f(\cdot)$ for our later analysis.}
\end{figure}

We look at the graph of subdifferential $\partial f:\mathbb{R}\rightrightarrows\mathbb{R}$,
where ``$\rightrightarrows$'' signifies that $\partial f(\cdot)$
is a set-valued map, or in other words, $\partial f(x)$ is in general
a subset of $\mathbb{R}$. Since $f(\cdot)$ is convex, it is well-known
that the subdifferential mapping $\partial f(\cdot)$ is \emph{monotone},
i.e., if $v_{1}\in\partial f(x_{1})$, $v_{2}\in\partial f(x_{2})$
and $x_{1}\leq x_{2}$, then $v_{1}\leq v_{2}$. In view of monotonicity,
the points of discontinuity of $\partial f(\cdot)$ on an interval
is of measure zero and the function $\partial f(\cdot)$ is integrable,
i.e., 
\[
\int_{\alpha}^{\beta}\partial f(x)dx=f(\beta)-f(\alpha).
\]
We now state an algorithm expressed in terms of $f$ and $\partial f$,
and show its relationship with Algorithm \ref{alg:enhanced-vN}.
\begin{algorithm}
\label{alg:fn-bracketing}(A bracketing algorithm) For $a_{0},b_{0}>0$,
let $f:[-a_{0},b_{0}]\to\mathbb{R}$ be a convex function with a minimizer
at $0$. We want to find a minimizer of $f(\cdot)$ with the following
steps.

01$\stab$Start with $i=0$

02$\stab$Loop

03$\stab$Find a point in $[\partial f]^{-1}(\frac{f(b_{i})-f(-a_{i})}{a_{i}+b_{i}})$,
say $c_{i}$, which lies in the interval $[-a_{i},b_{i}]$. 

04$\stab$$\stab$If $c_{i}<0$, then $a_{i+1}\leftarrow-c_{i}$ and
$b_{i+1}\leftarrow b_{i}$.

05$\stab$$\stab$If $c_{i}>0$, then $a_{i+1}\leftarrow a_{i}$ and
$b_{i+1}\leftarrow c_{i}$.

06$\stab$$\stab$If $a_{i+1}+b_{i+1}$ is sufficiently small or $c_{i}=0$,
then end algorithm.

07$\stab$$\stab$$i\leftarrow i+1$

08$\stab$end loop
\end{algorithm}
At each step of Algorithm \ref{alg:fn-bracketing}, we find $a_{i}$
and $b_{i}$ such that $0\in(-a_{i},b_{i})$. Each iteration improves
either the left or right end point.

In line 3 of Algorithm \ref{alg:simple-enh-vN}, we find a minimizer
of $\min_{s\in S}y_{i}^{T}s$, where $y_{i}$ is the projection of
$0$ onto $C_{i}$. In line 3 of Algorithm \ref{alg:fn-bracketing},
we find a minimizer of $x\mapsto f(x)-[\frac{f(b_{i})-f(-a_{i})}{a_{i}+b_{i}}]^{T}x$
by finding a point $c_{i}$ such that $\frac{f(b_{i})-f(-a_{i})}{a_{i}+b_{i}}\in\partial f(c_{i})$.
It is clear to see that line 3 of both algorithms are equivalent. 

The following result shows the basic convergence of Algorithm \ref{alg:fn-bracketing}.
\begin{thm}
\label{thm:interval-analysis}(Basic convergence of Algorithm \ref{alg:fn-bracketing})
Let $\bar{a}$ and $\bar{b}$ be two positive numbers, and $f:[-\bar{a},\bar{b}]\to\mathbb{R}$.
Suppose $a^{\prime}\in[0,\bar{a}]$ and $b^{\prime}\in[0,\bar{b}]$
are such that 
\[
f(x)\begin{cases}
=0 & \mbox{if }x\in[-a^{\prime},b^{\prime}]\\
>0 & \mbox{otherwise}.
\end{cases}
\]
 Then the iterates $\{a_{i}\}_{i}$ and $\{b_{i}\}_{i}$ of Algorithm
\ref{alg:fn-bracketing} are such that $\{a_{i}\}_{i}$ and $\{b_{i}\}_{i}$
are non-increasing sequences such that for each $i$, either $a_{i+1}<a_{i}$
or $b_{i+1}<b_{i}$. Furthermore, one of these possibilities happen
\begin{enumerate}
\item Algorithm \ref{alg:fn-bracketing} finds a point in $[\partial f]^{-1}(0)$.
(i.e., a minimizer of $f$ is found.)
\item $\lim_{i\to\infty}a_{i}=a^{\prime}$ and $\lim_{i\to\infty}b_{i}=b^{\prime}$.

\end{enumerate}
\end{thm}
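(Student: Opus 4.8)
The plan is to read Algorithm \ref{alg:fn-bracketing} as a bracketing scheme for the set of minimizers $[\partial f]^{-1}(0)=[-a',b']$, exploiting the monotonicity of $\partial f(\cdot)$ throughout. First I would record the meaning of the two quantities in line 3: $m_i:=\frac{f(b_i)-f(-a_i)}{a_i+b_i}$ is the slope of the secant of $f$ over $[-a_i,b_i]$, and $c_i$ is a point where the subgradient attains this average slope, i.e.\ $m_i\in\partial f(c_i)$. Existence of such a $c_i$ in $[-a_i,b_i]$ follows from the integral representation $\int_{-a_i}^{b_i}\partial f(x)\,dx=f(b_i)-f(-a_i)=m_i(a_i+b_i)$ together with monotonicity: the (a.e.\ defined) derivative is nondecreasing and has average $m_i$, so it must cross the level $m_i$, which is the mean value statement for monotone maps.

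The first block of work is the invariant $[-a',b']\subseteq[-a_i,b_i]$ together with the monotonicity-plus-strict-decrease claim. Here I would use the sign dictionary furnished by convexity: $\partial f(x)\subseteq(-\infty,0)$ for $x<-a'$, $\partial f(x)\ni 0$ exactly on $[-a',b']$, and $\partial f(x)\subseteq(0,\infty)$ for $x>b'$. Consequently $m_i>0$ forces $c_i\ge b'\ge0$ and $m_i<0$ forces $c_i\le -a'\le0$, so the update in lines 4--5 never cuts into the flat region and the invariant is preserved by induction (the base case being $a_0=\bar a\ge a'$ and $b_0=\bar b\ge b'$). For the strict decrease I would argue that if $c_i>0$ then $c_i<b_i$: equality $c_i=b_i$ gives $m_i\in\partial f(b_i)$, but the secant slope always satisfies $m_i\le f'_-(b_i)=\min\partial f(b_i)$, so $m_i=f'_-(b_i)$, which forces $f$ affine on $[-a_i,b_i]$; since $0$ is an interior minimizer this makes $f\equiv0$ there and hence $m_i=0$, i.e.\ a minimizer has already been found. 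The symmetric argument handles $c_i<0$. This simultaneously shows that $m_i=0$ is precisely the event in which $c_i\in[\partial f]^{-1}(0)$ is a minimizer (case~(1)), and that away from this event exactly one of $a_{i+1}<a_i$, $b_{i+1}<b_i$ holds while the other coordinate is unchanged.

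To finish I would dispose of the dichotomy. If $m_i=0$ for some $i$ we are in case~(1). Otherwise $\{a_i\}$ and $\{b_i\}$ are nonincreasing and bounded below by $a'$ and $b'$, hence converge to limits $a_\infty\ge a'$ and $b_\infty\ge b'$, and I must show these are equalities. Since each iteration strictly decreases one of the two, at least one of them, say $b_i$, changes infinitely often. Along that subsequence $b_{i+1}=c_i\to b_\infty$ with $m_i\in\partial f(c_i)$, so the closedness of the graph of $\partial f$ gives $m_\infty\in\partial f(b_\infty)$, while continuity of $f$ at the (interior, or held-fixed) endpoints gives $m_\infty=\frac{f(b_\infty)-f(-a_\infty)}{a_\infty+b_\infty}$, the secant slope over $[-a_\infty,b_\infty]$. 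As in the previous paragraph, a secant slope equal to a subgradient at the right endpoint forces $f$ affine on $[-a_\infty,b_\infty]$, and the interior minimizer $0$ then forces $f\equiv0$ on this interval. Hence $[-a_\infty,b_\infty]\subseteq[\partial f]^{-1}(0)=[-a',b']$, which combined with $a_\infty\ge a'$, $b_\infty\ge b'$ yields $a_\infty=a'$ and $b_\infty=b'$; the case where $a_i$ changes infinitely often is identical with the roles reversed.

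The step I expect to be the main obstacle is this last convergence argument: showing the bracket cannot stabilize at a strictly larger interval than $[-a',b']$. The routine part is the monotonicity and the invariant; the real content is the rigidity observation that, in the limit, the secant slope coincides with a subgradient at an endpoint, which is possible only if $f$ is affine there, and affineness across the interior minimizer collapses $f$ to zero on the whole limiting bracket. Secondary care is needed for the existence of $c_i$ via the mean value property for monotone maps, for the continuity of $f$ at the limiting endpoints (legitimate because a changed endpoint lands strictly inside the domain while an unchanged one is constant), and for the kink cases where $0$ is an endpoint of $[-a',b']$, where one checks that $c_i=0$ simply coincides with finding a minimizer.
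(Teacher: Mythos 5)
Your route is genuinely different from the paper's and, in its main line, sound. The paper works directly with the endpoint subdifferential intervals $\partial f(-a_i)=[s_{i,1},s_{i,2}]$ and $\partial f(b_i)=[s_{i,3},s_{i,4}]$, shows the averaged slope lies strictly in $(s_{i,2},s_{i,3})$, and in the limit either the average hits an endpoint value (forcing $\partial f$ constant on the limiting interval, analysed by an integral argument) or stays strictly inside (contradicting the definition of the limits); it also runs a separate geometric-decay computation for the case $a'=\bar a$. Your secant-slope interpretation, the existence of $c_i$ via the mean value property for monotone maps, the invariant $[-a',b']\subseteq[-a_i,b_i]$, the strict-decrease argument (where $0$ really is interior to $[-a_i,b_i]$ since $a_i,b_i>0$ while the algorithm runs), and the rigidity step ``secant slope equal to a one-sided derivative at an endpoint forces $f$ affine on the bracket'' are all correct and arguably cleaner than the paper's bookkeeping.

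There is, however, one step that fails as written: ``affineness across the interior minimizer collapses $f$ to zero on the whole limiting bracket.'' This requires $0$ to be \emph{interior} to $[-a_\infty,b_\infty]$, and nothing you have proved prevents $a_\infty=0$ (hence $a'=0$) with $b_\infty>b'$, or the symmetric situation. In that corner case your rigidity only gives $f(x)=m_\infty x$ on $[0,b_\infty]$ with $m_\infty\ge0$, which is perfectly consistent with $b_\infty>b'=0$ when $m_\infty>0$, so the conclusion $[-a_\infty,b_\infty]\subseteq[-a',b']$ does not follow. An extra dynamical argument is needed: if $m_\infty>0$ then $m_i\to m_\infty$ forces $m_i>0\ge f'_-(0)\ge\sup\partial f(c)$ for every $c<0$ once $i$ is large, so eventually $c_i\ge0$, the left endpoint freezes at a strictly positive value, and $a_i\not\to0$ --- a contradiction; the case $m_\infty=0$ gives $f\equiv0$ on $[0,b_\infty]$ and hence $b_\infty\le b'$ as desired. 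This is exactly the sub-case the paper treats separately (its ``$b^*=0$'' branch, closed with the $\gamma$-argument showing that eventually only one endpoint can move). Your closing remark about ``kink cases where $0$ is an endpoint of $[-a',b']$'' addresses the event $c_i=0$, which terminates the algorithm and is harmless; the real issue is an endpoint of the bracket converging to $0$ without ever reaching it, and that is not covered by your argument.
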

\begin{proof}
Assume that Algorithm \ref{alg:fn-bracketing} does not encounter
a point in $[\partial f]^{-1}(0)$. We try to show that only case
(2) can happen.

When $a^{\prime}=\bar{a}$ and $b^{\prime}=\bar{b}$, then $\partial f(x)=\{0\}$
for all $x\in(-\bar{a},\bar{b})$, so case (1) must happen. When $a^{\prime}<\bar{a}$
and $b^{\prime}=\bar{b}$, Algorithm \ref{alg:fn-bracketing} applied
to $f(\cdot)$ gives equivalent iterates as Algorithm \ref{alg:fn-bracketing}
applied to $f(-\cdot)$, where the $a$'s and $b$'s swap roles, reducing
to the case where $a^{\prime}=\bar{a}$ and $b^{\prime}<\bar{b}$.
We look at two cases from here onwards. 

\textbf{Case A: $a^{\prime}=\bar{a}$ and $b^{\prime}<\bar{b}$.}

It is obvious that $\lim_{i\to\infty}a_{i}=\bar{a}=a^{\prime}$. If
case (1) is not encountered, then $b_{i}>b^{\prime}$ for all $i$.
We prove that $\lim_{i\to\infty}b_{i}=b^{\prime}$. Let $\partial f(b_{i})=[s_{i,3},s_{i,4}]$
for all $i$. Now, 
\[
s_{i}^{\prime}:=\frac{1}{\bar{a}+b_{i}}\int_{-\bar{a}}^{b_{i}}\partial f(x)dx\leq\frac{1}{\bar{a}+b_{i}}\int_{0}^{b_{i}}s_{i,3}dx\leq\frac{\bar{b}s_{i,3}}{\bar{a}+\bar{b}}.
\]
It is clear to see that $s_{i}^{\prime}\in(0,\frac{\bar{b}}{\bar{a}+\bar{b}}s_{i,3})$.
Thus $b_{i+1}=[\partial f]^{-1}(s_{i}^{\prime})$ would be such that
$b_{i+1}<b_{i}$. Since $\partial f(b_{i+1})=[s_{i+1,3},s_{i+1,4}]$,
we see that $s_{i+1,3}\leq\frac{\bar{b}}{\bar{a}+\bar{b}}s_{i,3}$,
which implies $\lim_{i\to\infty}s_{i+1,3}=0$. Thus $\lim_{i\to\infty}b_{i}=b^{\prime}$
and we are done.

\textbf{Case B: $a^{\prime}<\bar{a}$ and $b^{\prime}<\bar{b}$.}

If case (1) is not encountered, then $a_{i}>a^{\prime}$ and $b_{i}>b^{\prime}$
for all $i$. We prove that case (2) must hold.

Consider $s_{i}^{\prime}=\frac{1}{a_{i}+b_{i}}\int_{-a_{i}}^{b_{i}}\partial f(x)dx$.
Let 
\[
\partial f(-a_{i})=[s_{i,1},s_{i,2}]\mbox{ and }\partial f(b_{i})=[s_{i,3},s_{i,4}].
\]
By the monotonicity of $\partial f(\cdot)$, we have $s_{i,1}\leq s_{i,2}\leq0\leq s_{i,3}\leq s_{i,4}$.
We have $s_{i}^{\prime}\in[s_{i,2},s_{i,3}]$, and $s_{i}^{\prime}$
equals $s_{i,2}$ only if $\partial f(x)=\{s_{i,2}\}$ for all $x\in(-a_{i},b_{i})$.
This cannot happen as $a_{i}>a^{\prime}$ would ensure that $s_{i,2}<0$,
and $b_{i}>b^{\prime}$ would then imply $0\notin\partial f(0)$,
which is a contradiction. We can also argue that $s_{i}^{\prime}$
cannot be $s_{i,3}$. Thus $s_{i}^{\prime}\in(s_{i,2},s_{i,3})$.
By the workings of Algorithm \ref{alg:fn-bracketing}, we either have
$-a_{i+1}\in[\partial f]^{-1}(s_{i}^{\prime})$ or $b_{i+1}\in[\partial f]^{-1}(s_{i}^{\prime})$,
which will mean that either $a_{i+1}<a_{i}$ or $b_{i+1}<b_{i}$.
Thus the sequences $\{a_{i}\}_{i}$ and $\{b_{i}\}_{i}$ are nonincreasing,
and for each $i$, either $a_{i+1}<a_{i}$ or $b_{i+1}<b_{i}$. 

Let $b^{*}:=\lim_{i\to\infty}b_{i}$ and $a^{*}:=\lim_{i\to\infty}a_{i}$.
It is clear that $b^{*}\geq b^{\prime}$ and $a^{*}\geq a^{\prime}$.
We prove that $b^{\prime}=b^{*}$ and $a^{\prime}=a^{*}$. Let $\partial f(a^{*})=[s_{1}^{*},s_{2}^{*}]$
and $\partial f(b^{*})=[s_{3}^{*},s_{4}^{*}]$. It is clear that $s_{1}^{*}\leq s_{2}^{*}\leq0\leq s_{3}^{*}\leq s_{4}^{*}$.
If $a^{*}>a^{\prime}$, then $s_{2}^{*}<0$. Otherwise $b^{*}>b^{\prime}$
gives $s_{3}^{*}>0$. In either case, we have $s_{1}^{*}\leq s_{2}^{*}<s_{3}^{*}\leq s_{4}^{*}$.
Now, 
\[
\lim_{i\to\infty}\frac{1}{a_{i}+b_{i}}\int_{-a_{i}}^{b_{i}}\partial f(x)dx=\frac{1}{a^{*}+b^{*}}\int_{-a^{*}}^{b^{*}}\partial f(x)dx.
\]
Since either $a^{*}>a^{\prime}\geq0$ or $b^{*}>b^{\prime}\geq0$,
we have $a^{*}+b^{*}>0$, so the limit above is well defined. Let
this limit be $s^{*}$. It is clear that $s^{*}\in[s_{2}^{*},s_{3}^{*}]$. 

\textbf{Claim: If $s^{*}\in\{s_{2}^{*},s_{3}^{*}\}$, then $a^{*}=a^{\prime}$
and $b^{*}=b^{\prime}$.}

Consider the case when $s^{*}=s_{2}^{*}$. We must have 
\begin{equation}
\partial f(x)=\{s_{2}^{*}\}\mbox{ for all }x\in(-a^{*},b^{*}).\label{eq:partial-f-x-s-2-1}
\end{equation}
If $b^{*}>0$, then the inequality $s_{2}^{*}\leq0$ and $\partial f(x)\in[0,\infty)$
for all $x\in(0,b^{*})$ forces $s_{2}^{*}=0$, which gives $b^{*}\leq b^{\prime}$,
and in turn $b^{*}=b^{\prime}$. We are left with showing that $a^{*}=a^{\prime}$. 

Seeking a contradiction, suppose $a^{*}=a^{\prime}$. Recall that
this implies $s_{2}^{*}<0$. Since $\partial f(x)\in[0,\infty)$ for
all $x\in(0,b^{*})$, \eqref{eq:partial-f-x-s-2-1} implies $b^{*}=0$.
So $\partial f(x)=\{s_{2}^{*}\}$ for all $x\in(-a^{*},0)$. Let $\gamma>0$
be such that 
\[
\int_{-a^{*}}^{\gamma}\partial f(x)dx=a^{*}s_{2}^{*}+\int_{0}^{\gamma}\partial f(x)dx<0.
\]
The local Lipschitz continuity of $f(\cdot)$ at $0$ implies that
$\partial f(\cdot)$ is locally bounded at $0$, so such a $\gamma$
must exist. If $b_{i}<\gamma$, then $\int_{-a_{i}}^{b_{i}}\partial f(x)dx\leq\int_{-a^{*}}^{\gamma}\partial f(x)dx<0$,
so $[\partial f]^{-1}(\frac{1}{a_{i}+b_{i}}\int_{-a_{i}}^{b_{i}}\partial f(x)dx)<0$.
This means that only $a_{i}$ would decrease and $b_{i}$ would remain
constant, contradicting the fact that $b^{*}=\lim_{i\to\infty}b_{i}=0$.
This ends the proof of our claim when $s^{*}=s_{2}^{*}$. The case
when $s^{*}=s_{3}^{*}$ is similar. This ends the proof of our claim.

Recalling the situation before our claim, we have $s^{*}\in(s_{2}^{*},s_{3}^{*})$.
This means that either $-a_{i+1}\in(-a^{*},b^{*})$ or $b_{i+1}\in(-a^{*},b^{*})$
for $i$ large enough, which contradicts the definition of $a^{*}$
and $b^{*}$. So $b^{\prime}=b^{*}$ and $a^{\prime}=a^{*}$ as needed.
\end{proof}
Theorem \ref{thm:interval-analysis} shows that if $0\notin\partial f(-a_{i})$
and $0\notin\partial f(b_{i})$ for all $i$, the only situation when
the iterates $\{a_{i}\}_{i}$ and $\{b_{i}\}_{i}$ of Algorithm \ref{alg:fn-bracketing}
do not both converge to zero is when both $a^{\prime}=\lim_{i\to\infty}a_{i}$
and $b^{\prime}=\lim_{i\to\infty}b_{i}$ are such that $-a^{\prime}$
and $b^{\prime}$ minimize $f(\cdot)$, and $0\in[-a^{\prime},b^{\prime}]$.
When $0\in\partial f(-a_{i})$ or $0\in\partial f(b_{i})$ for some
iterate $a_{i}>0$ or $b_{i}>0$, we can assume without loss of generality
that $0\in\partial f(-a_{i})$. Algorithm \ref{alg:fn-bracketing}
would continue with the iterates $a_{i}$ staying put, and $\{b_{i}\}_{i}$
strictly decreasing to a minimizer of $f(\cdot)$. The point $0$
would lie in $[-a^{\prime},b^{\prime}]$.

The observation in the last paragraph shows the following behavior
of Algorithm \ref{alg:simple-enh-vN}: When there is a nontrivial
line segment on $\partial S$ such that $0$ lies somewhere on the
line segment, the cluster points of the iterates $\{s_{i}\}_{i}$
of Algorithm \ref{alg:simple-enh-vN} will land on the line segment.
Furthermore, $0$ lies in the convex hull of the cluster points of
$\{s_{i}\}_{i}$.

There is no fixed behavior of the iterates $\{a_{i}\}_{i}$ and $\{b_{i}\}_{i}$
of Algorithm \ref{alg:fn-bracketing}, as the following result shows.
\begin{prop}
\label{prop:any-sequence}(Arbitrary decrease in width) Let $\bar{a}$
and $\bar{b}$ be two positive numbers, and $f:[-\bar{a},\bar{b}]\to\mathbb{R}$.
Let the nonincreasing, nonnegative sequences $\{a_{i}\}_{i}$ and
$\{b_{i}\}_{i}$ be such that 
\begin{enumerate}
\item $a_{0}=\bar{a}$ and $b_{0}=\bar{b}$, and 
\item For each $i$, either $a_{i+1}<a_{i}$ and $b_{i+1}=b_{i}$, or $a_{i+1}=a_{i}$
and $b_{i+1}<b_{i}$. 
\end{enumerate}
We can choose a proper convex function $f:[-\bar{a},\bar{b}]\to\mathbb{R}$
such that Algorithm \ref{alg:fn-bracketing} generates the iterates
$\{a_{i}\}_{i}$ and $\{b_{i}\}_{i}$. \end{prop}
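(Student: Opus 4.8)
The plan is to construct, by hand, a proper piecewise-linear convex $f$ whose subdifferential graph is tailored so that Algorithm \ref{alg:fn-bracketing}, run on $f$, is forced to select exactly the prescribed points. First I would observe that the data already fix the locations the algorithm must select: since exactly one endpoint changes at each step, the point $c_{i}$ produced in line~3 must equal $-a_{i+1}$ (when the left endpoint drops) or $b_{i+1}$ (when the right endpoint drops). Writing $a^{*}=\lim_{i}a_{i}$ and $b^{*}=\lim_{i}b_{i}$, I would let $f$ vanish on the ``core'' $[-a^{*},b^{*}]$ (which contains $0$, so that $0$ is a minimizer as Algorithm \ref{alg:fn-bracketing} requires) and be piecewise linear with breakpoints exactly at these selection points, accumulating at $-a^{*}$ and $b^{*}$. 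Then $\partial f$ is a step function, and if I arrange the secant slope $m_{i}:=\frac{f(b_{i})-f(-a_{i})}{a_{i}+b_{i}}$ to fall strictly inside the jump of $\partial f$ at $c_{i}$, the monotonicity of $\partial f$ gives $[\partial f]^{-1}(m_{i})=\{c_{i}\}$, which forces the selection in line~3 and reproduces $(a_{i+1},b_{i+1})$. (The argument is uniform whether the sequences are finite or infinite.)

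Second, I would record the sign constraint this imposes. Since $\partial f<0$ to the left of the core and $\partial f>0$ to its right, forcing a left step requires $m_{i}<0$, i.e.\ $f(-a_{i})>f(b_{i})$, while forcing a right step requires $f(b_{i})>f(-a_{i})$; in words, at every step the endpoint that is to be dropped must be the strictly higher of the two. The prescribed left/right pattern must therefore be realized by a choice of boundary values in which we always drop the higher endpoint. I would secure this through the elementary observation that repeatedly dropping the larger of the two heads of two decreasing sequences produces the dropped values in strictly decreasing order: assign to each endpoint the value $v_{t}$ at the step $t$ at which it is dropped, where $v_{0}>v_{1}>\cdots\to 0$ is any fixed null sequence, and assign the value $0$ to any endpoint that is never dropped (these sit on the core). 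Then every required comparison holds automatically, because the partner endpoint in each comparison is dropped strictly later (hence has a strictly smaller value) or lies on the core (value $0$).

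Finally I would upgrade this value assignment to a genuine convex $f$ and secure the strict sandwich. Because the breakpoint locations are fixed, choosing the values is the same as choosing the successive slopes, and two requirements must hold together: convexity (slopes increasing from left to right, equivalently, on each side the slopes decreasing in absolute value toward the core) and the sandwich of $m_{i}$ strictly between the two slopes meeting at $c_{i}$. I would carry this out inductively over the steps, which create the breakpoints from the outside inward: at the step creating a new breakpoint the secant slope $m_{i}$ is already determined by the previously fixed outer values, so it only remains to choose the new interior value, equivalently the new piece's slope, in the open window between $m_{i}$ and the most recently fixed slope on that side. The key point is that this window is nonempty, which I would obtain by taking each freshly assigned interior value small enough to keep the successive same-side secant slopes non-increasing, so that each new $m_{i}$ stays below the previous slope and leaves room for the new one.

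The main obstacle is exactly this last coupling: $m_{i}$ depends on values fixed at earlier steps, so one cannot choose the signs (which side is higher) and the magnitudes (convexity and the strict sandwich) independently. The resolution is the two-part argument above: the ``drop the larger head'' picture guarantees that the sign pattern can be met with values decreasing to $0$, while the freedom to make each newly assigned interior value as small as desired keeps both the convexity and the sandwich windows nonempty and keeps $m_{i}$ on the correct side of $0$ so that it is trapped inside a jump rather than landing on a flat piece. I would close by noting that the construction yields a finite-valued, piecewise-linear, hence proper convex $f$ on $[-\bar{a},\bar{b}]$ with a minimizer at $0$, consistent with Theorem \ref{thm:interval-analysis}, whose run of Algorithm \ref{alg:fn-bracketing} is precisely $\{a_{i}\}_{i}$ and $\{b_{i}\}_{i}$.
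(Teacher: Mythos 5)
Your construction is essentially the paper's: a piecewise-linear convex function vanishing on the limiting interval $[-a^{*},b^{*}]$, with breakpoints exactly at the prescribed endpoints, whose successive slopes are chosen inductively (small enough in magnitude) so that each secant slope $\frac{f(b_{i})-f(-a_{i})}{a_{i}+b_{i}}$ is trapped strictly inside the subdifferential jump at the next prescribed breakpoint. The paper parametrizes directly by the slopes $-\alpha_{i},\beta_{i}$ together with a decreasing envelope $\gamma_{i}$ bounding all later slopes (its conditions (A)--(D)), whereas you parametrize by the endpoint values taken in drop order; these are equivalent, and your inductive feasibility argument plays the same role as the paper's.
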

\begin{proof}
Let $a^{\prime}:=\lim_{i\to\infty}a_{i}$ and $b^{\prime}:=\lim_{i\to\infty}b_{i}$.
Define $f(\cdot)$ to be zero on $[-a^{\prime},b^{\prime}]$. We now
define $f(\cdot)$ on the rest of $[-\bar{a},\bar{b}]$. Construct
the sequences of nonincreasing positive numbers $\{\alpha_{i}\}_{i}$,
$\{\beta_{i}\}_{i}$ and $\{\gamma_{i}\}_{i}$ satisfying the following
rules:
\begin{enumerate}
\item [(A)]If $a_{i+1}<a_{i}$ and $b_{i+1}=b_{i}$, then \textrm{$[a_{i+1}+b_{i}]\gamma_{i}+[a_{i}-a_{i+1}][-\alpha_{i}]<[a_{i}+b_{i}][-\gamma_{i}]$}
\item [(B)]If $a_{i+1}=a_{i}$ and $b_{i+1}<b_{i}$, then \textrm{$[a_{i}+b_{i+1}][-\gamma_{i}]+[b_{i}-b_{i+1}][\beta_{i}]>[a_{i}+b_{i}][\gamma_{i}]$}
\item [(C)]$\alpha_{i+1}=\beta_{i+1}=\gamma_{i}$ and $\gamma_{i+1}\leq\gamma_{i}$
for all $i\geq0$.
\item [(D)]$\alpha_{0}=\beta_{0}=1$.
\end{enumerate}
We can construct the sequences inductively with $\alpha_{0}$ and
$\beta_{0}$ defined through (D), $\gamma_{i}$ defined by $\alpha_{i}$
and $\beta_{i}$ through (A) and (B), and $\alpha_{i+1}$ and $\beta_{i+1}$
defined by $\gamma_{i}$ through (C). Define $\partial f(\cdot)$
by 
\[
\partial f(x):=\begin{cases}
\{-\alpha_{i}\} & \mbox{ if }x\in(-a_{i},-a_{i+1})\\
\{\beta_{i}\} & \mbox{ if }x\in(b_{i+1},b_{i}).
\end{cases}
\]
The function $f(\cdot)$ can be inferred from $f(x)=\int_{0}^{x}\partial f(x)dx$
since the monotone function $\partial f(\cdot)$ is integrable. We
now verify that Algorithm \ref{alg:fn-bracketing} applied to $f(\cdot)$
generates the sequence $\{a_{i}\}_{i}$ and $\{b_{i}\}_{i}$. We first
look at the case where $a_{i+1}<a_{i}$ and $b_{i+1}=b_{i}$. Here,
\begin{eqnarray*}
\int_{-a_{i}}^{b_{i}}\partial f(x)dx & = & \int_{-a_{i+1}}^{b_{i}}\partial f(x)dx+\int_{-a_{i}}^{-a_{i+1}}\partial f(x)dx\\
 & \leq & [a_{i+1}+b_{i}]\gamma_{i}+[a_{i}-a_{i+1}][-\alpha_{i}]\\
 & < & [a_{i}+b_{i}][-\gamma_{i}]\\
\Rightarrow\frac{1}{a_{i}+b_{i}}\int_{-a_{i}}^{b_{i}}\partial f(x)dx & < & -\gamma_{i}.
\end{eqnarray*}
It is clear that $\frac{1}{a_{i}+b_{i}}\int_{-a_{i}}^{b_{i}}\partial f(x)dx>-\alpha_{i}$.
Since condition (C) implies that $\partial f(x)\in[-\gamma_{i},\gamma_{i}]$
for all $x\in(-a_{i+1},b_{i+1})$, this implies that $[\partial f]^{-1}(\frac{1}{a_{i}+b_{i}}\int_{-a_{i}}^{b_{i}}\partial f(x)dx)=-a_{i+1}$.
This means that from the end points $-a_{i}$ and $b_{i}$ at iteration
$i$, the next endpoints are indeed $-a_{i+1}$ and $b_{i}$ as claimed.
The case when $a_{i+1}=a_{i}$ and $b_{i+1}<b_{i}$ is similar.
\end{proof}
Even though Proposition \ref{prop:any-sequence} shows that the width
of the intervals can decrease at any rate in Algorithm \ref{alg:fn-bracketing},
the proof of case 2 in Theorem \ref{thm:lin-conv-best} shows that
$\{\|y_{i}\|\}_{i}$ converges quickly. 

We give conditions such that the width of the intervals in Algorithm
\ref{alg:fn-bracketing} decreases at a linear rate.
\begin{thm}
\label{thm:lin-conv-1}(Bracketing in Algorithm \ref{alg:fn-bracketing})
Let $f:\mathbb{R}\to\mathbb{R}$ be a convex function such that $f(0)=0$
and $f(\cdot)$ is differentiable at $0$ with $f^{\prime}(0)=0$
in Algorithm \ref{alg:fn-bracketing}. Suppose further that $\partial f(\cdot)$
has left derivative $f_{-}^{\prime\prime}(0)$ and right derivative
$f_{+}^{\prime\prime}(0)$ which are formally defined as 
\begin{equation}
f_{-}^{\prime\prime}(0):=\lim_{t\searrow0,v\in\partial f(-t)}\frac{1}{-t}[v-f^{\prime}(0)]\mbox{ and }f_{+}^{\prime\prime}(0):=\lim_{t\searrow0,v\in\partial f(t)}\frac{1}{t}[v-f^{\prime}(0)].\label{eq:LR-derv-subdiff}
\end{equation}
 In view of the convexity of $f$ (i.e. monotonicity of $\partial f(\cdot)$),
we have $f_{-}^{\prime\prime}(0)\geq0$ and $f_{+}^{\prime\prime}(0)\geq0$.
Suppose $f_{-}^{\prime\prime}(0)>0$ and $f_{+}^{\prime\prime}(0)>0$.
Then the width of the interval $[-a_{i},b_{i}]$, easily seen to be
$a_{i}+b_{i}$, decreases at a linear rate.
\end{thm}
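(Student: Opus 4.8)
The plan is to exploit the fact that $f_{-}^{\prime\prime}(0)>0$ and $f_{+}^{\prime\prime}(0)>0$ force $f$ to behave like a genuine (two-sided, possibly asymmetric) quadratic near its minimizer $0$, so that each iteration of Algorithm \ref{alg:fn-bracketing} reduces whichever endpoint is currently dominant by a factor close to one half. First I would record the setup: since $f_{\pm}^{\prime\prime}(0)>0$, the point $0$ is the unique minimizer of $f$, so in the notation of Theorem \ref{thm:interval-analysis} we have $a^{\prime}=b^{\prime}=0$; hence that theorem guarantees $a_{i}\to0$ and $b_{i}\to0$ (the alternative being finite termination at a minimizer, for which there is nothing to prove). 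This reduces everything to the regime of small $a_{i},b_{i}$, where local linearizations of $\partial f$ are available.

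The technical heart is a one-step contraction estimate. Writing $s_{i}:=\frac{f(b_{i})-f(-a_{i})}{a_{i}+b_{i}}=\frac{1}{a_{i}+b_{i}}\int_{-a_{i}}^{b_{i}}\partial f(x)\,dx$ for the slope used in line 3, I would fix a small $\epsilon>0$ and use the definitions \eqref{eq:LR-derv-subdiff} to choose $\delta>0$ so that $\partial f(x)\subset[(f_{+}^{\prime\prime}(0)-\epsilon)x,(f_{+}^{\prime\prime}(0)+\epsilon)x]$ for $0<x<\delta$, together with the mirror-image inclusion for $-\delta<x<0$. Suppose $i$ is large enough that $a_{i},b_{i}<\delta$ and that $s_{i}>0$ (so $c_{i}=b_{i+1}>0$ and $a_{i+1}=a_{i}$). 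Since $\partial f\le0$ on $(-a_{i},0)$, one gets the upper bound $s_{i}\le\frac{1}{b_{i}}\int_{0}^{b_{i}}\partial f(x)\,dx\le\frac{f_{+}^{\prime\prime}(0)+\epsilon}{2}b_{i}$; on the other hand $s_{i}\in\partial f(b_{i+1})$ gives the lower bound $s_{i}\ge(f_{+}^{\prime\prime}(0)-\epsilon)b_{i+1}$. Combining the two yields $b_{i+1}\le\rho^{\prime}b_{i}$ with $\rho^{\prime}=\frac{f_{+}^{\prime\prime}(0)+\epsilon}{2(f_{+}^{\prime\prime}(0)-\epsilon)}$, which is strictly less than $1$ once $\epsilon$ is small (and tends to $\frac{1}{2}$ as $\epsilon\to0$). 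The symmetric computation shows that when $s_{i}<0$ the other endpoint contracts, $a_{i+1}\le\rho^{\prime\prime}a_{i}$, with $\rho^{\prime\prime}<1$.

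The remaining point is that contracting only one endpoint per step must still shrink the full width $a_{i}+b_{i}$; this is where one must rule out the algorithm wasting a step on an already-tiny endpoint. The key observation is that the sign of $s_{i}$ selects the dominant endpoint: the same quadratic bounds give the upper estimate $\int_{-a_{i}}^{b_{i}}\partial f(x)\,dx\le\frac{f_{+}^{\prime\prime}(0)+\epsilon}{2}b_{i}^{2}-\frac{f_{-}^{\prime\prime}(0)-\epsilon}{2}a_{i}^{2}$, so $s_{i}>0$ forces $b_{i}\ge\kappa\,a_{i}$ for a fixed $\kappa>0$ (and symmetrically for $s_{i}<0$). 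I would then introduce the weighted potential $\Phi_{i}:=\sqrt{f_{-}^{\prime\prime}(0)}\,a_{i}+\sqrt{f_{+}^{\prime\prime}(0)}\,b_{i}$, which is comparable to $a_{i}+b_{i}$ up to fixed constants. In the case $s_{i}>0$ we have $\Phi_{i+1}\le\Phi_{i}-\sqrt{f_{+}^{\prime\prime}(0)}\,(1-\rho^{\prime})b_{i}$, and the bound $b_{i}\ge\kappa a_{i}$ lets me absorb the subtracted term as a fixed fraction of $\Phi_{i}$, giving $\Phi_{i+1}\le\rho\,\Phi_{i}$ for some $\rho<1$; the case $s_{i}<0$ is identical after swapping roles. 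Iterating $\Phi_{i+1}\le\rho\Phi_{i}$ and using the comparability of $\Phi_{i}$ with $a_{i}+b_{i}$ then delivers the claimed linear decrease of the width.

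The main obstacle I expect is the middle step: making the one-sided contraction rigorous rather than heuristic. The estimate $b_{i+1}\approx\frac{1}{2}b_{i}$ is exact for a pure quadratic but only approximate for general $f$, so the error terms from replacing $\partial f(x)$ by its linearization must be controlled uniformly --- which is precisely why one first needs $a_{i},b_{i}\to0$ from Theorem \ref{thm:interval-analysis}, and why the contraction factor emerges as something slightly larger than $\frac{1}{2}$ rather than exactly $\frac{1}{2}$. A secondary subtlety is the choice of weights in $\Phi_{i}$ together with the sign-versus-dominance dichotomy, so that both cases contract simultaneously; the weights $\sqrt{f_{\pm}^{\prime\prime}(0)}$ are natural because they symmetrize the asymmetric quadratic and make the threshold between the two cases coincide (up to the $\epsilon$-error) with the balance $\sqrt{f_{+}^{\prime\prime}(0)}\,b_{i}=\sqrt{f_{-}^{\prime\prime}(0)}\,a_{i}$.
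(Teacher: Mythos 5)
Your proposal is correct, and its analytical core coincides with the paper's: both use the hypothesis $f_{\pm}^{\prime\prime}(0)>0$ to sandwich $\partial f$ between linear envelopes with slopes $f_{\pm}^{\prime\prime}(0)\mp\epsilon$ and $f_{\pm}^{\prime\prime}(0)\pm\epsilon$ on a small interval $[-\delta,\delta]$, bound the averaged slope $s_{i}$ by integrating one envelope, and then locate $c_{i}$ by inverting the other. Where you genuinely diverge is in converting the one-sided update into a contraction of the full width. The paper computes the ratio of consecutive widths directly from the explicit formula for $h_{L}^{-1}(v_{U})$ (resp.\ $h_{U}^{-1}(v_{L})$), arriving at a per-step factor of roughly $\tfrac{1}{2}+\tfrac{1}{2}\max(0,1-t)$ with $t=f_{-}^{\prime\prime}(0)/f_{+}^{\prime\prime}(0)$ and splitting into cases on the sign of $1-t$. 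You instead isolate two facts --- the updated endpoint contracts by a factor near $\tfrac{1}{2}$, and the sign of $s_{i}$ certifies that the updated endpoint is dominant in the sense $f_{+}^{\prime\prime}(0)b_{i}^{2}\gtrsim f_{-}^{\prime\prime}(0)a_{i}^{2}$ --- and combine them through the weighted potential $\Phi_{i}=\sqrt{f_{-}^{\prime\prime}(0)}\,a_{i}+\sqrt{f_{+}^{\prime\prime}(0)}\,b_{i}$. This buys a cleaner case analysis: the paper's passage to its starred bound tacitly assumes the coefficient $1-g_{l,l}/g_{r,l}$ is nonnegative (when it is negative one should simply drop that term), whereas your dominance argument sidesteps the sign issue entirely; moreover your explicit appeal to Theorem \ref{thm:interval-analysis} to secure $a_{i},b_{i}\to0$ before invoking the local envelopes is a step the paper leaves implicit. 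One cosmetic remark: the potential is not strictly needed, since your two facts already give $\frac{a_{i+1}+b_{i+1}}{a_{i}+b_{i}}\leq1-(1-\rho^{\prime})\frac{\kappa}{1+\kappa}<1$ directly, which matches the paper's statement about the width itself.
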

The formulas $f_{-}^{\prime\prime}(0)$ and $f_{+}^{\prime\prime}(0)$
are defined by \eqref{eq:LR-derv-subdiff} instead of 
\[
f_{-}^{\prime\prime}(0)=\lim_{t\searrow0}\frac{1}{-t}[f^{\prime}(-t)-f^{\prime}(0)]\mbox{ and }f_{+}^{\prime\prime}(0):=\lim_{t\searrow0}\frac{1}{t}[f^{\prime}(t)-f^{\prime}(0)],
\]
 because \eqref{eq:LR-derv-subdiff} does not require the differentiability
of $f(\cdot)$ in a neighborhood of $0$ and is more general. We now
prove Theorem \ref{thm:lin-conv-1}.
\begin{proof}
In view of the existence of the limits in \eqref{eq:LR-derv-subdiff},
for any constants $g_{l,l}$ and $g_{l,u}$ such that $0<g_{l,l}<f_{-}^{\prime\prime}(0)<g_{l,u}$,
we can find $\delta>0$ such that if $x\in[-\delta,0)$, then $\partial f(x)\subset[-|x|g_{l,u},-|x|g_{l,l}]$.
Similarly, for any constants $g_{r,l}$ and $g_{r,u}$ such that $0<g_{r,l}<f_{+}^{\prime\prime}(0)<g_{r,u}$,
we can reduce $\delta>0$ if necessary so that if $x\in(0,\delta]$,
then $\partial f(x)\subset[xg_{r,l},xg_{r,u}]$. We shall also assume
that 
\begin{equation}
\frac{g_{l,u}}{g_{l,l}}<R\mbox{ and }\frac{g_{r,u}}{g_{r,l}}<R\mbox{, where }R\mbox{ can be made arbitrarily close to }1.\label{eq:ratio-R-formula}
\end{equation}

After one iteration, the interval $[-a_{i},b_{i}]$ becomes either
$[c_{i},b_{i}]$ or $[-a_{i},c_{i}]$, depending on the sign of $c_{i}$.
We now try to find upper and lower bounds on $c_{i}$. Suppose $a_{i}\in(0,\delta]$
and $b_{i}\in(0,\delta]$. Then the graph of $\partial f(\cdot)$
on $[-\delta,\delta]$ is bounded from below by the piecewise linear
function $h_{L}:[-\delta,\delta]\to\mathbb{R}$ and the from above
by the piecewise linear function $h_{U}:[-\delta,\delta]\to\mathbb{R}$
(See Figure \ref{fig:wedge}) defined respectively by 
\[
h_{L}(x):=\begin{cases}
g_{l,u}x & \mbox{ if }x\in[-\delta,0]\\
g_{r,l}x & \mbox{ if }x\in[0,\delta]
\end{cases}\mbox{ and }h_{U}(x):=\begin{cases}
g_{l,l}x & \mbox{ if }x\in[-\delta,0]\\
g_{r,u}x & \mbox{ if }x\in[0,\delta].
\end{cases}
\]

\begin{figure}[!h]
\scalebox{0.5}[0.5]{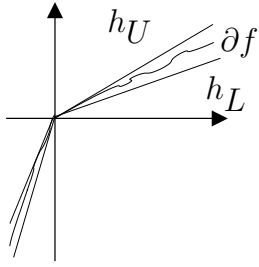}\caption{\label{fig:wedge}Illustration of how $h_{L}(\cdot)$ and $h_{U}(\cdot)$
compare to $\partial f(\cdot)$. }

\end{figure}

We now estimate an upper bound on $c$. An upper bound on $\frac{1}{a_{i}+b_{i}}[f(b_{i})-f(-a_{i})]$,
which equals $\frac{1}{a_{i}+b_{i}}\int_{-a_{i}}^{b_{i}}f^{\prime}(x)dx$
since $f^{\prime}(\cdot)$ is integrable, is $v_{U}:=\frac{1}{a_{i}+b_{i}}\int_{-a_{i}}^{b_{i}}h_{U}(x)dx$.
Since $h_{L}(\cdot)\leq\partial f(\cdot)$, $c_{i}\leq h_{L}^{-1}(v_{U})$.
We now proceed to calculate these values.

We can calculate that $v_{U}:=\frac{1}{a_{i}+b_{i}}[-\frac{1}{2}g_{l,l}a_{i}^{2}+\frac{1}{2}g_{r,u}b_{i}^{2}]$.
We are interested in the upper bound of $c_{i}$ in the case when
the next interval is $[-a_{i},c_{i}]$, so we only consider the case
where $v_{U}>0$. In this case, $h_{L}^{-1}(v_{U})=\frac{1}{g_{r,l}[a_{i}+b_{i}]}[-\frac{1}{2}g_{l,l}a_{i}^{2}+\frac{1}{2}g_{r,u}b_{i}^{2}]$.
The width of the interval $[-a_{i},c_{i}]$ divided by the width of
$[a_{i},b_{i}]$ is estimated as follows. 
\begin{eqnarray*}
\frac{a_{i}+c_{i}}{a_{i}+b_{i}} & \leq & \frac{a_{i}+h_{L}^{-1}(v_{U})}{a_{i}+b_{i}}\\
 & = & \frac{2g_{r,l}[a_{i}+b_{i}]a_{i}+[-g_{l,l}a_{i}^{2}+g_{r,u}b_{i}^{2}]}{2g_{r,l}[a_{i}+b_{i}]^{2}}\\
 & = & \frac{1}{2}+\frac{[g_{r,l}-g_{l,l}]a_{i}^{2}+[g_{r,u}-g_{r,l}]b_{i}^{2}}{2g_{r,l}[a_{i}+b_{i}]^{2}}\\
 & \leq & \frac{1}{2}+\frac{[1-\frac{g_{l,l}}{g_{r,l}}]a_{i}^{2}+[R-1]b_{i}^{2}}{2[a_{i}+b_{i}]^{2}}\\
 & \leq & \frac{1}{2}+\frac{1}{2}\underbrace{\left(\left[1-\frac{g_{l,l}}{g_{r,l}}\right]+[R-1]\right)}_{(*)},
\end{eqnarray*}
where the ratio $R$ is as defined in \eqref{eq:ratio-R-formula}.
The term $[R-1]$ can be made arbitrarily close to zero. The term
$\frac{g_{l,l}}{g_{r,l}}$ can be made arbitrarily close to $t:=f_{-}^{\prime\prime}(0)/f_{+}^{\prime\prime}(0)$.
In other words, $[1-\frac{g_{l,l}}{g_{r,l}}]$ can be arbitrarily
close to $[1-t]$. If $[1-t]<0$, then with proper choices of $g_{l,l}$,
$g_{r,l}$, $g_{l,u}$ and $g_{r,u}$, we can make $(*)$ negative,
in which case the ratio $\frac{a_{i}+c_{i}}{a_{i}+b_{i}}$ is less
than $1/2$. If $[1-t]\geq0$, we still have $[1-t]<1$, so with the
proper choice of constants, we can ensure that $\frac{a_{i}+c_{i}}{a_{i}+b_{i}}\leq\frac{3}{4}+\frac{1}{4}[1-t]$,
which still ensures that the reduction of the width of the intervals
is still linear.

The calculations for finding a lower bound on $c_{i}$ is similar.
The lower bound is of interest when the next interval is $[c_{i},b_{i}]$,
and that $c_{i}<0$. Thus $c_{i}<h_{U}^{-1}(v_{L})$, where $v_{L}:=\frac{1}{a_{i}+b_{i}}[-\frac{1}{2}g_{l,u}a_{i}^{2}+\frac{1}{2}g_{r,l}b_{i}^{2}]$.
So  
\begin{eqnarray*}
\frac{-c_{i}+b_{i}}{a_{i}+b_{i}} & \leq & \frac{-h_{U}^{-1}(v_{L})+b_{i}}{a_{i}+b_{i}}\\
 & = & \frac{1}{2}+\frac{[g_{l,u}-g_{l,l}]a_{i}^{2}+[g_{l,l}-g_{r,l}]b_{i}^{2}}{2g_{l,l}[a_{i}+b_{i}]^{2}}\\
 & \leq & \frac{1}{2}+\frac{[R-1]a_{i}^{2}+[1-\frac{g_{r,l}}{g_{l,l}}]b_{i}^{2}}{2[a_{i}+b_{i}]^{2}}\\
 & \leq & \frac{1}{2}+\frac{1}{2}\left([R-1]+\left[1-\frac{g_{r,l}}{g_{l,l}}\right]\right).
\end{eqnarray*}
Once again, the ratio is $\frac{g_{r,l}}{g_{l,l}}$ can be chosen
arbitrarily close to $1/t$, where $t$ was as defined earlier. If
$[1-\frac{1}{t}]<0$, we will have $\frac{-c_{i}+b_{i}}{a_{i}+b_{i}}<\frac{1}{2}$
eventually. If $[1-\frac{1}{t}]>0$, we still have $[1-\frac{1}{t}]<1$,
in which case we can ensure that $\frac{a_{i}+c_{i}}{a_{i}+b_{i}}\leq\frac{3}{4}+\frac{1}{4}[1-\frac{1}{t}]$.
No matter the case, we have a linear rate of convergence of the width
of the intervals to zero.\end{proof}
\begin{cor}
\label{cor:lin-conv-2}(Linear convergence of Algorithm \ref{alg:fn-bracketing})
With the additional assumptions in Theorem \ref{thm:lin-conv-1},
the iterates of Algorithm \ref{alg:fn-bracketing} are such that the
sequence 
\begin{equation}
\big\{ d\big((0,0),\{(-a_{i},f(-a_{i})),(b_{i},f(b_{i}))\}\big)\big\}_{i},\label{eq:min-formula}
\end{equation}
where the distance in $\mathbb{R}^{2}$ is measured by the 2-norm,
is bounded above by a linearly convergence sequence. The corresponding
sequence $\{\|y_{i}\|\}_{i}$ in Algorithm \ref{alg:simple-enh-vN}
is bounded by a linearly convergent sequence. \end{cor}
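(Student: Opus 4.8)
The plan is to reduce everything to the linear decay of the bracket width $a_i+b_i$ furnished by Theorem \ref{thm:lin-conv-1}. First I would unwind the notation in \eqref{eq:min-formula}: since the braces denote the two-point set, the quantity there is
\[
d\big((0,0),\{(-a_i,f(-a_i)),(b_i,f(b_i))\}\big)=\min\Big\{\sqrt{a_i^2+f(-a_i)^2},\ \sqrt{b_i^2+f(b_i)^2}\Big\}.
\]
To bound this from above it suffices to control either one of the two square-root terms, so the task decouples into handling $a_i$ together with $f(-a_i)$, and $b_i$ together with $f(b_i)$.

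Next I would note that the hypotheses $f_-^{\prime\prime}(0)>0$ and $f_+^{\prime\prime}(0)>0$ force the minimizer of $f$ to be the single point $0$ (so $a^\prime=b^\prime=0$ in the language of Theorem \ref{thm:interval-analysis}), whence $a_i,b_i\to0$. By Theorem \ref{thm:lin-conv-1} the width $a_i+b_i$ decreases at a linear rate, i.e.\ $a_i+b_i\le C\rho^i$ for some $C>0$ and $\rho\in(0,1)$; since $a_i,b_i\ge0$, each of $a_i$ and $b_i$ is itself bounded by $C\rho^i$. The mere \emph{existence} of the one-sided second derivatives gives $\partial f(t)=O(|t|)$ near $0$, which together with $f(0)=f^\prime(0)=0$ yields a quadratic bound $|f(\pm t)|\le C^\prime t^2$ for all small $t$, so $|f(-a_i)|\le C^\prime a_i^2$ and $|f(b_i)|\le C^\prime b_i^2$ for $i$ large. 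Using $\sqrt{1+x}\le 1+x$ for $x\ge0$, I then get
\[
\sqrt{a_i^2+f(-a_i)^2}\le a_i\sqrt{1+(C^\prime)^2a_i^2}\le a_i\big(1+(C^\prime)^2a_i^2\big)\le C\rho^i\big(1+(C^\prime)^2C^2\rho^{2i}\big),
\]
and the right-hand side is dominated by the linearly convergent sequence $\big(C+C^3(C^\prime)^2\big)\rho^i$ (since $\rho^{3i}\le\rho^i$). The same estimate holds for the $b_i$ term, so taking the minimum shows that the sequence \eqref{eq:min-formula} is bounded above by a linearly convergent sequence.

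Finally, to transfer the bound to $\{\|y_i\|\}_i$ for Algorithm \ref{alg:simple-enh-vN}, I would invoke the equivalence of line 3 of Algorithms \ref{alg:simple-enh-vN} and \ref{alg:fn-bracketing} established before Theorem \ref{thm:interval-analysis}: after the distance-preserving rotation of Figure \ref{fig:epiLipschitzian}, the bracketing endpoints $-a_i$ and $b_i$ are exactly the $x$-coordinates of the two boundary points $\bar c_i,s_i$ of Theorem \ref{thm:bisec-enhanced-vN}, so $\{\bar c_i,s_i\}=\{(-a_i,f(-a_i)),(b_i,f(b_i))\}$ in these coordinates. By Theorem \ref{thm:bisec-enhanced-vN}, $y_i=P_{\conv(C_i)}(0)$ lies on the segment $(\bar c_i,s_i)$, so $\|y_i\|=d(0,\conv\{\bar c_i,s_i\})\le d(0,\{\bar c_i,s_i\})$, which is precisely \eqref{eq:min-formula}. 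Hence $\{\|y_i\|\}_i$ is dominated by the same linearly convergent sequence. I expect the only genuinely delicate point to be justifying the quadratic estimate $|f(\pm t)|\le C^\prime t^2$ cleanly from the one-sided second-derivative hypotheses \emph{without} presuming differentiability of $f$ on a full neighborhood of $0$; the remaining steps are bookkeeping with nonnegative quantities bounded by their sum.
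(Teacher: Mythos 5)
Your proof is correct and follows essentially the same route as the paper: the linear decay of the width $a_i+b_i$ from Theorem \ref{thm:lin-conv-1} controls the endpoints, a bound on $f$ at the endpoints controls the second coordinates, and $\|y_i\|$ is dominated by the distance to the nearer endpoint because $y_i$ lies on the segment joining them. The one difference is the step you flag as delicate: where you derive a quadratic bound $|f(\pm t)|\le C^{\prime}t^{2}$ from the one-sided second derivatives, the paper simply uses that a convex function is locally Lipschitz at $0$, so $f(-a_{i})\leq La_{i}$ and $f(b_{i})\leq Lb_{i}$, which already suffices and avoids that delicacy entirely.
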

\begin{proof}
By Theorem \ref{thm:lin-conv-1}, the width of the intervals $[a_{i},b_{i}]$
converges linearly to zero. Hence $\{\min(a_{i},b_{i})\}_{i}$ is
bounded by a linearly convergent sequence. Note that $f(\cdot)$,
being convex, is locally Lipschitz at $0$ with some constant $L$,
so $f(-a_{i})\leq La_{i}$ and $f(b_{i})\leq Lb_{i}$. We can thus
easily obtain the first conclusion.

The points $(0,0)$, $(-a_{i},f(-a_{i}))$ and $(b_{i},f(b_{i}))$
are points in the epigraph of $f(\cdot)$, and the formula in \eqref{eq:min-formula}
is an upper bound on the distance from $(0,0)$ to the line segment
connecting the points $(-a_{i},f(-a_{i}))$ and $(b_{i},f(b_{i}))$.
Hence the second statement is clear. 
\end{proof}
The assumptions of Theorem \ref{thm:lin-conv-1} correspond to a second
order property on the boundary of $S$ at $0$. With added structure,
Algorithm \ref{alg:fn-bracketing} and \ref{alg:simple-enh-vN} can
converge faster. For example, if $S$ is polyhedral and Algorithm
\ref{alg:simple-enh-vN} chooses the extreme points, we have finite
convergence of $y_{i}$ to $0$ because there are only finitely many
extreme points for a polyhedron. 
\begin{rem}
\label{rem:difficulties}(Difficulties in extending to $m>2$) For
much of this section and the last, we analyzed the case where $m=2$
in Algorithm \ref{alg:enhanced-vN}. We expect the extension to $m>2$
to be difficult, and the following are some of the reasons.
\begin{enumerate}
\item We made a connection to monotonicity of $\partial f(\cdot)$ here
and proved our results using single variable analysis. These need
to be extended to higher dimensions for $m>2$.
\item Proposition \ref{thm:bisec-enhanced-vN} cannot be easily extended
to the higher dimensional case. It is not necessarily true that for
the higher dimensional case, the projection will be on a face that
is of codimension 1.
\item For the 2 dimensional case, we see that $\aff(\{\bar{c}_{i},s_{i}\})\cap S$
is equal to $[\bar{c}_{i},s_{i}]$ if $0\notin[\bar{c}_{i},s_{i}]$.
One can see that if $S$ is a sphere in $\mathbb{R}^{3}$, for any
3 points $a$, $b$ and $c$ on $\partial S$, we do not have $\aff(\{a,b,c\})\cap S=\conv(\{a,b,c\})$. 
\item The projection onto the convex hull of two points is easy, and we
can write down an analytic formula to help in our analysis. However,
it is difficult to write down such a formula for the projection onto
the convex hull of 3 or more points in higher dimensions, even if
this projection can be solved quite effectively using the methods
discussed earlier. 
\end{enumerate}
\end{rem}

\section{Numerical experiments}

We perform some numerical experiments to show that Algorithm \ref{alg:enhanced-vN}
is more effective for some problem instances. 

We generate our random matrices $A\in\mathbb{R}^{30\times80000}$
using the following code segment in Matlab:\texttt{
\begin{eqnarray}
 &  & \mbox{A=rand(30,80000)-ones(30,80000)*0.315;}\label{eq:generate-A}\\
 &  & \mbox{for i=1:80000}\nonumber \\
 &  & \quad\mbox{A(:,i)=A(:,i)/norm(A(:,i));}\nonumber \\
 &  & \mbox{end}\nonumber 
\end{eqnarray}
}Through our experiments, we found that this choice of parameters
generate problem instances for which either \eqref{eq:von-Neumann-ineq}
is feasible (in which case the von Neumann algorithm cannot converge
finitely), or \eqref{eq:perceptron-ineq} is feasible but the von
Neumann algorithm typically takes many iterations, sometimes more
than 2000 iterations, before it terminates.

\subsection{Numerical experiment 1: Comparison against von Neumann algorithm
when $A^{T}y>0$ feasible}

We ran experiments for 491 different matrices $A\in\mathbb{R}^{30\times80000}$
generated by \eqref{eq:generate-A} such that \eqref{eq:perceptron-ineq}
holds (i.e., $0$ does not lie in the convex hull of the elements
generated by the columns of $A$). We calculated the number of iterations
needed for the von Neumann algorithm to find a $y$ satisfying \eqref{eq:perceptron-ineq},
and for Algorithm \ref{alg:enhanced-vN} with various limits on the
size of the active set (See Subsection \ref{sub:agg_strat}) to do
the same. The aggregation strategy is the one in Remark \ref{rem:agg_strat_2},
where we aggregate the oldest element(s) that have not been aggregated.
We set a limit of 2000 for the number of iterations.

We first look at the results obtained from the conducting experiments
on 491 different matrices $A\in\mathbb{R}^{30\times80000}$. We look
at Table \ref{tab:Alg-QP-vs-vNm} for a comparison of the number of
iterations needed by Algorithm \ref{alg:enhanced-vN} to find a $y$
such that $A^{T}y>0$ versus the number of iterations needed by the
von Neumann algorithm. We shall use the following convention in our
diagrams and tables in this section:
\begin{defn}
\label{def:A-N-notation-numerical}Let $A_{N}$ denote Algorithm \ref{alg:enhanced-vN}
where the size of the set $C_{i}$ is bounded above by $N-1$ after
line 12 is performed. In other words, we aggregate according to Remark
\ref{rem:agg_strat_2} when the size of the set $C_{i}$ equals $N$. 
\end{defn}
\begin{table}[!h]
\begin{tabular}{|c|r|r|r|r|r|r|r|r|}
\hline 
\multicolumn{9}{|c|}{Comparing iteration counts of Algorithm \ref{alg:enhanced-vN} against
von Neumann Algorithm}\tabularnewline
\hline 
$N$ & \multicolumn{2}{r|}{$A_{2}>A_{N}$} & \multicolumn{2}{r|}{$A_{2}<A_{N}$} & \multicolumn{2}{r|}{$A_{2}=A_{N}\leq2000$} & \multicolumn{2}{r|}{$A_{2}>2000$ and $A_{N}>2000$ }\tabularnewline
\hline 
\hline 
5 & 63 & 12.9\% & 260 & 53.0\% & 0 & 0\% & 168 & 34.2\%\tabularnewline
\hline 
10 & 170 & 34.6\% & 152 & 31.0\% & 4 & 0.8\% & 165 & 33.6\%\tabularnewline
\hline 
15 & 342 & 69.7\% & 18 & 3.7\% & 2 & 0.4\% & 129 & 26.3\%\tabularnewline
\hline 
20 & 409 & 83.3\% & 1 & 0.2\% & 0 & 0\% & 81 & 16.5\%\tabularnewline
\hline 
25 & 453 & 92.3\% & 0 & 0\% & 0 & 0\% & 38 & 7.7\%\tabularnewline
\hline 
31 & 491 & 100\% & 0 & 0\% & 0 & 0\% & 0 & 0\%\tabularnewline
\hline 
\end{tabular}\caption{\label{tab:Alg-QP-vs-vNm}Refer to the definition of $A_{N}$ in Definition
\ref{def:A-N-notation-numerical}. This table compares the number
of times in 491 experiments where the number of iterations to find
a $y$ s.t. $A^{T}y>0$ for the von Neumann algorithm (or $A_{2}$)
uses is greater than/ less than/ equal to that of $A_{N}$. The last
column represents the number of times both $A_{2}$ and $A_{N}$ reach
their limit of 2001 iterations.}
\end{table}

Recall that $A_{2}$ refers to the von Neumann algorithm (see Remark
\ref{rem:vN-particular-case}). It can be seen that the von Neumann
Algorithm has consistently used fewer iterations than $A_{5}$, and
it is quite competitive with $A_{10}$. As we increase the maximum
size of the active set, the number of iterations needed gets better
compared to the von Neumann algorithm $A_{2}$. 

\begin{figure}[!h]
\includegraphics[scale=0.5]{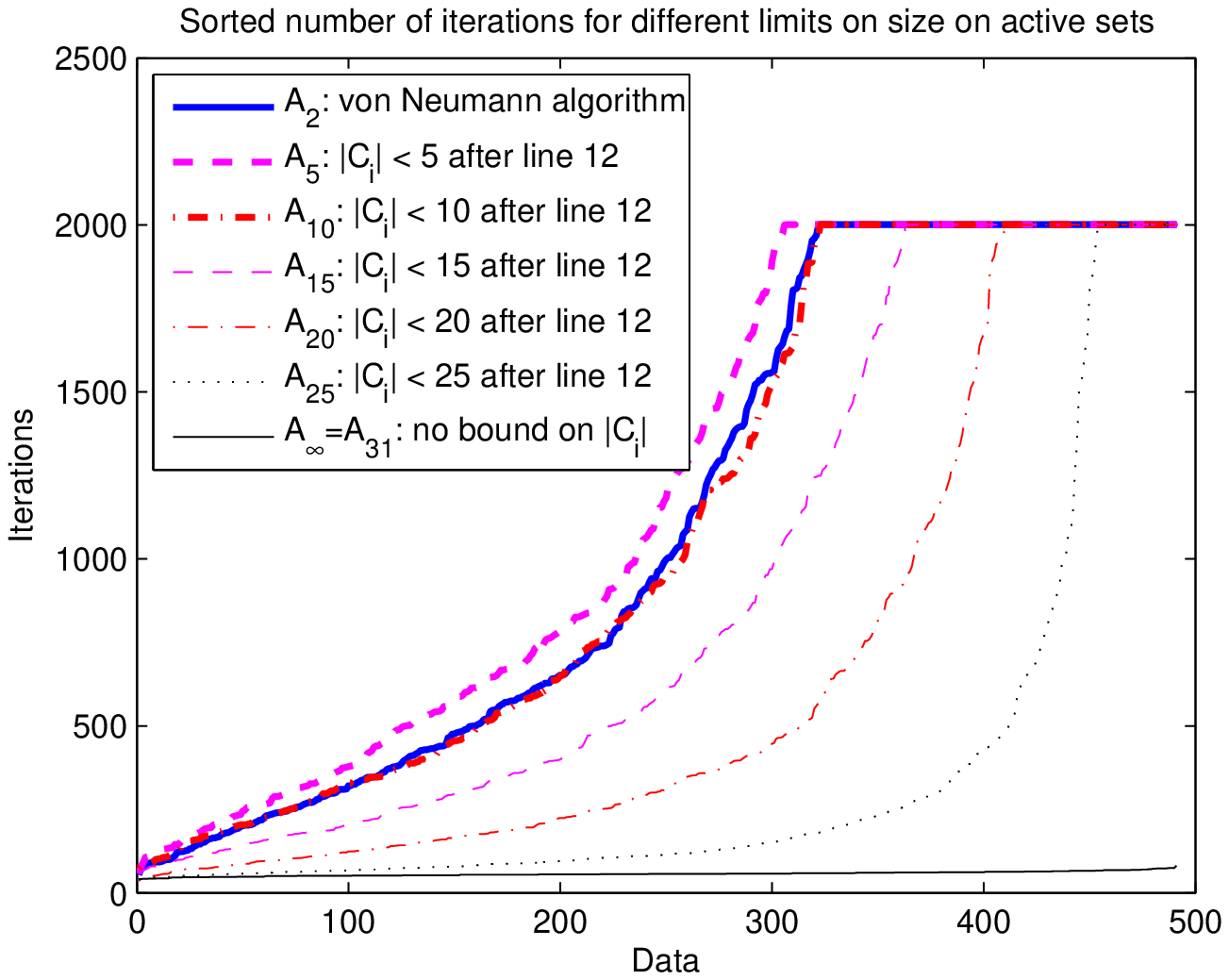}\includegraphics[scale=0.5]{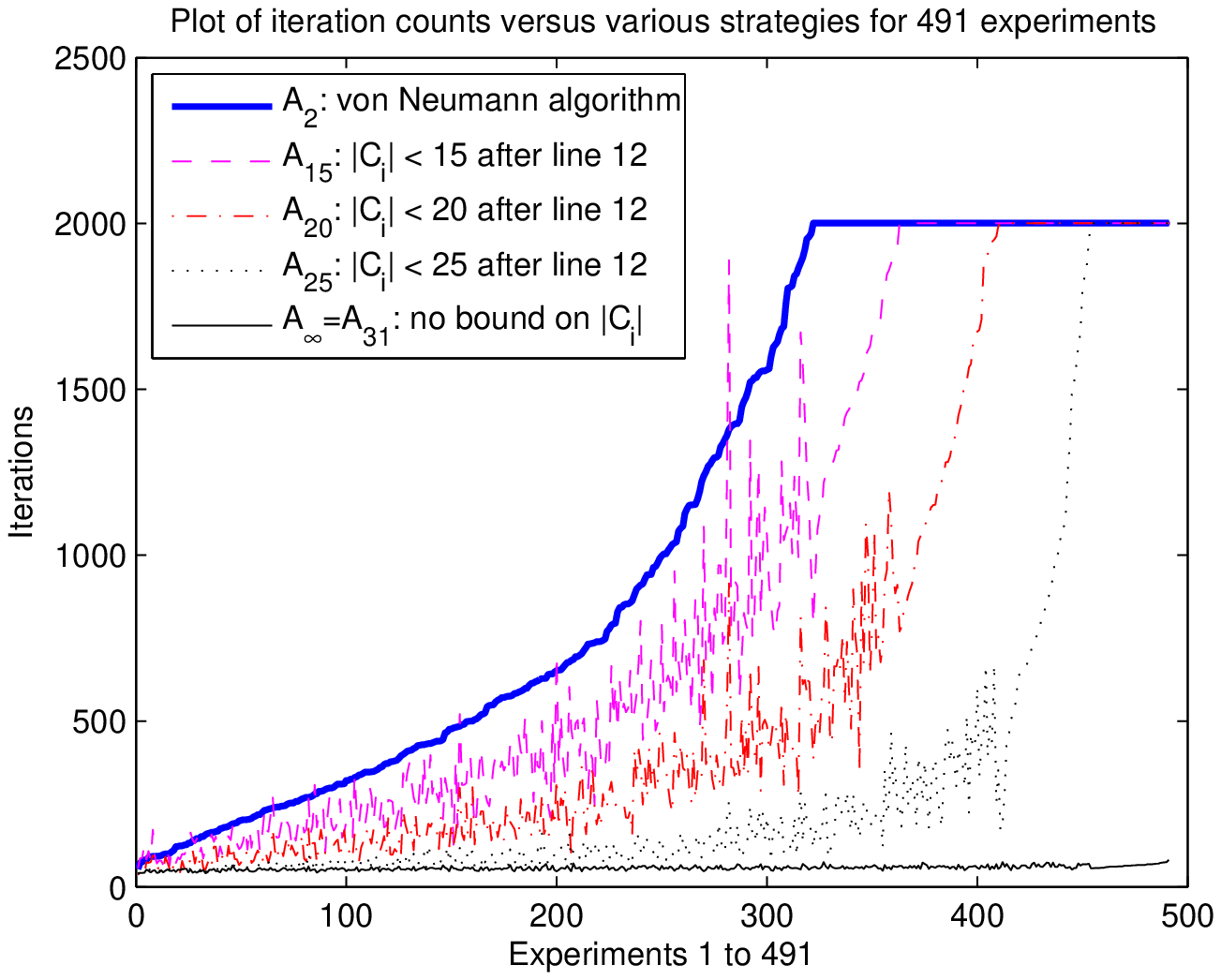}

\caption{\label{fig:exp-histogram}Number of iterations needed for Algorithm
\ref{alg:enhanced-vN} with various parameters to find a $y$ such
that $A^{T}y>0$. In the graph on the right, each vertical line corresponds
to a particular experiment. In the diagram on the left, the number
of iterations needed for all experiments are obtained and sorted.}
\end{figure}

We explain the diagrams in Figure \ref{fig:exp-histogram}, and we
first look at the diagram on the left. In our 491 experiments where
there is a $y$ such that $A^{T}y>0$, we found that overall, the
von Neumann algorithm $A_{2}$ uses fewer iterations to find the $y$
such that $A^{T}y>0$ than $A_{5}$. As we increase the tolerance
of the size of the set $C_{i}$ before we aggregate, the number of
iterations needed to find this $y$ decreases. 

In the diagram on the right of Figure \ref{fig:exp-histogram}, we
sort the experiments so that each vertical line corresponds to a particular
experiment. We see that the von Neumann algorithm usually takes more
iterations than Algorithm \ref{alg:enhanced-vN} than $A_{15}$, though
we notice a few rare instances of when $A_{15}$ takes more iterations
than the von Neumann algorithm. We observe the general pattern that
the larger the tolerance before aggregating $C_{i}$, the fewer iterations
it takes for Algorithm \ref{alg:enhanced-vN}. In fact, when there
is no aggregation, Algorithm \ref{alg:enhanced-vN} takes less than
80 iterations to decide whether \eqref{eq:perceptron-ineq} or \eqref{eq:von-Neumann-ineq}
is feasible.

We now look at a particular anomalous experiment, and explain diagrams
in Figure \ref{fig:exp-data-vNm-QP}. For this particular experiment,
we plot the norm $\|y_{i}\|$ with respect to the iteration $i$.
This example is unusual because the von Neumann algorithm takes fewer
iterations than $A_{20}$. The plots are drawn for each iteration
till we have found $y_{i}$ such that $A^{T}y_{i}>0$. Even though
the von Neumann algorithm takes fewer iterations to get a $y_{i}$
such that $A^{T}y_{i}>0$, the norms of $\|y_{i}\|$ decrease much
slower than all versions of Algorithm \ref{alg:enhanced-vN}. 

\begin{figure}[!h]
\includegraphics[scale=0.5]{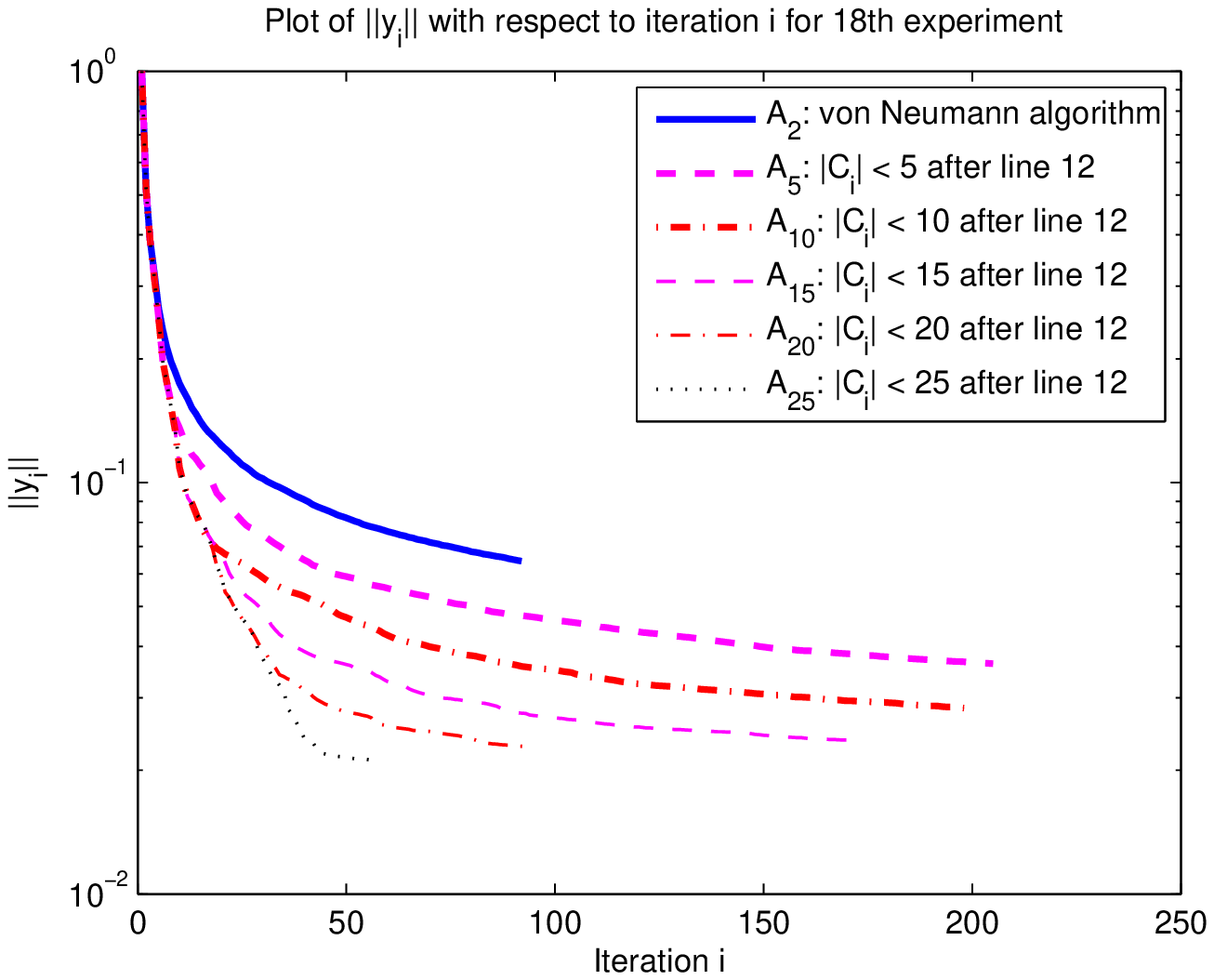}

\includegraphics[scale=0.5]{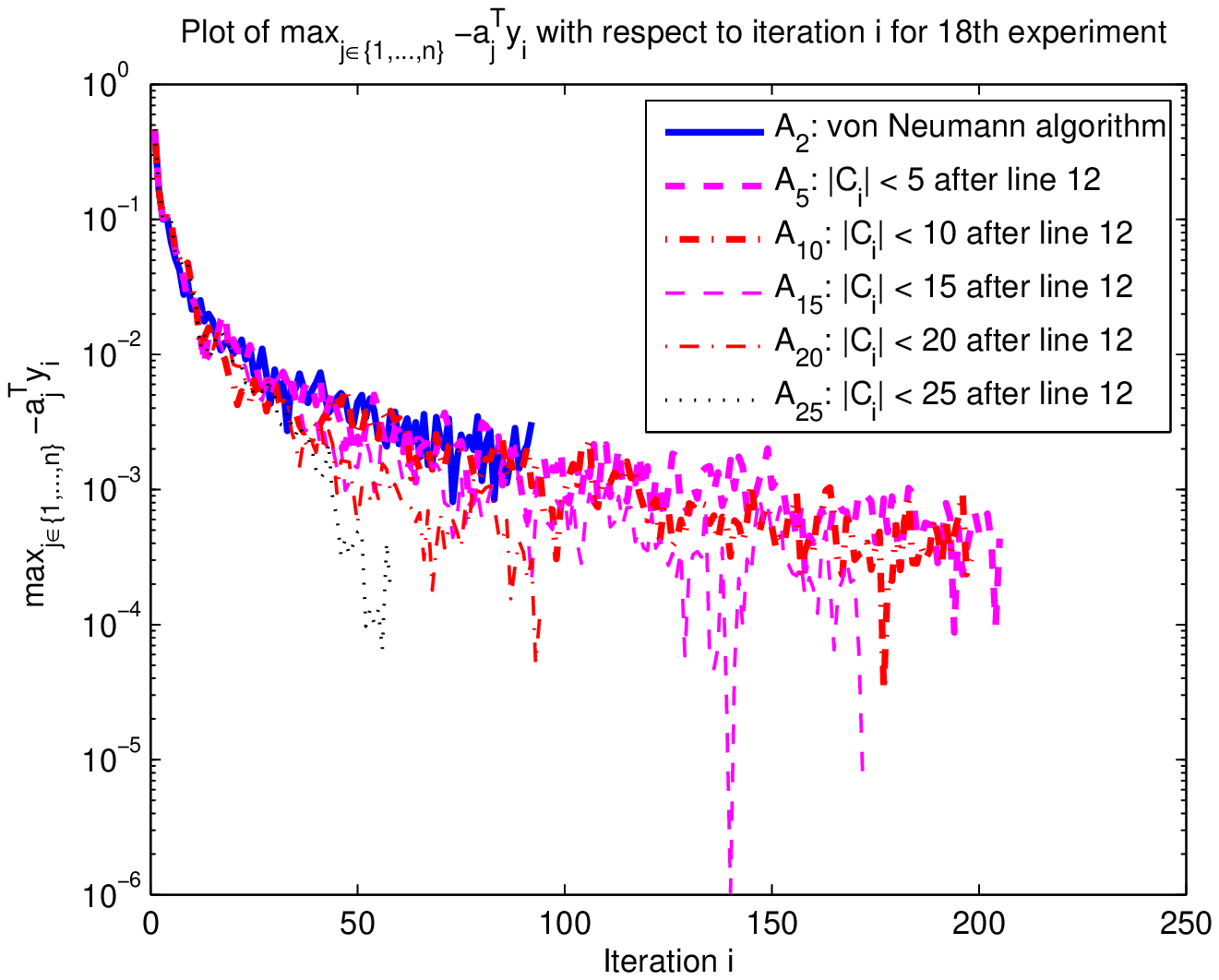}\includegraphics[scale=0.5]{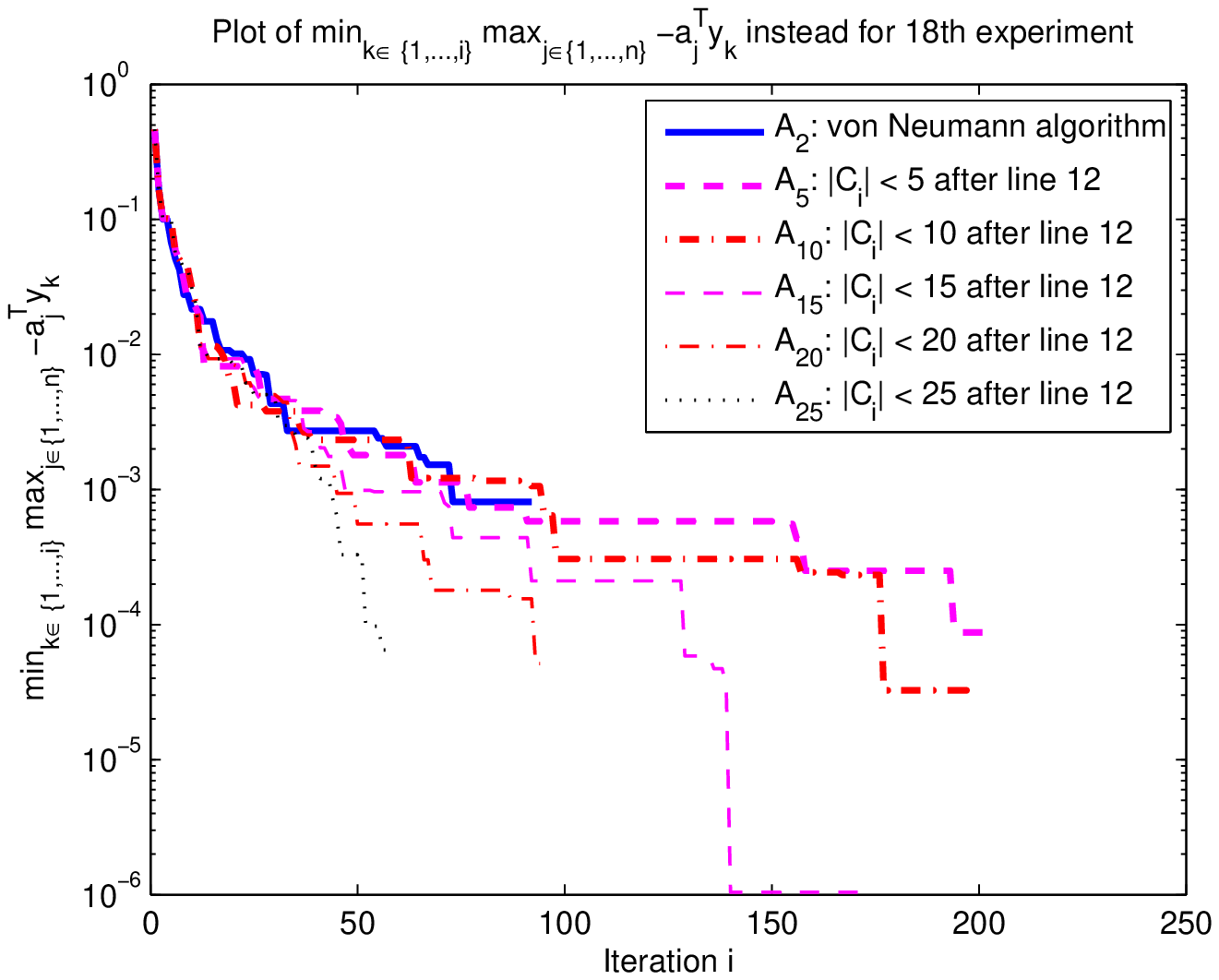}

\caption{\label{fig:exp-data-vNm-QP}The diagrams show an anomalous example
where the von Neumann algorithm takes fewer iterations to find a $y$
such that $A^{T}y>0$ than $A_{20}$.}
\end{figure}

We now explain the bottom diagrams in Figure \ref{fig:exp-data-vNm-QP}.
At each iteration $i$, we calculate 
\[
q_{1,i}:=\max_{j\in\{1,\dots,n\}}-a_{j}^{T}y_{i},
\]
just like in solving \eqref{eq:new-p-i-1-formula}. If this quantity
is negative, then $A^{T}y_{i}>0$, and we end. The bottom left diagram
shows that, other than a general downward trend, there is no clear
pattern in the dependence of $q_{1,i}$ on $i$. In the bottom right
diagram, we calculate 
\[
q_{2,i}:=\min_{k\in\{1,\dots,i\}}[\max_{j\in\{1,\dots,n\}}-a_{j}^{T}y_{i}].
\]
We observe that in general, the larger the limit the size of the active
set, the faster $q_{2,i}$ (and hence $q_{1,i}$) decreases.

\subsection{A note on number of iterations and time}

We have only discussed the performance of the algorithms we test in
terms of iteration counts instead of the time taken. For our experiments
so far, we plot the time taken per iteration versus the number of
iterations for our implementation of Algorithm \ref{alg:enhanced-vN}
as well as our implementation of the von Neumann algorithm in Matlab.
These are shown in Figure \ref{fig:time-per-iter}. The time taken
per iteration for Algorithm \ref{alg:enhanced-vN} is seen to be between
0.0255 seconds to 0.0290 seconds regardless of the size of $C_{i}$.
The time taken per iteration for the von Neumann algorithm is seen
to be between 0.0043 seconds to 0.0047 seconds. Since the running
time of the algorithms and the iteration numbers differ only up to
a constant factor that is implementation dependent, we shall analyze
our algorithms only in terms of the number of iterations. Moreover,
it may be possible to improve this ratio in favor of Algorithm \ref{alg:enhanced-vN}
if the accelerations in Remark \ref{rem:accelerate-primal-QP} are
carried out, especially when $m$ is large. 

\begin{figure}[!h]
\includegraphics[scale=0.5]{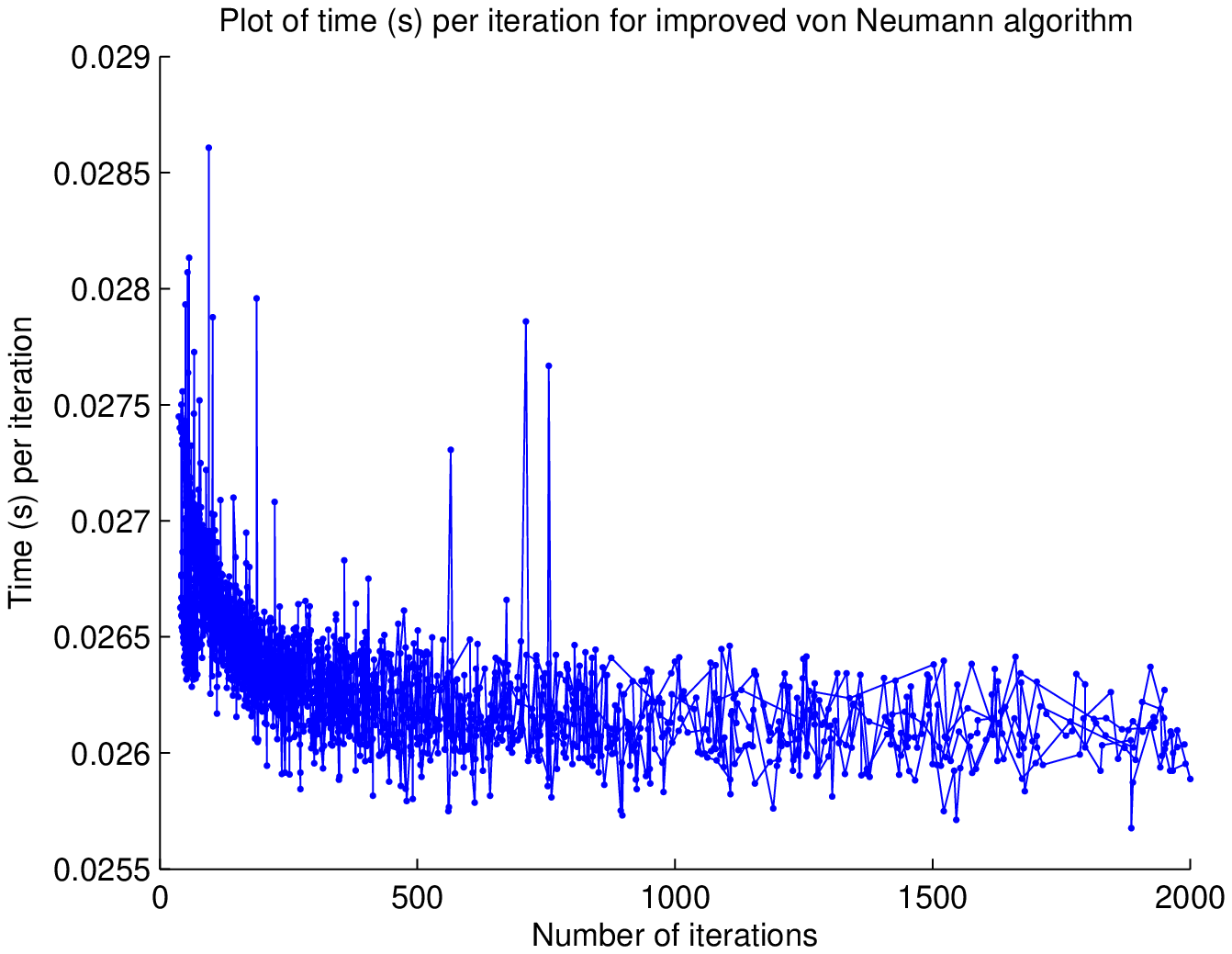}\includegraphics[scale=0.5]{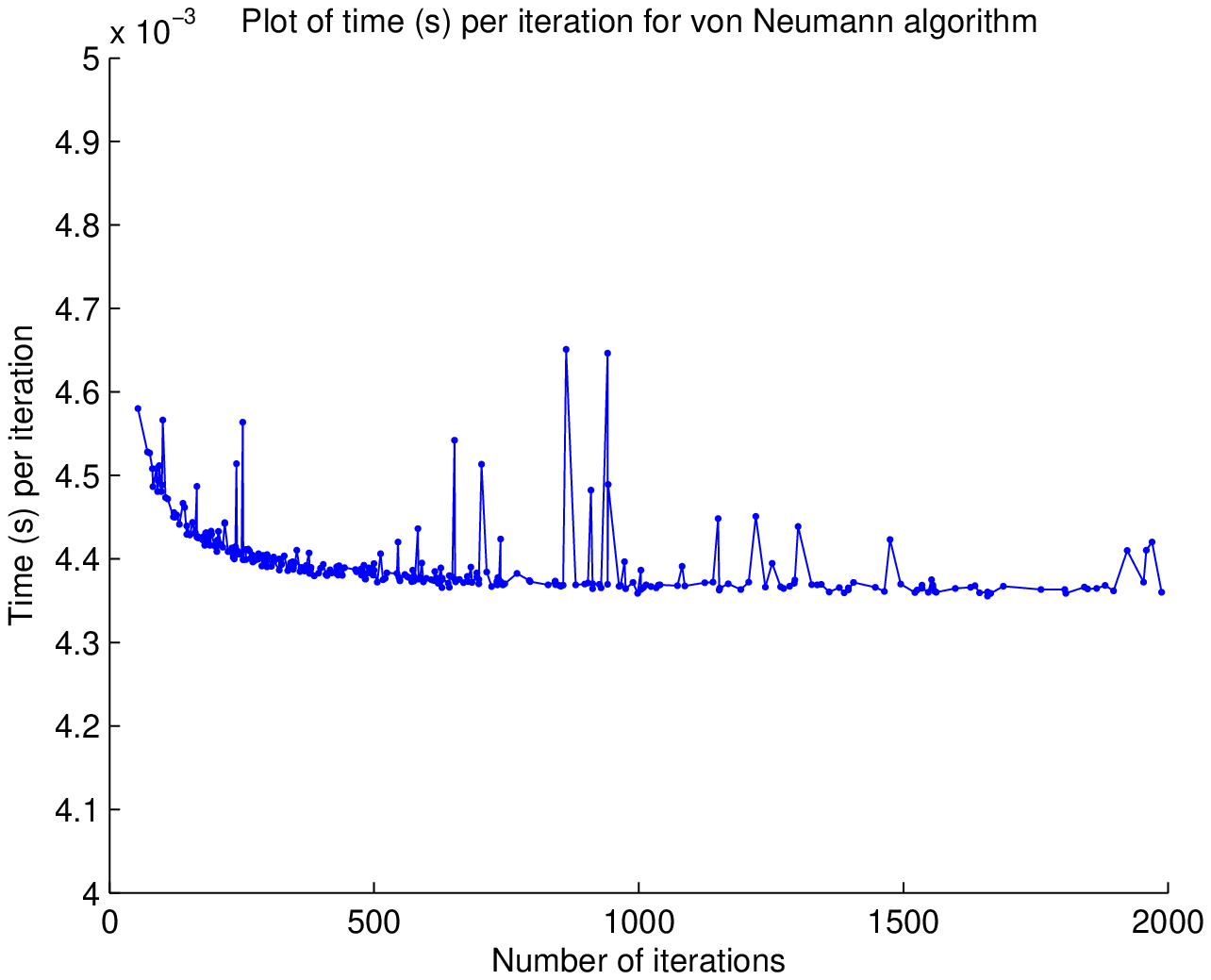}

\caption{\label{fig:time-per-iter}The diagram on the left shows the time taken
per iteration for $A_{5}$, $A_{10}$, $A_{15}$, $A_{20}$, $A_{25}$
and $A_{31}$. It can be seen that the size of the set $C_{i}$ does
not affect the time per iteration. The diagram on the right shows
the time taken per iteration for our implementation of the von Neumann
algorithm. }

\end{figure}

\subsection{\label{sub:agg_strats}Numerical experiment 2: Aggregation strategies}

Table \ref{tab:test_high_agg} below compares the running times of
353 experiments for the case when \eqref{eq:perceptron-ineq} is feasible.
Rows 1-3 look at the number of iterations it takes to find a certificate
vector $y$, while rows 4-6 look at the norms $\|Ax_{i}\|=\|y_{i}\|$
of the iterates at the $80$th iteration for the various aggregation
methods. For the test on the norms $\|Ax_{i}\|$, the undecided column
denotes the number of times at least one algorithm has found a $y$
such that $A^{T}y>0$ in $80$ iterations. The experiments suggest
that the best aggregation method is to aggregate the oldest elements
that have not been aggregated. There might be other factors that we
have not identified which determine the performance of an aggregation
strategy. There could also be better aggregation strategies other
than the ones we have tried.

\begin{table}[h]
\begin{tabular}{|c|c|c|c|c|c|c|}
\hline 
 & 353 runs for \eqref{eq:perceptron-ineq} feasible & \multicolumn{5}{c|}{Best aggregation method}\tabularnewline
\hline 
 &  & (1) & (2) & (3) & Ties & Undecided\tabularnewline
\hline 
\hline 
1 & No. iters for $A_{25}$ to find $y$ solving \eqref{eq:perceptron-ineq} & 178 & 39 & 102 & 34 & 0\tabularnewline
\hline 
2 & No. iters for $A_{20}$ to find $y$ solving \eqref{eq:perceptron-ineq} & 173 & 11 & 110 & 59 & 0\tabularnewline
\hline 
3 & No. iters for $A_{15}$ to find $y$ solving \eqref{eq:perceptron-ineq} & 155 & 10 & 101 & 87 & 0\tabularnewline
\hline 
4 & $\|Ax_{i}\|$ at $i=80$ for $A_{25}$ & 62 & 70 & 90 & 0 & 131\tabularnewline
\hline 
5 & $\|Ax_{i}\|$ at $i=80$ for $A_{20}$ & 142 & 67 & 96 & 0 & 48\tabularnewline
\hline 
6 & $\|Ax_{i}\|$ at $i=80$ for $A_{15}$ & 240 & 37 & 64 & 0 & 12\tabularnewline
\hline 
\hline 
 & 126 runs for \eqref{eq:von-Neumann-ineq} feasible &  &  &  &  & \tabularnewline
\hline 
\hline 
7 & $\|Ax_{i}\|$ at $i=400$ for $A_{15}$ & 114 & 10 & 0 & 0 & 2\tabularnewline
\hline 
8 & $\|Ax_{i}\|$ at $i=400$ for $A_{10}$ & 121 & 5 & 0 & 0 & 0\tabularnewline
\hline 
9 & $\|Ax_{i}\|$ at $i=400$ for $A_{5}$ & 123 & 2 & 1 & 0 & 0\tabularnewline
\hline 
\end{tabular}

\caption{\label{tab:test_high_agg}Experiments on which aggregation strategy
is best among those presented in Remark \ref{rem:agg_strat_2}. The
strategies in the three columns are: (1) oldest aggregated, (2) lowest
coefficient aggregated, and (3) highest coefficient aggregated, as
according to the description in Remark \ref{rem:agg_strat_2}, which
relies on Remark \ref{rem:agg_strat}. For rows 1-6, the 353 experiments
are for when \eqref{eq:perceptron-ineq} is feasible. For rows 7-9,
the 126 experiments are for when \eqref{eq:von-Neumann-ineq} is feasible.
We refer to Subsection \ref{sub:agg_strats} for more details.}
\end{table}

Table \ref{tab:test_high_agg} also compares the running times of
126 experiments for which \eqref{eq:von-Neumann-ineq} is feasible.
When \eqref{eq:von-Neumann-ineq} is feasible, we want to find iterates
$x$ such that $\|Ax\|$ is small. To evaluate the performance of
the aggregation strategies, we look at how the norms of the values
$\|Ax_{i}\|$ vary with the iteration count $i$ for different strategies.
For the test on the norms $\|Ax_{i}\|$, the undecided column denotes
the number of times numerical errors resulting from $\|Ax_{i}\|$
were encountered for at least one algorithm in $400$ iterations.
It is quite clear that the strategy of aggregating the oldest point
obtained is the best strategy among our experiments.

\begin{figure}

\includegraphics[scale=0.5]{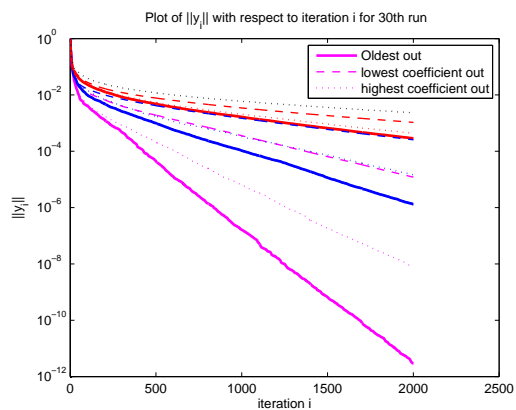}\caption{Plot of an experiment for which \eqref{eq:von-Neumann-ineq} is feasible.
The black dotted plot is for $A_{2}$ (the von Neumann algorithm).
The red plots are for the three versions of aggregation strategies
in Remark \ref{rem:agg_strat_2} for $A_{5}$. The blue and magenta
plots are for $A_{10}$ and $A_{15}$ respectively. }
\end{figure}

\section{Conclusion}

We introduced an improvement of the distance reduction step in the
generalized von Neumann algorithm in Algorithm \ref{alg:enhanced-vN}
by projecting onto the convex hull of a set of points $C_{i}$ using
a primal active set QP algorithm. The size of $C_{i}$, $|C_{i}|$,
can be chosen to be as large as possible, as long as each iteration
is manageable. If $|C_{i}|$ is increased, the cost of each iteration
increases, but we expect better iterates when \eqref{eq:von-Neumann-ineq-1}
is feasible. This is verified by our numerical experiments. When \eqref{eq:perceptron-ineq-1}
is feasible, we can find a solution to \eqref{eq:perceptron-ineq-1}
if $|C_{i}|$ is relatively large. But interestingly, if $|C_{i}|$
is small but bigger than $2$, the performance can be poorer than
von Neumann's algorithm on average.

On the theoretical side, Theorem \ref{thm:fin-conv-1} studies the
behavior of Algorithm \ref{alg:enhanced-vN} when $0\in\intr(S)$
and the rest of Sections \ref{sec:basic-analysis} and \ref{sec:More-analysis}
study the behavior of Algorithm \ref{alg:enhanced-vN} when $0\in\partial S$
and $A\in\mathbb{R}^{2\times n}$. A natural follow up question that
better models how Algorithm \ref{alg:enhanced-vN} can be used in
practice is to study what happens when $|C_{i}|$ is of moderate size.
It appears hard to prove such results because there is no easy formula
for the projection onto $\conv(C_{i})$ when $|C_{i}|>2$. Remark
\ref{rem:difficulties} also shows the difficulties for extending
our results to the case when $0\in\partial S$ and $A\in\mathbb{R}^{m\times n}$
for $m>2$. Nevertheless, the results here can give an idea of what
can be expected to be true in higher dimensions.
\begin{acknowledgement*}
We thank Marina Epelman for organizing Freundfest honoring Robert
M. Freund's 60th birthday, where Javier Pe\~{n}a talked on how Rob
Freund's contributions in the perceptron and von Neumann algorithms
influenced his recent work. We also thank Javier Pe\~{n}a for further
conversations.
\end{acknowledgement*}
\bibliographystyle{amsalpha}
\bibliography{../refs}

\providecommand{\bysame}{\leavevmode\hbox to3em{\hrulefill}\thinspace}
\providecommand{\MR}{\relax\ifhmode\unskip\space\fi MR }
\providecommand{\MRhref}[2]{%
  \href{http://www.ams.org/mathscinet-getitem?mr=#1}{#2}
}
\providecommand{\href}[2]{#2}
\begin{thebibliography}{Dan92b}

\bibitem[BB96]{BB96_survey}
H.H. Bauschke and J.M. Borwein, \emph{On projection algorithms for solving
  convex feasibility problems}, SIAM Rev. \textbf{38} (1996), 367--426.

\bibitem[BCK06]{BausCombKruk06}
H.H. Bauschke, P.L. Combettes, and S.G. Kruk, \emph{Extrapolation algorithm for
  affine-convex feasibility problems}, Numer. Algorithms \textbf{41} (2006),
  239--274.

\bibitem[BFV09]{BelloniFreundVempala09_MOR}
A.~Belloni, R.M. Freund, and S.~Vempala, \emph{An efficient rescaled perceptron
  algorithm for conic systems}, Math. Oper. Res. \textbf{34} (2009), no.~3,
  621--641.

\bibitem[Blo62]{Block62}
H.D. Block, \emph{The perceptron: a model for brain functioning}, Rev. Mod.
  Phys. \textbf{34} (1962), 123--135.

\bibitem[BZ05]{BZ05}
J.M. Borwein and Q.J. Zhu, \emph{Techniques of variational analysis}, Springer,
  NY, 2005, CMS Books in Mathematics.

\bibitem[CC01]{CheungCucker01_MP}
D.~Cheung and F.~Cucker, \emph{A new condition number for linear programming},
  Math. Program. \textbf{91} (2001), 163--174.

\bibitem[Cla83]{Cla83}
F.H. Clarke, \emph{Optimization and nonsmooth analysis}, Wiley, Philadelphia,
  1983, Republished as a SIAM Classic in Applied Mathematics, 1990.

\bibitem[Dan92a]{Dantzig92}
G.B. Dantzig, \emph{Bracketing to speed convergence illustrated on the von
  {N}eumann algorithm for finding a feasible solution to a linear program with
  a convexity constraint}, Technical report SOL \textbf{92} (1992), no.~6.

\bibitem[Dan92b]{Dantzig92_conv}
\bysame, \emph{An $\epsilon$-precise feasible solution to a linear program with
  a convexity constraint in 1/$\epsilon^2$ iterations independent of problem
  size}, Technical Report. Stanford University (1992).

\bibitem[DV06]{DunaganVempala06_MP}
J.~Dunagan and S.~Vempala, \emph{A simple polynomial-time rescaling algorithm
  for solving linear programs}, Math. Program. \textbf{114} (2006), no.~1,
  101--114.

\bibitem[EF00]{EpelmanFreund00}
M.~Epelman and R.~M. Freund, \emph{Condition number complexity of an elementary
  algorithm for computing a reliable solution of a conic linear system}, Math.
  Program. \textbf{88} (2000), 451--485.

\bibitem[EF02]{EpelmanFreund02}
\bysame, \emph{A new condition measure, preconditioners, and relations between
  different measures of conditioning for conic linear systems}, SIAM J. Optim.
  \textbf{12} (2002), no.~3, 627--655.

\bibitem[ER11]{EsRa11}
R.~Escalante and M.~Raydan, \emph{Alternating projection methods}, SIAM, 2011.

\bibitem[Fuk82]{Fukushima82}
M.~Fukushima, \emph{A finitely convergent algorithm for convex inequalities},
  IEEE Trans. Automat. Control \textbf{27} (1982), no.~5, 1126--1127.

\bibitem[FV99]{FreundVera99}
R.M. Freund and J.~Vera, \emph{Condition-based complexity of convex
  optimization in conic linear form via the ellipsoid algorithm}, SIAM J.
  Optim. \textbf{10} (1999), 155--176.

\bibitem[GI83]{Goldfarb_Idnani}
D.~Goldfarb and A.~Idnani, \emph{A numerically stable dual method for solving
  strictly convex quadratic programs}, Math. Programming \textbf{27} (1983),
  1--33.

\bibitem[GP98]{G-P98}
U.M. Garc{\'i}a-Palomares, \emph{A superlinearly convergent projection
  algorithm for solving the convex inequality problem}, Oper. Res. Lett.
  \textbf{22} (1998), 97--103.

\bibitem[GP01]{G-P01}
\bysame, \emph{Superlinear rate of convergence and optimal acceleration schemes
  in the solution of convex inequality problems}, Inherently Parallel
  Algorithms in Feasibility and Optimization and their Applications
  (D.~Butnariu, Y.~Censor, and S.~Reich, eds.), Elsevier, 2001, pp.~297--305.

\bibitem[Mor06]{Mor06}
B.S. Mordukhovich, \emph{Variational analysis and generalized differentiation
  {I} and {II}}, Springer, Berlin, 2006, Grundlehren der mathematischen
  Wissenschaften, Vols 330 and 331.

\bibitem[Nes05]{Nesterov05_SIOPT}
Y.~Nesterov, \emph{Excessive gap technique in nonsmooth convex minimization},
  SIAM J. Optim. \textbf{16} (2005), 235--249.

\bibitem[Nov62]{Novikoff62}
A.B.J. Novikoff, \emph{On convergence proofs on perceptrons}, Proceedings of
  the Symposium on the Mathematical Theory of Automata, vol. XII, 1962,
  pp.~615--622.

\bibitem[NW06]{NW06}
J.~Nocedal and S.J. Wright, \emph{Numerical optimization}, 2 ed., Springer,
  2006.

\bibitem[Pan13]{SHDQP}
C.H.J. Pang, \emph{{SHDQP}: An algorithm for convex set intersection problems
  based on supporting hyperplanes and dual quadratic programming}, ArXiv
  e-prints (2013).

\bibitem[Pan14a]{improved_SIP}
\bysame, \emph{Improved analysis of algorithms based on supporting halfspaces
  and quadratic programming for the convex intersection and feasibility
  problems}, (preprint) (2014).

\bibitem[Pan14b]{cut_Pang12}
\bysame, \emph{Set intersection problems: Supporting hyperplanes and quadratic
  programming}, Math. Programming (Online first) (2014).

\bibitem[Pie84]{Pierra84}
G.~Pierra, \emph{Decomposition through formalization in a product space}, Math.
  Programming \textbf{28} (1984), 96--115.

\bibitem[PR00]{PenaRenegar00}
J.~Pe{\~{n}}a and J.~Renegar, \emph{Computing approximate solutions for convex
  conic systems of con- straints}, Math. Program. \textbf{87} (2000), 351--383.

\bibitem[Ren95a]{Renegar_SIOPT95}
J.~Renegar, \emph{Incorporating condition measures into the complexity theory
  of linear programming}, SIAM J. Optim. \textbf{5} (1995), 506--524.

\bibitem[Ren95b]{Renegar_MP95}
\bysame, \emph{Linear programming, complexity theory and elementary functional
  analysis}, Math. Program. \textbf{70} (1995), 279--351.

\bibitem[Roc70]{Rockafellar70}
R.T. Rockafellar, \emph{Convex analysis}, Princeton, 1970.

\bibitem[Roc79]{Rockafellar79_epiL}
\bysame, \emph{Directionally {L}ipschitzian functions and subdifferential
  calculus}, Proceedings of the London Math. Soc. \textbf{3} (1979), 145--154.

\bibitem[Ros58]{Rosenblatt58}
F.~Rosenblatt, \emph{The perceptron: A probabilistic model for information
  storage and organization in the brain}, Psych. Rev. \textbf{65} (1958),
  386--408.

\bibitem[Ruj93]{Rujan93}
P.~Ruj\'{a}n, \emph{A fast method for calculating the perceptron with maximal
  stability}, J. Physics I France \textbf{3} (1993), 277--290.

\bibitem[RW98]{RW98}
R.T. Rockafellar and R.J.-B. Wets, \emph{Variational analysis}, Grundlehren der
  mathematischen Wissenschaften, vol. 317, Springer, Berlin, 1998.

\bibitem[Sch06]{Scheinberg06}
K.~Scheinberg, \emph{An efficient implementation of an active set method for
  {SVMs}}, J. Machine Learning Research \textbf{7} (2006), 2237--2257.

\bibitem[SP12]{SoheiliPena12}
N.~Soheili and J.~Pe{\~{n}}a, \emph{A smooth perceptron algorithm}, SIAM J.
  Optim. \textbf{22} (2012), no.~2, 728--737.

\bibitem[SP13]{SoheiliPena13}
\bysame, \emph{A primal-dual smooth perceptron-von {N}eumann algorithm},
  Discrete Geometry and Optimization (K.~Bezdek et~al., ed.), vol.~69, 2013,
  pp.~303--320.

\end{thebibliography}

\end{document}